\documentclass[11pt,a4paper,twoside]{article}
\usepackage{amsthm,amssymb,amsfonts,amsmath}

\newtheorem{theorem}{Theorem}
\newtheorem{example}{Example}%
\newtheorem{corollary}{Corollary}%
\newtheorem{Proposition}{Proposition}%
\newtheorem{Lemma}{Lemma}%

\newtheorem{definition}{Definition}%

\def\Ric{\operatorname{Ric}}
\newcommand\oc{\overset{\centerdot}}

\title{Characterization of Sasakian Manifolds by Curvature Conditions}

\author{
     Sourav Nayak\footnote{Department of Mathematics, Indian Institute of Technology - Hyderabad, Sangareddy-502285, India
     \newline e-mail: {\tt ma22resch11004@iith.ac.in},
     Orcid: 0009-0003-4330-8283}\ ,
     Dhriti Sundar Patra\footnote{Department of Mathematics, Indian Institute of Technology - Hyderabad, Sangareddy-502285, India
     \newline e-mail: {\tt dhriti@math.iith.ac.in} \ and \ {\tt dhritimath@gmail.com} }
        \ and
     Vladimir Rovenski \footnote{Department of Mathematics, University of Haifa, Mount Carmel, 3498838 Haifa, Israel
     \newline e-mail: {\tt vrovenski@univ.haifa.ac.il},
     Orcid: 0000-0003-0591-8307
     }
}

\date{}

\begin{document}

\maketitle

\begin{abstract}
Sasakian manifolds, a subclass of contact metric manifolds, play a central role in modern differential geometry and mathematical physics, where they arise naturally in geometric mechanics, CR geometry, and supersymmetric field theories.
The weak contact metric (w.c.m.) structure
generalizes the contact metric structure and provides a broader framework for studying contact geometry and its applications.
A fundamental problem is to understand how curvature conditions constrain the underlying contact metric structure and, in particular, how they distinguish Sasakian geometry from its weaker variants.
Using the partial Ricci flow,
we
characterize Sasakian structure (among w.c.m.~manifolds) under 
the $(\kappa,\mu)$-nullity condition related to curvature.
For $\kappa<1$ we find conditions under which a w.c.m.~mani\-fold admits a bi-Legendrian structure, and for $\kappa=\mu=0$ establish splitting and classification~results.

\vskip1.5mm\noindent
\textbf{Keywords}:
Weak contact metric structure,
$K$-contact structure,
Sasakian structure,
$(\kappa, \mu)$-nullity condition,
partial Ricci flow,
Legendrian foliation.

\vskip1.5mm
\noindent
\textbf{Mathematics Subject Classifications (2010)} 53C15, 53C25, 53D10
\end{abstract}

\maketitle

\section{Introduction and Main Results} \label{sec:intro}

Contact Riemannian geometry is a significant field in mathematics and mathematical physics~\cite{blair2010riemannian}.
It considers a smooth manifold $M$ of dimension $2n + 1$ equipped with a contact metric structure $(\varphi,\xi,\eta,g)$.
Here, $g$ is a Riemannian metric,
$\varphi$ is a skew-symmetric (1,1)-tensor of rank $2n$, $\xi$ is a unit (Reeb) vector field, and $\eta$ is a 1-form
such that $\eta(\xi) = 1$. These tensors satisfy the~relations
\[
 \varphi^2=-I+\eta\otimes\xi,\qquad
 g(\varphi X, \varphi Y) = g(X, Y) - \eta(X)\eta(Y),\qquad
 d\eta(X,Y)=g(X,\varphi Y),
\]
for all $X, Y \in \mathfrak{X}_M$, where
$I$ is the identity operator and $\mathfrak{X}_M$ is the space of smooth vector fields on~$M$.
In this case, $\eta$ is a contact 1-form:
$\eta\wedge(d\eta)^n\ne0$,
and
the $\xi$-trajectories are geodesics.
A~{$K$}-contact manifold is a special case of a contact metric manifold, whose Reeb vector field ${\xi}$ is Killing (an infinitesimal isometry).
{Sasakian manifolds} (a subclass of $K$-contact manifolds)
occupy a central position at the crossroads of Riemannian, contact, and complex geometry,
and are characterized
among contact metric manifolds by
the following
equality, e.g., Theorem~6.3
in~\cite{blair2010riemannian}:
\begin{align}\label{E-1k-0-nullity}
R_{X,Y}\,\xi &=
\eta(X)\varphi^2Y -\eta(Y)\varphi^2X ,
\end{align}
where
$R_{X,Y}=[\nabla_X, \nabla_Y] -\nabla_{[X,Y]}$ is the curvature tensor
and $\nabla$ is the Levi-Civita connection.
Neither Sasakian nor $K$-contact manifolds satisfy the following condition:
\begin{align}\label{E-RXY-xi}
 R_{X,Y}\,\xi = 0 ,
\end{align}
-- $\xi$ belongs to the nullity distribution.
In fact, any $(2n+1)$-dimensional contact metric manifold with $n>1$ and \eqref{E-RXY-xi} is locally isometric to the product of a flat Riemannian
space and a round $n$-sphere, see \cite[Theorem~7.5]{blair2010riemannian};
thus, for $n=1$, such manifolds are flat.
Generalizing \eqref{E-1k-0-nullity} and \eqref{E-RXY-xi}, $(\kappa,\mu)$-manifolds (the  centerpiece of our paper) were introduced as those contact metric manifolds for which $\xi$ belongs to the $(\kappa,\mu)$-nullity distribution, where $\kappa,\mu\in\mathbb{R}$ and
$h=\tfrac12\,\mathcal{L}_{\xi}\,\varphi$ given in~\eqref{Eq-4.1}:
\begin{align}\label{E-k-mu-nullity}
 R_{X,Y}\,\xi = \kappa\{\eta(X)\varphi^2Y
 -\eta(Y)\varphi^2X\}+\mu\{\eta(Y) h X-\eta(X) h Y\},
\end{align}
see \cite[Section~7.3]{blair2010riemannian}.
Koufogiorgos et.~al
\cite{K2008} defined $(\kappa,\mu,\nu)$-manifolds and showed~that for dimensions $>3$ such a manifold is a $(\kappa,\mu)$-manifold.
The motivation for studying these manifolds is that they naturally generalize Sasakian manifolds, admit a bi-Legendrian structure, and in the non-Sasakian case the $(\kappa,\mu)$-nullity condition determines the curvature completely, e.g., \cite{Boeckx2000,GF2021,CMdiT-2008}.

\smallskip

Rovenski et al.
\cite{rov-119,rov-RS23,RWo-2} presented weak metric structures on manifolds that extend the concept of almost Hermitian structure and almost contact metric structure. The weak contact metric (w.c.m.) structure, where the complex structure on the contact distribution is replaced by a non-degenerate skew-symmetric tensor, see Definition~\ref{D-02}, allowed for a fresh perspective on traditional theory and the discovery of applications in geometry and physics.
For example, weak $K$-contact manifolds have positive $\xi$-sectional curvature; hence, they form a larger class than $K$-contact manifolds, for which this curvature is equal to 1.
In \cite{rov-RS23}, conditions are given for a weak $K$-contact manifold equipped with a generalized Ricci soliton structure to be an Einstein~manifold.
In~\cite{rov-119}, it was shown that the Sasakian structure is rigid in a sense:
a w.c.m. manifold is weak Sasakian if and only if it is a Sasakian manifold.
Advances in geometry and modern physics
(such as
non-commutative geometry and superspace, string theory and gravity, and the asymmetric energy-momentum tensor)
have made the non-symmetric metric tensor
$G=g+F$, where $g$ is a pseudo-Riemannian metric (associated with gravity), and $F\ne0$ is a skew-symmetric tensor (associated with electromagnetism), more attractive than ever.
Weak metric structures are fully consistent with the skew-symmetric part
$F(X,Y)=g(X,\varphi Y)$
of the tensor $G$; and provide new tools for studying
non-symmetric Riemannian manifolds $(M,G)$, see~\cite{RZ-1}.

\smallskip

In the paper, we study the questions:
\textit{Which weak $(\kappa,\mu)$-manifolds
admit Sasakian structure or have conjugate Legendrian foliations}?
\textit{Which w.c.m. manifolds with
the curvature-related property
\eqref{E-RXY-xi} split}?
The aim of this paper is to provide a clear curvature-based mechanism that distinguishes Sasakian geometry within the broader class of w.c.m. structures.
Our analysis
is carried out under the assumptions, which are trivially satisfied by contact metric~manifolds:
\begin{align}\label{Eq-2assump}
 \mathcal{L}_{\xi}\,{Q}&=0 , \\
\label{E-nS-10}
 ((\nabla_X\,Q)Y)^\top&=0\qquad (X,Y\in\ker\eta) ,
\end{align}
where $^\top$ denotes the $\ker\eta$\,-component of a vector,
and $\mathcal{L}$ is the Lie derivative.
 The following theorem shows that a w.c.m. manifold admits a Sasakian structure obtained as the 
limit under the partial Ricci flow, see Section~\ref{sec:02.2}, of w.c.m.\,structures
with $\ker\eta$-adapted metrics satisfying \eqref{E-1k-0-nullity}.

\begin{theorem}\label{P-K-S}
Let a w.c.m.\, manifold $M^{2n+1}(\varphi_0,Q_0,\xi,\eta,g_0)$ with 
the
property \eqref{E-1k-0-nullity}
satisfy the condition \eqref{Eq-2assump}.
Then $(\varphi_0,Q_0,\xi,\eta,g_0)$ is a weak $K$-contact structure with $\mathcal{N}^{(5)}=0$, and there exists a smooth family $(\varphi_t,Q_{\,t},\xi,\eta,g_t)\ (t\in\mathbb{R})$
of weak $K$-contact structures on $M$ with  $\ker\eta$-adapted metrics $g_t$ (i.e., $g_t$ changes on $\ker\eta$~only) satisfying  \eqref{E-1k-0-nullity}, that converges exponentially fast, as $t\to-\infty$, to a Sasakian structure.
\end{theorem}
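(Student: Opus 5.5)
We first fix the setting. For a w.c.m.\ structure, $\varphi^2=-Q+\eta\otimes\xi$ with $Q$ positive definite, self-adjoint, $Q\xi=\xi$. The metric satisfies $g(\varphi X,\varphi Y)=g(X,QY)-\eta(X)\eta(Y)$, and $d\eta(X,Y)=g(X,\varphi Y)$. The classical case is $Q=I$.

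\emph{Step 1: \eqref{E-1k-0-nullity} forces the weak $K$-contact property.} Recall the w.c.m.\ identities from the earlier sections, expressing $\nabla\xi=-\varphi-\varphi h$ and $R_{\xi,X}\xi$ in terms of $h$, $\nabla_\xi h$, $\varphi$ and the tensor $\mathcal N^{(5)}$; under \eqref{Eq-2assump} the extra $Q$-terms simplify. Setting $Y=\xi$ in \eqref{E-1k-0-nullity} gives $R_{X,\xi}\xi=-\varphi^2X$ for $X\in\ker\eta$. Tracing the analogue of the classical formula $\Ric(\xi,\xi)=2n-\operatorname{tr}h^2$ then gives $\operatorname{tr}(h^2)=0$, up to terms in $\operatorname{tr}Q$ that match the trace of $-\varphi^2$. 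Since $h$ is self-adjoint, $h=0$. Then $\xi$ is Killing, i.e.\ the structure is weak $K$-contact.

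Using $\nabla\xi=-\varphi$ and the general expression of $\mathcal N^{(5)}$ via $\mathcal L_\xi\varphi$ and $\mathcal L_\xi Q$, assumption \eqref{Eq-2assump} gives $\mathcal N^{(5)}=0$. We may need to use \eqref{E-1k-0-nullity} again, differentiated: $(\nabla_X\varphi)Y$ paired with $\xi$ equals $R_{\xi,X}Y$. This shows that $(\nabla_X\varphi)Y=g(QX,Y)\xi-\eta(Y)QX$ holds at least in its $\xi$-component, which is what $\mathcal N^{(5)}$ measures.

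\emph{Step 2: the flow.} We apply the partial Ricci flow of Section~\ref{sec:02.2} restricted to $\ker\eta$. The restriction $\varphi|_{\ker\eta}$ is nondegenerate, and by Step 1 it is $\xi$-invariant. We take $\ker\eta$-adapted metrics of the form
\[
g_t(X,Y)=g_0\bigl(X,\,\psi_t Y\bigr)\quad (X,Y\in\ker\eta),\qquad g_t(\xi,\cdot)=\eta .
\]
Here $\psi_t$ is built from $Q_0$, e.g.\ by choosing $g_t$ so that $Q_t-I=e^{ct}(Q_0-I)$ on $\ker\eta$ with $c>0$. Along the flow $\partial_t g=-2\Ric^\top$ on $\ker\eta$, the relevant partial Ricci tensor, by Step 1 and \eqref{E-1k-0-nullity}, reduces to a linear expression in $Q_t$. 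The evolution therefore becomes the linear ODE $\partial_t Q_t=c\,(Q_t-I)$ on each $\ker\eta$-fiber. We set $\varphi_t=\varphi_0\,\psi_t^{-1/2}$-type rescaling, choosing it so that $d\eta(X,Y)=g_t(X,\varphi_tY)$ is preserved, because $\eta$ is fixed. Since $Q_0$ commutes with $\varphi_0$, all these operators commute, and the solution is explicit:
\[
Q_t=I+e^{ct}(Q_0-I).
\]
It stays positive for all $t$, because $Q_t$ lies between $I$ and $Q_0$ for $t\le0$ and is a positive combination otherwise. As $t\to-\infty$, $Q_t\to I$ exponentially, so $\varphi_t^2\to-I+\eta\otimes\xi$ and $g_t$ converges to a contact metric.

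\emph{Step 3: preservation.} We check that each $(\varphi_t,Q_t,\xi,\eta,g_t)$ is weak $K$-contact and satisfies \eqref{E-1k-0-nullity}. $\xi$ stays Killing because $\mathcal L_\xi$ annihilates $g_0$, $Q_0$ and $\varphi_0$, hence $\psi_t$. For \eqref{E-1k-0-nullity}, note that $g_t$ changes only on $\ker\eta$, $\xi$ is unit, and $\ker\eta$ is $\xi$-invariant. Using \eqref{E-nS-10}-type vanishing of transversal derivatives of $Q$, which here follows from $\mathcal N^{(5)}=0$, we compute $R^t_{X,Y}\xi$ through $\nabla^t\xi=-\varphi_t$. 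This gives $R^t_{X,Y}\xi=-(\nabla^t_X\varphi_t)Y+(\nabla^t_Y\varphi_t)X$, and the $K$-contact-type formula for $\nabla^t\varphi_t$ yields $\eta(X)\varphi_t^2Y-\eta(Y)\varphi_t^2X$.

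The limit structure is a contact metric structure satisfying \eqref{E-1k-0-nullity}, which passes to the limit by smooth convergence. Hence it is Sasakian by \cite[Theorem~6.3]{blair2010riemannian}.

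\emph{Main obstacle.} The delicate point is Step 3: proving that \eqref{E-1k-0-nullity}, equivalently $(\nabla^t_X\varphi_t)Y=g_t(Q_tX,Y)\xi-\eta(Y)Q_tX$, persists along the deformation. This requires full control of $\nabla^t\varphi_t$, not only its $\xi$-component. I would first try to show that the $\ker\eta$-component of $(\nabla\varphi)$ is unaffected, using that $\psi_t$ is a function of $Q_0$ and $Q_0$ is parallel along $\ker\eta$ modulo $\xi$.
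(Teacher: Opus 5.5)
Your Step~1 is fine in substance: tracing \eqref{E-37} instead of using it pointwise still gives $h=0$, because $h$ is self-adjoint under \eqref{Eq-2assump}. The proposal nevertheless has two genuine gaps. Both come from the same omission: you only obtain the $\xi$-component of $\nabla\varphi$, never the full formula \eqref{E-nabla=phi-nullity}, $(\nabla_X\varphi)Y=g(QX,Y)\xi-\eta(Y)QX$.

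The first gap is the claim $\mathcal N^{(5)}=0$. It does not follow from $\mathcal L_\xi\varphi=\mathcal L_\xi Q=0$, since $\mathcal N^{(5)}$ involves derivatives of $\widetilde Q$ along arbitrary directions in $\ker\eta$. Nor is it what the $\xi$-component of $\nabla\varphi$ measures. By \eqref{Eq-nabla-phi-N5}, for $X,Y,Z\in\ker\eta$ one has $2g((\nabla_X\varphi)Y,Z)=g(\mathcal N^{(1)}(Y,Z),\varphi X)+\mathcal N^{(5)}(X,Y,Z)$, so the $\ker\eta$-component is exactly what you need. The missing step is short. Once $\xi$ is Killing, $R_{\xi,X}Y=(\nabla_X\varphi)Y$ holds as vectors (see \eqref{E-R-varphi}). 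Pair symmetry and \eqref{E-1k-0-nullity} then give
\[
 g((\nabla_X\varphi)Y,Z)=g(R_{\xi,X}Y,Z)=g(R_{Y,Z}\,\xi,X)=\eta(Y)\,g(\varphi^2X,Z)-\eta(Z)\,g(\varphi^2Y,X),
\]
which is \eqref{E-nabla=phi-nullity}; the paper reaches it via the Bianchi identity and $dF=0$. With this, \eqref{4.NN} gives $g(\mathcal N^{(1)}(Y,Z),\varphi X)=0$, and \eqref{Eq-nabla-phi-N5} collapses to $\mathcal N^{(5)}=0$.

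The second gap is Step~3, the preservation of \eqref{E-1k-0-nullity} along the family. This is the heart of the theorem, and you leave it open; appealing to ``the $K$-contact-type formula for $\nabla^t\varphi_t$'' is circular. The paper handles it in Proposition~\ref{L-PRF-2}, by differentiating \eqref{E-nabla=phi-nullity} along the PRF using \eqref{eq2G} and \eqref{E-nS-10K}.

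Your intended route can be completed, and would in fact be more general than the paper's.
\begin{itemize}
\item \eqref{E-nabla=phi-nullity} and its consequence \eqref{E-nS-10K} say that $\varphi_0$ and $Q_0$ are parallel for the transverse connection $D_XY=(\nabla_XY)^\top$ on $\ker\eta$. This comes from \eqref{E-nabla=phi-nullity}, not from $\mathcal N^{(5)}=0$ as you claim.
\item Hence $\psi_t=f_t(Q_0)$ is $D$-parallel, and the Koszul formula shows that $g_t$ induces the same $D$ along $\ker\eta$.
\item For $X,Y\in\ker\eta$, $\eta(\nabla^t_XY)=-d\eta(X,Y)$ does not depend on $t$.
\item Combined with $\nabla^t\xi=-\varphi_t$ and $\nabla^t_\xi\varphi_t=\mathcal L_\xi\varphi_t=0$, this yields \eqref{E-nabla=phi-nullity} for every $g_t$ and any positive $f_t$. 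So no flow is needed at all.
\end{itemize}

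There are also two smaller errors.
\begin{itemize}
\item The PRF \eqref{E-GF-Rmix-Phi} does not become linear. It gives \eqref{E-PRF-fK}, i.e.\ $\dot Q=4Q(Q-I)$ on $\ker\eta$, so your family is not the PRF. This is acceptable, since the statement only asks for some family.
\item $Q_t=(1-e^{ct})I+e^{ct}Q_0$ is not a positive combination for $t>0$. An eigenvalue $\mu_0<1$ of $Q_0$ gives $1+e^{ct}(\mu_0-1)<0$ for large $t$. A choice such as $Q_t=\exp(e^{ct}\log Q_0)$ avoids this.
\end{itemize}
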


The following result
generalizes \cite[Theorem~7.7]{blair2010riemannian}
and dynamically characterizes Sasakian mani\-folds among w.c.m. manifolds with
the $(1,\mu)$-nullity condition and \eqref{Eq-2assump},
and for $\kappa<1$ we find a bi-Legendrian structure,
see Definition~\ref{D-002}, on a w.c.m. manifold satisfying the condition \eqref{E-nS-10}.

\begin{theorem}\label{T-04}
Let a w.c.m. manifold
$M^{2n+1}(\varphi_0,Q_0,\xi,\eta,g_0)$ satisfy
\eqref{Eq-2assump} and
the $(\kappa,\mu)$-nullity condition \eqref{E-k-mu-nullity}.
Then $\kappa\le 1$, and

\smallskip
\noindent\
$(i)$ if $\kappa=1$, then the structure is  weak $K$-contact with $\mathcal{N}^{(5)}=0$,
and there exists a smooth family $(\varphi_t,Q_{\,t},\xi,\eta,g_t)\ (t\in\mathbb{R})$
of weak $K$-contact structures on $M$
with $\ker\eta$-adapted metrics $g_t$
satisfying  \eqref{E-1k-0-nullity}
that converges exponentially fast, as $t\to-\infty$, to a Sasakian structure.

\smallskip\noindent\
$(ii)$ if $\kappa<1$ and \eqref{E-nS-10} is true, then the eigenvalues of
$\widetilde h$, see \eqref{Eq-4.1}, are $\{\,0,\pm\sqrt{1-\kappa}\,\}$,
$\ker\widetilde h=\{\xi\}$,
and the orthogonal decomposition
$\,\ker\eta={\cal D}_+\oplus{\cal D}_-$ is true, where ${\cal D}_\pm$ are $n$-dimensional
eigen-distributions of the eigenvalues $\pm\sqrt{1-\kappa}$, respectively, of $\widetilde h$, and the distributions ${\cal D}_\pm$ determine totally geodesic~foliations.
In~other words, ${\cal D}_+$ and ${\cal D}_-$ are conjugate Legendrian foliations.
\end{theorem}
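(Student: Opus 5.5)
The plan is to reduce everything to the weak analogues of the classical identities for contact metric manifolds, as in \cite[Section~7.3]{blair2010riemannian}, using the tensors $h$, $\widetilde h$ from \eqref{Eq-4.1}. Three facts are needed:
\begin{itemize}
\item the formula $\nabla\xi = -\varphi - \varphi h$, suitably modified by $Q$;
\item the trace identity for $\Ric(\xi,\xi)$;
\item the identity $\nabla_\xi h = \varphi - \varphi h^2 - \varphi R_{\xi,\cdot}\,\xi$, again modified by $Q$.
\end{itemize}
Assumption \eqref{Eq-2assump}, i.e. $\mathcal L_\xi Q=0$, should make $h$ anticommute with $\varphi$ and commute with $Q$. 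It should also kill the extra terms, so that $\widetilde h$ plays the role of $h$ with $\widetilde h^2$ symmetric and nonnegative.

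First step: set $X=\xi$ in \eqref{E-k-mu-nullity}. This gives $R_{\xi,Y}\xi = \kappa\varphi^2 Y + \mu hY$. Substitute it into the weak version of the Blair identity $\tfrac12(R_{\xi,X}\xi - \varphi R_{\xi,\varphi X}\xi) = (\text{weak }\varphi^2\text{-term}) + h^2 X$. The $\mu$-terms cancel because $h\varphi=-\varphi h$. What remains is $\widetilde h^{\,2} = (\kappa-1)\varphi^2$ on $\ker\eta$, i.e. $\widetilde h^2 X=(1-\kappa)X$ for $X\in\ker\eta$. Nonnegativity of $\widetilde h^2$ (a symmetric square) gives $\kappa\le1$. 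Here I would try to show that the $Q$-corrections in the weak identity combine exactly into the passage from $h$ to $\widetilde h$; I expect this to be how $\widetilde h$ is defined.

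Case $(i)$, $\kappa=1$: then $\widetilde h=0$, hence $h=0$, so $\xi$ is Killing and the structure is weak $K$-contact. Then \eqref{E-k-mu-nullity} reduces to \eqref{E-1k-0-nullity}, since the $\mu$-term vanishes. Theorem~\ref{P-K-S} now applies directly and gives $\mathcal N^{(5)}=0$, the partial Ricci flow family, and exponential convergence to a Sasakian structure.

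Case $(ii)$, $\kappa<1$: from $\widetilde h^2=(1-\kappa)I$ on $\ker\eta$ and $\widetilde h\xi=0$, the spectrum is $\{0,\pm\sqrt{1-\kappa}\}$ with $\ker\widetilde h=\operatorname{span}\{\xi\}$. Anticommutation $\varphi\widetilde h=-\widetilde h\varphi$ shows that $\varphi$ maps ${\cal D}_+$ isomorphically onto ${\cal D}_-$, so both have dimension $n$. Symmetry of $\widetilde h$ gives orthogonality. Since $d\eta(X,Y)=g(X,\varphi Y)$ and $\varphi{\cal D}_\pm={\cal D}_\mp\perp{\cal D}_\pm$, each ${\cal D}_\pm$ is Legendrian, hence involutive, because $\eta([X,Y])=-2d\eta(X,Y)=0$ and the complement to $\xi$ is controlled below.

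The main obstacle is total geodesicity. Following Blair's proof of Theorem~7.7/Lemma, I would compute $\nabla_X\widetilde h$ for $X\in\ker\eta$ by differentiating the nullity condition via the second Bianchi identity. The key consequence is that $(\nabla_X\widetilde h)Y$ has trivial $\ker\eta$-component for $X,Y\in{\cal D}_\pm$. Assumption \eqref{E-nS-10}, $((\nabla_XQ)Y)^\top=0$, is exactly what removes the extra terms from differentiating $Q$ inside $\widetilde h$ and $\varphi^2$. Then for $X,Y\in{\cal D}_+$ (say $\lambda=\sqrt{1-\kappa}$) I use $(\widetilde h-\lambda)\nabla_XY = -(\nabla_X\widetilde h)Y$. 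Projecting to ${\cal D}_-$ gives $-2\lambda(\nabla_XY)_- = 0$. The $\xi$-component is $g(\nabla_XY,\xi)=-g(Y,\nabla_X\xi)$, which vanishes by the formula for $\nabla\xi$: indeed $\nabla_X\xi\in\varphi{\cal D}_+ + \varphi\widetilde h{\cal D}_+\subset{\cal D}_-$. Hence $\nabla_XY\in{\cal D}_+$, and similarly for ${\cal D}_-$. So both are totally geodesic Legendrian foliations, conjugate via $\varphi$.
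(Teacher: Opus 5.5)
Your treatment of $\kappa\le1$, of $(i)$, of the spectrum, dimensions and orthogonality of ${\cal D}_\pm$, of their Legendrian property via $\varphi{\cal D}_\pm={\cal D}_\mp$, and of the $\xi$-component of $\nabla_XY$ is fine and follows the paper. The genuine gap is the central step of $(ii)$.

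For $X,Y\in{\cal D}_+$ you correctly write $(\nabla_X\widetilde h)Y=(\lambda-\widetilde h)\nabla_XY$. The $\ker\eta$-component of this is $2\lambda(\nabla_XY)_{{\cal D}_-}$. So your ``key consequence'', that $(\nabla_X\widetilde h)Y$ has no $\ker\eta$-component, is not an intermediate lemma. It is a restatement of the ${\cal D}_-$-part of the total geodesicity you want, and nothing in the proposal proves it. Involutivity is deferred to the same step, since being Legendrian only removes the $\xi$-component of $[X,Y]$.

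The tool you name does not supply it. The second Bianchi identity applied to $\xi$ brings in terms $R_{X,Y}\nabla_Z\xi$, which \eqref{E-k-mu-nullity} does not control. None of your three listed identities involves $\nabla_X\widetilde h$ for $X\in\ker\eta$.

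What the nullity condition does give is $R_{X,Y}\xi=0$ for $X,Y\in\ker\eta$, and by \eqref{E-xi-n}
\[
 R_{X,Y}\,\xi=-(\nabla_X\varphi)(I+\widetilde h)Y+(\nabla_Y\varphi)(I+\widetilde h)X-\varphi\big[(\nabla_X\widetilde h)Y-(\nabla_Y\widetilde h)X\big].
\]
So information on $\nabla\widetilde h$, or directly on $[X,Y]$ and $\nabla_YX$, can be extracted only after $\nabla\varphi$ is controlled on ${\cal D}_\pm$.

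That is the missing idea: the weak Olszak-type identity \eqref{E-05b}, which is where \eqref{E-nS-10} is genuinely needed. Combined with the nullity condition, it yields the vanishing of $\nabla F$ on the relevant ${\cal D}_\pm$-triples, \eqref{Eq-phi03}--\eqref{Eq-phi001}. Pairing the display with $Z$ then gives
\[
0=-(1\pm\lambda)(\nabla_ZF)(X,Y)\mp2\lambda\,g(\varphi[X,Y],Z)\quad (X,Y,Z\in{\cal D}_\pm),
\]
hence involutivity. The analogous mixed-triple computation gives $g(\nabla_YX,\varphi Z)=0$, hence total geodesicity. Classically, too, one first obtains $\nabla\varphi$ from Olszak's lemma and only then $\nabla h$.

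Two minor slips. From \eqref{E-k-mu-nullity} one has $R_{\xi,Y}\xi=\kappa\varphi^2Y-\mu hY$. Also, \eqref{E-37} yields $h^2=(\kappa-1)Q\varphi^2=(1-\kappa)Q^2$ on $\ker\eta$. Your intermediate formula $\widetilde h^{\,2}=(\kappa-1)\varphi^2$ contradicts your (correct) conclusion $\widetilde h^{\,2}=(1-\kappa)I_{\ker\eta}$ unless $Q=I$, since $\varphi^2=-Q$ on $\ker\eta$.
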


According to the following theorem, if we  apply the transformation \eqref{E-cont-lambda} to a contact metric structure $(\varphi',\xi,\eta,g')$ satisfying the curvature-related condition \eqref{E-RXY-xi}, we obtain a w.c.m. structure $(\varphi,Q,\xi,\eta,g)$ satisfying the $(\kappa',\mu')$-nullity condition
with $\kappa'=3\,{\lambda}^2(1-{\lambda})$
and $\mu'=2\,{\lambda}(1-{\lambda})$.

\begin{theorem}\label{thm6}
$(i)$ Let $M^{2n+1}(\varphi',\xi,\eta,g')$ be a contact metric manifold
and ${\lambda}>0$ be a differentiable function on~$M$. Then $(\varphi,Q,\xi,\eta,g)$ is a w.c.m. structure on $M$, where
\begin{align}\label{E-cont-lambda}
 \varphi ={\lambda}\,\varphi',\qquad
 g ={\lambda}^{-1}g'
 +(1-{\lambda}^{-1})\,\eta\otimes\eta,
 \qquad
 Q ={\lambda}^2 I + (1-{\lambda}^2)\,\eta\otimes\xi.
\end{align}
$(ii)$
Moreover, let
${\lambda}>0$ be real, Then, $(\varphi,Q,\xi,\eta,g)$ satisfies the $(\kappa,\mu)$--nullity condition \eqref{E-k-mu-nullity}
if and only if $(\varphi',\xi,\eta,g')$ satisfies the $(\kappa',\mu')$-nullity condition
with $\kappa'= {\lambda}^2(\kappa+3-{3\lambda})$ and $\mu'=\lambda(\mu+2-{2\lambda})$.
\end{theorem}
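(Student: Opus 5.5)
The plan is to check part $(i)$ directly from the defining identities of a w.c.m.\ structure (Definition~\ref{D-02}). Part $(ii)$ is then a comparison of the two Levi-Civita connections, and hence of the curvature operators $R_{X,Y}\,\xi$ and $R'_{X,Y}\,\xi$, for constant ${\lambda}$.

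\emph{Step 1 (part $(i)$).}
\begin{itemize}
\item Since $\varphi'\xi=0$ and $\eta\circ\varphi'=0$, we get $\varphi\xi=0$ and $g(\xi,\xi)={\lambda}^{-1}+1-{\lambda}^{-1}=1$.
\item More generally, $g(X,\xi)=\eta(X)$, and $Q\xi=\xi$.
\item We have $\varphi^2={\lambda}^2\varphi'^2=-{\lambda}^2 I+{\lambda}^2\eta\otimes\xi=-Q+\eta\otimes\xi$.
\item Because $\eta(\varphi'Y)=0$, we get $g(X,\varphi Y)={\lambda}^{-1}g'(X,{\lambda}\varphi'Y)=g'(X,\varphi'Y)=d\eta(X,Y)$. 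Hence $\varphi$ is $g$-skew-symmetric and $d\eta$ is unchanged.
\item For the compatibility relation, $g(\varphi X,\varphi Y)={\lambda}\,g'(\varphi'X,\varphi'Y)={\lambda}(g'(X,Y)-\eta(X)\eta(Y))$.
\item On the other hand, $g(X,QY)={\lambda}^2g(X,Y)+(1-{\lambda}^2)\eta(X)\eta(Y)={\lambda}g'(X,Y)+(1-{\lambda})\eta(X)\eta(Y)$. Subtracting $\eta(X)\eta(Y)$ gives the same expression.
\end{itemize}
$Q$ is $g$-self-adjoint and positive because ${\lambda}>0$. None of this uses constancy of ${\lambda}$.

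\emph{Step 2 (the connection).} Let ${\lambda}$ be constant, put $c={\lambda}^{-1}$ and write $g=c\,g'+(1-c)\,\eta\otimes\eta$. Set $\nabla=\nabla'+A$. By the Koszul formula,
\[
2g(A(X,Y),Z)=(\nabla'_Xg)(Y,Z)+(\nabla'_Yg)(X,Z)-(\nabla'_Zg)(X,Y),\qquad \nabla'g=(1-c)\nabla'(\eta\otimes\eta).
\]
We use the contact metric identity $\nabla'_X\xi=-\varphi'X-\varphi'h'X$. Its skew part in $\nabla'\eta$ is $d\eta$, and its symmetric part is $-g'(\varphi'h'\cdot,\cdot)$. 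From these we obtain an explicit formula of the form
\[
A(X,Y)=\alpha\{\eta(X)\varphi'Y+\eta(Y)\varphi'X\}+\beta\,g'(\varphi'h'X,Y)\,\xi ,
\]
with constants $\alpha,\beta$ depending on ${\lambda}$, which we solve for. In particular $A(X,\xi)$ is a combination of $\varphi'X$ only, so $\nabla_X\xi$ is a combination of $\varphi'X$ and $\varphi'h'X$. From $\mathcal L_\xi$ applied to \eqref{E-cont-lambda} we also get $h=\frac12\mathcal L_\xi\varphi={\lambda}h'$.

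\emph{Step 3 (curvature).} Expand
\[
R_{X,Y}\,\xi=R'_{X,Y}\,\xi+(\nabla'_XA)(Y,\xi)-(\nabla'_YA)(X,\xi)+A(X,A(Y,\xi))-A(Y,A(X,\xi)).
\]
The $\nabla'A$ terms produce $\nabla'\varphi'$, which we eliminate with the contact metric formula for $(\nabla'_X\varphi')Y$ in terms of $g'$, $\eta$, $h'$. They also produce $\nabla'\eta$, which gives terms in $\varphi'^2$ and $h'$. The quadratic terms give $\varphi'^2$ contributions. The target identity is
\[
R_{X,Y}\,\xi=R'_{X,Y}\,\xi+a\{\eta(X)\varphi'^2Y-\eta(Y)\varphi'^2X\}+b\{\eta(Y)h'X-\eta(X)h'Y\}
\]
for explicit constants $a,b$ in ${\lambda}$. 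We then substitute $\varphi^2={\lambda}^2\varphi'^2$ and $h={\lambda}h'$ into \eqref{E-k-mu-nullity}. The $(\kappa,\mu)$ condition for $g$ becomes equivalent to the $(\kappa',\mu')$ condition for $g'$ with $\kappa'={\lambda}^2\kappa-a$ and $\mu'={\lambda}\mu-b$, and we verify that $a=-3{\lambda}^2(1-{\lambda})$ and $b=-2{\lambda}(1-{\lambda})$. Each step is an equivalence, so we get the stated ``if and only if''.

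The main obstacle is Step 3. Terms with $\nabla'h'$ (or $h'^2$) could appear and fail to cancel, and we must not use identities such as $h'^2=(\kappa'-1)\varphi'^2$ that hold only after assuming the nullity condition, since that would break the equivalence. We would first use the general contact metric identity $\nabla'_\xi h'=\varphi'-\varphi'h'^2-\varphi' R'_{\xi,\cdot}\,\xi$ only where it is unconditional. We would then check that the $h'^2$ terms from $A(X,A(Y,\xi))$ cancel against those from $\nabla'A$, so that only $\varphi'^2$ and $h'$ survive.
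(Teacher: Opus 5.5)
Your Step 1 is correct, and Steps 2--3 are the right computation. The proposal still cannot be completed as written, for two reasons.

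\textbf{The constants are wrong, and so is statement (ii).} The decisive problem is the closing claim ``we verify that $a=-3\lambda^2(1-\lambda)$ and $b=-2\lambda(1-\lambda)$'': it is false, and with it statement (ii) as printed. Carrying out your Step 2 gives $\alpha=1-\lambda$ and $\beta=(1-\lambda)/\lambda$, so $A(Y,\xi)=(1-\lambda)\varphi'Y$ for every $Y$. Consequently no $\nabla'h'$ ever appears. The $h'^2$ and $g'(h'X,Y)\xi$ contributions are symmetric in $X,Y$ and cancel, leaving
\[
 R_{X,Y}\,\xi=R'_{X,Y}\,\xi+(\lambda^2-1)\{\eta(X)\varphi'^2Y-\eta(Y)\varphi'^2X\}+2(\lambda-1)\{\eta(Y)h'X-\eta(X)h'Y\}+(1-\lambda)\,T(X^\top,Y^\top),
\]
where $T(X,Y)=(\nabla'_X\varphi')Y-(\nabla'_Y\varphi')X$. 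Hence $a=\lambda^2-1$ and $b=2(\lambda-1)$, and the correct relation is $\kappa'=1-\lambda^2(1-\kappa)$, $\mu'=\lambda\mu+2(1-\lambda)$.

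A counterexample to the printed relation: take $g'$ Sasakian and $\lambda=2$. Then $h'=h=0$, $T=0$ on $\ker\eta$, and the display gives $R_{X,Y}\xi=\eta(X)\varphi^2Y-\eta(Y)\varphi^2X$. So $(\varphi,Q,\xi,\eta,g)$ satisfies \eqref{E-k-mu-nullity} with $\kappa=1$ (and any $\mu$). The statement would then force $\kappa'=4(1+3-6)=-8$, which is impossible since $R'_{\xi,Y}\xi=-Y$ on $\ker\eta$. Geometrically these are Berger spheres, whose $\xi$-sectional curvature is $\lambda^2$.

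The paper's own three-dimensional groups in the proof of Corollary~\ref{Thm-lie} also confirm the corrected relation. There $1-\kappa=(c_3-c_2)^2/(4\lambda^2)$ and $\mu=(2\lambda-c_2-c_3)/\lambda$, while Blair's values for $g'$ (frame $\lambda^{-1/2}E_1,\lambda^{-1/2}E_2,\xi$) are $1-\kappa'=(c_3-c_2)^2/4$ and $\mu'=2-c_2-c_3$.

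The paper's proof reaches the printed constants only because its connection formula \eqref{E-nabla'} has $\varphi$ in its last two terms where $\varphi'=\lambda^{-1}\varphi$ belongs. Setting $Y=\xi$ there does not return $\nabla'\xi=-\varphi'-\varphi'h'$. So no correct execution of your plan can ``verify'' those values; what you can prove is the statement with the corrected $\kappa',\mu'$.

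\textbf{The $\nabla'\varphi'$ term cannot be eliminated as you propose.} Even for the corrected statement, there is no general contact metric formula for $(\nabla'_X\varphi')Y$ in terms of $g'$, $\eta$, $h'$. The identity $(\nabla'_X\varphi')Y=g'(X+h'X,Y)\xi-\eta(Y)(X+h'X)$ is Tanno's characterization of CR-integrability, not an identity valid on every contact metric manifold. The residual term $(1-\lambda)\,T(X^\top,Y^\top)\in\ker\eta$ is exactly the CR obstruction, and your plan never disposes of it.

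\begin{itemize}
\item In the direction ``$g'$ is a $(\kappa',\mu')$-space $\Rightarrow$ $g$ satisfies \eqref{E-k-mu-nullity}'', it vanishes because $(\kappa',\mu')$-spaces are CR-integrable.
\item In the converse direction, the hypothesis $R_{X,Y}\xi=0$ for $X,Y\in\ker\eta$ only yields $R'_{X,Y}\xi=-(1-\lambda)T(X,Y)$ there. For $\lambda\ne1$ you therefore need an extra argument proving $T=0$ on $\ker\eta$. This is immediate when $\kappa=1$, since then $h'=0$ and $R'_{X,Y}\xi=-T(X,Y)$ on $\ker\eta$. For $\kappa<1$ it needs an Olszak-type identity on the w.c.m.\ side, which is what the paper attempts via \eqref{E-05b} and \eqref{Eq-varphi}.
\end{itemize}

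So your claim that ``each step is an equivalence'' fails at this point. The obstacle you anticipated, surviving $\nabla'h'$ and $h'^2$ terms, does not arise.
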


The following consequence of Theorems~\ref{T-04} and \ref{thm6} generalizes \cite[Theorem~3]{blair1994}
and characterizes three-dimensional w.c.m. manifolds locally isometric to Lie groups with left-invariant metrics.

\begin{corollary}\label{Thm-lie}
A w.c.m. manifold
$M^{3}(\varphi,Q,\xi,\eta,g)$, satisfying conditions \eqref{E-nS-10} and
\eqref{E-k-mu-nullity} with $\kappa<1$,
is locally isometric to one of the following unimodular Lie groups with a left-invariant metric:


$\bullet$
$SO(3)$ -- rotations about the origin of $\mathbb{R}^3$;

$\bullet$ $SL(2,\mathbb{R})$ -- linear transformations of $\mathbb{R}^2$ that preserve oriented area;

$\bullet$ $E(2)$ -- rigid motions of $\mathbb{R}^2$;

$\bullet$ $E(1,1)$ -- rigid motions of the Minkowski~plane.
\end{corollary}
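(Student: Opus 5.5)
The plan is to reduce Corollary~\ref{Thm-lie} to the classical three-dimensional result of Blair, Koufogiorgos and Sharma \cite[Theorem~3]{blair1994}. That theorem says a non-Sasakian contact metric $(\kappa,\mu)$-manifold $M^3$ is locally isometric to one of the unimodular Lie groups $SO(3)$, $SL(2,\mathbb{R})$, $E(2)$, $E(1,1)$ with a left-invariant metric. The reduction uses Theorem~\ref{thm6} to pass from the given w.c.m.\ structure to a genuine contact metric structure with constant $(\kappa',\mu')$.

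First step: show that $Q$ has the form $Q=\lambda^2 I+(1-\lambda^2)\eta\otimes\xi$ with $\lambda>0$ constant. In dimension three, $\ker\eta$ is two-dimensional and $\varphi$ is non-degenerate and skew-symmetric on it. Since $Q=-\varphi^2$ on $\ker\eta$, we get $Q|_{\ker\eta}=\lambda^2\,{\rm id}$ for some positive function $\lambda$. Condition \eqref{E-nS-10} gives $X(\lambda^2)Y^\top=0$ for $X,Y\in\ker\eta$, so $\lambda$ is constant along $\ker\eta$. Since $\ker\eta$ is bracket-generating (it is a contact distribution), $\lambda$ is constant on $M$. Note that \eqref{Eq-2assump} then holds automatically, so Theorem~\ref{T-04} applies. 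Part~(ii) of Theorem~\ref{T-04} gives that $\widetilde h$ has eigenvalues $0,\pm\sqrt{1-\kappa}$ with $\sqrt{1-\kappa}\ne0$, so the structure is non-Sasakian.

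Second step: invert \eqref{E-cont-lambda}. Put $\varphi'=\lambda^{-1}\varphi$ and $g'=\lambda g+(1-\lambda)\eta\otimes\eta$. One checks directly that $(\varphi',\xi,\eta,g')$ is a contact metric structure: $\varphi'^2=-I+\eta\otimes\xi$, and $g'(X,\varphi'Y)=g(X,\varphi Y)=d\eta(X,Y)$. Theorem~\ref{thm6}(ii) then shows it is a $(\kappa',\mu')$-manifold with $\kappa'=\lambda^2(\kappa+3-3\lambda)$ and $\mu'=\lambda(\mu+2-2\lambda)$. We also need $\kappa'<1$, i.e.\ that $(\varphi',\xi,\eta,g')$ is non-Sasakian. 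The cleanest route is to compare $h'=\tfrac12\mathcal L_\xi\varphi'=\lambda^{-1}h$: $h'$ vanishes iff $h$ does, and $h\ne0$ because $\widetilde h\ne0$ (up to the normalization in \eqref{Eq-4.1}). Then \cite[Theorem~3]{blair1994} gives that $(M,g')$ is locally isometric to one of the four groups with a left-invariant metric.

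Third step: transfer the conclusion back to $g$. Write $g=\lambda^{-1}g'+(1-\lambda^{-1})\eta\otimes\eta$. In the Blair--Koufogiorgos--Sharma model, the local isometry identifies $\xi$ and $\eta$ with a left-invariant vector field and a left-invariant 1-form. This holds because the model is built from a left-invariant orthonormal frame $\{\xi,e,\varphi'e\}$ with constant structure constants, and the frame can be chosen so. Hence $g$ is also left-invariant on the same group. Rescaling by a constant along $\ker\eta$ keeps the structure constants constant, and unimodularity is preserved since it depends only on the Lie algebra. This third step is the main obstacle. It requires checking, from the proof in \cite{blair1994}, that the isometry respects $\eta$, rather than only knowing that $g'$ is locally isometric to some left-invariant metric. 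If that is not explicit, the fallback is to work with the frame directly. Compute the brackets of $\{\xi,e_1,e_2\}$, with $e_i$ eigenvectors of $h$, for the metric $g$ using \eqref{E-k-mu-nullity}. Show that they are constants determined by $\kappa,\mu,\lambda$, and conclude by Lie's third theorem and Milnor's classification of three-dimensional unimodular metric Lie algebras.
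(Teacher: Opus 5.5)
Your proposal is correct and follows the same route as the paper. Both arguments show $Q|_{\ker\eta}=\lambda^2 I_{\ker\eta}$ with $\lambda>0$ constant, invert the transformation \eqref{E-cont-lambda} of Theorem~\ref{thm6} to get the contact metric $(\kappa',\mu')$-structure $(\lambda^{-1}\varphi,\xi,\eta,\lambda g+(1-\lambda)\eta\otimes\eta)$, apply \cite[Theorem~3]{blair1994}, and read the left-invariant model back in the $g$-orthonormal frame $\{E_1,E_2,\xi\}$ with $[E_1,E_2]=2\lambda\xi$. Your proof that $\lambda$ is constant ($X(\lambda^2)=0$ for $X\in\ker\eta$, plus the bracket-generating property of $\ker\eta$) and your non-Sasakian check ($h'=\lambda^{-1}h\ne0$, rather than the formula for $\kappa'$) supply steps the paper only cites or leaves implicit.
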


The following statement, generalizing \cite[Theorem~7.5]{blair2010riemannian}, shows that a $(2n+1)$-dimensional w.c.m. manifold splits under the assumptions \eqref{Eq-2assump} and \eqref{E-nS-10} and the curvature-related condition \eqref{E-RXY-xi}, where one of the factors is equal to
$\mathbb{R}^{n+1}$, and the curvature tensor of the second factor is completely~defined.

\begin{theorem}\label{T-04b}
Let a w.c.m. manifold
$M^{2n+1}(\varphi,Q,\xi,\eta,g)$
with the curvature-related property
\eqref{E-RXY-xi} satisfy conditions
\eqref{Eq-2assump} and \eqref{E-nS-10}.
Then the manifold  splits
along the distributions
\({\cal D}_- \oplus \{\xi\}\)
with flat integral manifolds, and
\({\cal D}_+\), where ${\cal D}_\pm$
are the $n$-dimensional eigen-distributions of the eigenvalues $\pm1$,
respectively, of $\widetilde h$, see \eqref{Eq-4.1},
and $({\cal D}_+,{\cal D}_-)$ is a bi-Legendrian structure.
Moreover, if \(n>1\), then \(Q\,|_{\ker\eta}=\lambda I_{\ker\eta}\)\,
for some constant
\(\lambda>0\), and integral manifolds of \(\mathcal{D}_{+}\) have
constant sectional curvature $1/\sqrt{4\lambda}$.
\end{theorem}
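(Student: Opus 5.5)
The statement is the special case $\kappa=\mu=0$ of the $(\kappa,\mu)$-nullity condition \eqref{E-k-mu-nullity}, since \eqref{E-RXY-xi} is exactly \eqref{E-k-mu-nullity} with $\kappa=\mu=0$. So I will first apply Theorem~\ref{T-04}$(ii)$. The hypotheses \eqref{Eq-2assump} and \eqref{E-nS-10} are assumed, and $\kappa=0<1$. The theorem then gives $\ker\widetilde h=\{\xi\}$ and eigenvalues $\{0,\pm1\}$ of $\widetilde h$. It also gives the orthogonal decomposition $\ker\eta={\cal D}_+\oplus{\cal D}_-$ into $n$-dimensional eigen-distributions, both of which define totally geodesic Legendrian foliations. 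In particular $({\cal D}_+,{\cal D}_-)$ is a bi-Legendrian structure.

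Next I will upgrade this to a local de Rham splitting. Using the weak analogue of $\nabla\xi=-\varphi-\varphi h$ (the formula for $\nabla_X\xi$ in terms of $\varphi$ and $\widetilde h$ established in the paper), I compute $\nabla_X\xi$ on ${\cal D}_\pm$. For $X\in{\cal D}_-$ one gets $\nabla_X\xi=-\varphi(1-1)X=0$, so $\nabla\xi$ vanishes on ${\cal D}_-\oplus\{\xi\}$. For $X\in{\cal D}_+$ one gets $\nabla_X\xi=-2\varphi X\in{\cal D}_-$, using that $\varphi$ anticommutes with $\widetilde h$ and hence swaps ${\cal D}_\pm$. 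Together with the total geodesy of ${\cal D}_\pm$ from Theorem~\ref{T-04}, this should show that both ${\cal D}_+$ and ${\cal D}_-\oplus\{\xi\}$ are parallel distributions. Concretely, $g(\nabla_X Y,\xi)=-g(Y,\nabla_X\xi)$ vanishes for $Y\in{\cal D}_+$ and $X$ arbitrary once the mixed terms are handled via \eqref{E-nS-10}. The de Rham theorem then gives the local product.

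Flatness of the leaves of ${\cal D}_-\oplus\{\xi\}$ comes from \eqref{E-RXY-xi}. Applying $R_{X,Y}\xi=0$ and differentiating along ${\cal D}_-$, with $\nabla\xi|_{{\cal D}_-}=0$ and the parallel $\varphi$-swapping, I expect $R_{X,Y}Z=0$ for $X,Y,Z\in{\cal D}_-$. This is the argument of \cite[Theorem~7.5]{blair2010riemannian}, adapted to the weak setting.

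For $n>1$, I would compute $R_{X,Y}\xi$ for $X,Y\in{\cal D}_+$ via $\nabla\xi=-2\varphi$ there. This expresses the curvature of the ${\cal D}_+$-factor through $\varphi$ and $Q=-\varphi^2$ restricted to $\ker\eta$. Setting the result to zero should give $R_{X,Y}Z=c\,\{g(QY,Z)X-g(QX,Z)Y\}$-type identities on ${\cal D}_+$. A Schur-type argument on the factor (valid for $n\ge2$), combined with \eqref{Eq-2assump} and \eqref{E-nS-10} to control $\nabla Q$, should force $Q|_{\ker\eta}=\lambda I$ with $\lambda$ constant. It should also give constant sectional curvature $1/\sqrt{4\lambda}$, with the normalization coming from the weak identity $g(\varphi X,\varphi Y)=g(QX,Y)$.

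The main obstacle is this last step. Deriving $Q=\lambda I$ requires carefully tracking $\nabla Q$ terms, which vanish in the classical case, and I would lean on \eqref{E-nS-10} to kill their tangential parts.
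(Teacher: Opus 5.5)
Your outline up to the splitting follows the paper: Theorem~\ref{T-04}$(ii)$ with $\kappa=\mu=0$, then parallelism, de Rham, and flatness as in Blair. Two points there need repair.

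First, total geodesy of ${\cal D}_\pm$ together with $\nabla\xi=-\varphi(I+\widetilde h)$ controls $\nabla_{{\cal D}_\pm}{\cal D}_+$. Also, $g(\nabla_XY,\xi)=-g(Y,\nabla_X\xi)=0$ holds simply because $\nabla_X\xi\in{\cal D}_-$ for every $X$, so \eqref{E-nS-10} is not needed there. What these facts do not give is $\nabla_\xi{\cal D}_+\subset{\cal D}_+$. The paper obtains it from $\nabla_{[\xi,X]}\xi=-R_{\xi,X}\xi=0$ for $X\in{\cal D}_-$, whence $[\xi,X]\in{\cal D}_-$ and $\nabla_\xi X=[\xi,X]\in{\cal D}_-$.

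Second, for flatness, $R_{X,Y}\xi=0$ with $X,Y\in{\cal D}_-$ is automatic and yields nothing. The useful identity is the mixed one. Take $X_a\in{\cal D}_+$ foliate, i.e.\ $[V,X_a]$ tangent to the leaves of ${\cal D}_-\oplus\{\xi\}$ for $V$ tangent to them. Then $0=R_{V,X_a}\xi=-2\nabla_V(\varphi X_a)$, so $\{\varphi X_a,\xi\}$ is a parallel frame on the totally geodesic leaves.

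The genuine gap is the part for $n>1$. For $X,Y\in{\cal D}_+$, $R_{X,Y}\xi=0$ only says $(\nabla_X\varphi)Y=(\nabla_Y\varphi)X$. This is first order in $\varphi$, and in the product it holds automatically because $\xi$ is tangent to the other factor. So it cannot give the curvature of the ${\cal D}_+$-leaves.

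The missing idea is as follows. By \eqref{Eq-phi02} (this is where \eqref{E-nS-10} really enters) and $(\nabla_X\varphi)\xi=-\varphi\nabla_X\xi=-2QX$, one gets $(\nabla_X\varphi)Y=2g(QX,Y)\xi$ on ${\cal D}_+$. Differentiating $\varphi Z$ twice and using total geodesy of ${\cal D}_+$ gives, for $X,Y,Z,W\in{\cal D}_+$,
\[
 g(R_{X,Y}\varphi Z,\varphi W)-g(R_{X,Y}Z,QW)=4\,g(QX,Z)\,g(QY,W)-4\,g(QY,Z)\,g(QX,W).
\]
The first term vanishes by the splitting, since $\varphi Z,\varphi W\in{\cal D}_-$. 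This yields \eqref{E-RXY3}.

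Scalarity of $Q|_{{\cal D}_+}$ then comes from the skew-symmetry of $g(R_{X,Y}Z,W)$ in $Z,W$, not from a Schur argument: take $X=Z$ unit to get $QZ=g(QZ,Z)Z$. The relation $[Q,\varphi]=0$ transfers this to ${\cal D}_-$. The function $\lambda$ is constant because the $\ker\eta$-part of $(\nabla_XQ)Y$ is $X(\lambda)Y$, which \eqref{E-nS-10} kills, while $\nabla_\xi Q=0$. Schur's lemma would fail anyway for $2$-dimensional leaves ($n=2$), and it is unnecessary once $\lambda$ is constant.

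Finally, the resulting formula $g(R_{X,Y}Z,W)=4\lambda\{g(Y,Z)g(X,W)-g(X,Z)g(Y,W)\}$ gives sectional curvature $4\lambda$; for $\lambda=1$ this is Blair's $\mathbb{S}^n(4)$. The quantity $1/\sqrt{4\lambda}$ is the radius, so no normalization will produce it as the curvature.
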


By a transformation analogous to \eqref{E-cont-lambda}, the w.c.m. structure
in Theorem~\ref{T-04b} reduces to the contact metric structure.
The next consequence of Theorem~\ref{T-04b} generalizes a result by Blair~(1976).

{\begin{corollary}\label{cor4.1}
A w.c.m. manifold of dimension $\ge 5$ with conditions \eqref{Eq-2assump} and \eqref{E-nS-10} cannot be flat.
\end{corollary}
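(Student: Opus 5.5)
We argue by contradiction, using Theorem~\ref{T-04b}. Suppose that $M^{2n+1}(\varphi,Q,\xi,\eta,g)$ with $n\ge2$ satisfies \eqref{Eq-2assump} and \eqref{E-nS-10} and is flat, i.e., $R\equiv0$. Then in particular $R_{X,Y}\,\xi=0$ for all $X,Y$, so the curvature-related property \eqref{E-RXY-xi} holds. Hence all hypotheses of Theorem~\ref{T-04b} are fulfilled.

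By Theorem~\ref{T-04b}, $M$ locally splits along ${\cal D}_-\oplus\{\xi\}$, whose integral manifolds are flat, and along ${\cal D}_+$. Since $n>1$, we also get $Q|_{\ker\eta}=\lambda I_{\ker\eta}$ for a constant $\lambda>0$. The integral manifolds of ${\cal D}_+$ are $n$-dimensional, $n\ge2$, and have constant sectional curvature $1/\sqrt{4\lambda}>0$.

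The splitting is a local Riemannian product, so each leaf of ${\cal D}_+$ is totally geodesic. By the Gauss equation, its intrinsic curvature equals the restriction of the ambient curvature $R$ to ${\cal D}_+$. Concretely, take orthonormal $X,Y\in{\cal D}_+$, which exist because $\dim{\cal D}_+=n\ge2$. Then
\[
 0=g(R_{X,Y}Y,X)=K_{{\cal D}_+}(X,Y)=1/\sqrt{4\lambda}>0 ,
\]
a contradiction. Therefore $M$ cannot be flat.

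The only point needing care is the dimension hypothesis. It is used twice: to invoke the ``moreover'' part of Theorem~\ref{T-04b}, which yields the constant $\lambda$, and to guarantee that the sectional curvature of the $n$-dimensional leaves of ${\cal D}_+$ is meaningful. For $n=1$ the argument fails, consistent with the flat three-dimensional examples.
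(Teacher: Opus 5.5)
Your proof is correct and is essentially the paper's argument. Flatness gives \eqref{E-RXY-xi}, so Theorem~\ref{T-04b} applies, and the nonvanishing restriction of $R$ to $\mathcal{D}_+$ with $\dim\mathcal{D}_+=n\ge2$ then contradicts $R=0$. The paper phrases this last step through the identity \eqref{E-RXY3} and the equality case of Cauchy--Schwarz, whereas you use the theorem's stated constant curvature together with the Gauss equation for the totally geodesic leaves; since your argument only needs positivity, it is unaffected by the fact that the constant is really $4\lambda$ (as \eqref{E-RXY3} and Example~\ref{Ea-dim3} give) rather than the $1/\sqrt{4\lambda}$ in the statement.
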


The structure of the paper is as follows.  Section~\ref{sec:intro} presents an overview of the study and the main results.
In Section~\ref{sec:1} we review the necessary identities and known results on w.c.m. manifolds, and in Section~\ref{sec:02.2}  we prove that the property \eqref{E-1k-0-nullity}
remains invariant under the partial Ricci flow.
In~Section~\ref{sec4} we prove the main results and give some illustrative examples.
Section~\ref{sec:3} presents auxiliary
statements
concerning the curvature and structure tensors of w.c.m. manifolds.
Our proofs rely on both classical techniques and the properties of newly introduced structure~tensors.

\section{Preliminaries on w.c.m. manifolds}
\label{sec:1}

Here, we review the necessary definitions and results on w.c.m. manifolds; see \cite{rov-119,rov-RS23,Rov-Wa-2021,RWo-2}.

\begin{definition}\label{D-02}
\rm
A \textit{weak almost contact metric structure} on a differentiable connected manifold $M^{2n+1}$ is a set $(\varphi,Q,\xi$, $\eta,g)$, where $g$ is a Riemannian metric on $M$, $\varphi$ is a skew-symmetric (1,1)-tensor of rank $2n$, $\xi$ is a unit (Reeb) vector field, $\eta$ is a 1-form such that ${\eta}({\xi})=1$, satisfying
\begin{align}\label{2.2}
 & g({\varphi} X,{\varphi} Y)= g(X,Q\,Y)
 -{\eta}(X)\,{\eta}(Y)
 \qquad (X,Y\in\mathfrak{X}_M),
\end{align}
and $Q:=-{\varphi}^2 + {\eta}\otimes {\xi}$ is a self-adjoint (1,1)-tensor field
(and $Q=I$ for almost contact metric manifolds).
A~\textit{weak contact metric $($w.c.m.$)$ structure} is a weak almost contact metric structure satisfying
\begin{align}\label{2.2b}
 d\eta(X,Y)=g(X,\varphi Y).
\end{align}
A w.c.m. structure is called a \textit{weak $K$-contact structure} if the
vector field ${\xi}$ is Killing,~i.e., ${\cal L}_{\xi}\,g=0$.
\end{definition}

By \eqref{2.2b},
$\eta$ is a contact 1-form:
$\eta \wedge (d\eta)^n\ne0$, i.e., $(M^{2n+1},\eta)$ is a contact manifold.
The~self-adjoint tensor
$\widetilde Q:=Q-I$
measures the difference between w.c.m. and contact metric~structures.

\begin{definition}\label{D-002}\rm
An $n$-dimensional distribution $D$ on a contact manifold $(M^{2n+1},\eta)$ is called a \textit{Legendrian distribution} if $d\eta(X,Y)=0$ for all vectors $X,Y$ in $D$. When the distribution $D$ is involutive, it defines a \textit{Legendrian foliation} on $(M^{2n+1},\eta)$.
Two Legendrian distributions $D$ and $D'$, or, foliations ${\cal F}$ and ${\cal F}'$,
of a w.c.m. manifold $M^{2n+1}(\varphi,Q,\xi,\eta,g)$
are said to be \textit{conjugate} if $\varphi(D')=D$,
or $\varphi(T{\cal F}')=T{\cal F}$, respectively,
and the tangent bundle splits as the orthogonal sum
$TM=D\oplus D'\oplus\{\xi\}$. Conjugate distributions or foliations form a pair known as a \textit{bi-Legendrian structure} on $M^{2n+1}$.
\end{definition}

The
tensor $Q$ of a w.c.m. structure $({\varphi},Q,\xi,\eta,g)$ is positive definite, and the following is true:
\begin{align}\label{P-6}
 {\varphi}\,\xi=0,\quad
 \eta\circ{\varphi}=0,\quad
 [Q,\,{\varphi}]=0,\quad
 Q\,\xi=\xi,\quad
 \widetilde{Q}\,\xi=0,\quad
 \eta\circ Q=\eta,\quad
 \eta\circ\widetilde{Q}=0,
\end{align}
the $\xi$-trajectories are geodesics: $\nabla_\xi\,\xi=0$, a $2n$-dimensional distribution $\ker\eta$ is $\varphi$-invariant,
but it is not involutive:
\begin{align*}
 g([X, {\varphi} X], {\xi})= 2\,d{\eta}({\varphi} X,X) = 2\,g({\varphi} X,{\varphi} X)>0\quad (X\in\ker\eta\setminus\{0\}).
\end{align*}

The~{fundamental 2-form} ${F}(X,Y)=g(X,\varphi Y)$
of a w.c.m. manifold
is closed: $d{F}
=0$; therefore,
\begin{align}\label{dPhi}
 (\nabla_X {F})(Y,Z)+(\nabla_Y {F})(Z,X)+(\nabla_Z {F})(X,Y)= 0 \quad (X,Y,Z\in\mathfrak{X}_M) .
\end{align}
The covariant derivative of
${F}(Y,Z)=g(Y,\varphi Z)$ along $X\in\mathfrak{X}_M$ gives
\begin{align}\label{E-nabla-phi-Phi}
 (\nabla_X{F})(Y,Z)=g(Y,(\nabla_X\varphi)Z). 
\end{align}
The Lie derivatives of $\varphi$ and
$g$ in the $Z$-direction are given by
\begin{align}\label{E-Killing}
 (\mathcal{L}_Z\,\varphi)X
 = [Z, \varphi X] - \varphi\,[Z, X] , \qquad
 (\mathcal{L}_{Z}\,g)(X,Y)
  = g(\nabla_Y {Z}, X) + g(\nabla_X {Z}, Y) .
\end{align}

\begin{Proposition}[\cite{rov-119}]\label{thm6.2}
For a w.c.m. structure $(\varphi,Q,\xi,\eta,g)$, we have
$d\eta(\varphi X, Y) = d\eta(\varphi Y, X)$ and
\begin{align}\label{3.1AA}
 2\,g((\nabla_{\xi}\,\varphi)X, Y)
 = g([\xi,\varphi Y],\widetilde QX)
 -g([\xi,\varphi X],\widetilde QY)
 -\xi(g(\widetilde QX, \varphi Y)),
\end{align}
in addition, the tensor $\mathcal{L}_{\xi}\,\varphi$
vanishes if and only if $\,\xi$ is a Killing vector field, i.e., $\mathcal{L}_{\xi}\,g=0$.
\end{Proposition}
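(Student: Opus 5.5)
We need to prove Proposition \ref{thm6.2}. It has three parts: the symmetry of $d\eta(\varphi X,Y)$, formula \eqref{3.1AA}, and the equivalence $\mathcal{L}_\xi\varphi=0\iff\mathcal{L}_\xi g=0$. We treat them in that order.

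\emph{Symmetry.} By \eqref{2.2b} we have $d\eta(\varphi X,Y)=g(\varphi X,\varphi Y)$. By \eqref{2.2} this equals $g(X,QY)-\eta(X)\eta(Y)$. Since $Q$ is self-adjoint, the expression is symmetric in $X,Y$, which gives $d\eta(\varphi X,Y)=d\eta(\varphi Y,X)$.

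\emph{Formula \eqref{3.1AA}.} We use closedness of $F$, i.e. \eqref{dPhi}, together with \eqref{E-nabla-phi-Phi}. Put $Z=\xi$ and write $(\nabla_\xi F)(X,Y)=-(\nabla_X F)(Y,\xi)-(\nabla_Y F)(\xi,X)$. Next expand
\[
(\nabla_X F)(Y,\xi)=-g(Y,\varphi\nabla_X\xi)=g(\varphi Y,\nabla_X\xi),
\]
using $\varphi\xi=0$ and the skew-symmetry of $\varphi$; the term $(\nabla_Y F)(\xi,X)$ is expanded in the same way. The quantities $g(\nabla_X\xi,\varphi Y)$ must then be rewritten through Lie brackets with $\xi$: torsion-freeness gives $\nabla_X\xi=\nabla_\xi X-[\xi,X]$, and $\nabla_\xi$ is handled by differentiating the identity \eqref{2.2} along $\xi$.

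Concretely, we compute $\xi\big(g(\varphi X,\varphi Y)\big)=\xi\big(g(X,QY)-\eta(X)\eta(Y)\big)$. We also use $\nabla_\xi\xi=0$ and $\mathcal{L}_\xi\eta=0$; the latter holds because $i_\xi d\eta=0$ (as $\varphi\xi=0$) and $\eta(\xi)=1$. Separating the contributions of $Q=I+\widetilde Q$, the $I$-part reproduces the classical identity $\nabla_\xi\varphi=0$, which is valid for contact metric structures. Only the $\widetilde Q$-terms survive, and they are exactly $g([\xi,\varphi Y],\widetilde QX)-g([\xi,\varphi X],\widetilde QY)-\xi(g(\widetilde QX,\varphi Y))$.

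The cleanest route is probably Koszul-type bookkeeping. We write $2g((\nabla_\xi\varphi)X,Y)=2g(\nabla_\xi(\varphi X),Y)+2g(\nabla_\xi X,\varphi Y)$ and expand both terms using \eqref{2.2b} in the form $2d\eta(U,V)=U\eta(V)-V\eta(U)-\eta([U,V])$, choosing $U,V$ among $\xi,X,\varphi X,Y,\varphi Y$. This bookkeeping, and making sure all non-$\widetilde Q$ terms cancel, is the main obstacle.

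\emph{Equivalence.} From \eqref{E-Killing} and $[\xi,\varphi X]=\nabla_\xi(\varphi X)-\nabla_{\varphi X}\xi$ we get
\[
(\mathcal{L}_\xi\varphi)X=(\nabla_\xi\varphi)X-\nabla_{\varphi X}\xi+\varphi\nabla_X\xi .
\]
Next we differentiate \eqref{2.2b} in the Lie sense: since $\mathcal{L}_\xi d\eta=0$, this gives $(\mathcal{L}_\xi g)(X,\varphi Y)+g(X,(\mathcal{L}_\xi\varphi)Y)=0$.

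If $\mathcal{L}_\xi\varphi=0$, then $\mathcal{L}_\xi g$ vanishes on $TM\times\ker\eta$, because $\varphi$ maps onto $\ker\eta$. On the remaining pair we have $(\mathcal{L}_\xi g)(\xi,\xi)=2g(\nabla_\xi\xi,\xi)=0$, so $\mathcal{L}_\xi g=0$.

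Conversely, if $\mathcal{L}_\xi g=0$, the same identity gives $g(X,(\mathcal{L}_\xi\varphi)Y)=0$ for all $X$, hence $\mathcal{L}_\xi\varphi=0$.
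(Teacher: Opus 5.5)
Your first claim and the Killing equivalence are fine. $d\eta(\varphi X,Y)=g(\varphi X,\varphi Y)$ is symmetric, and you do not even need $Q$ for this. Lie-differentiating \eqref{2.2b} to get $(\mathcal{L}_\xi g)(X,\varphi Y)=-g(X,(\mathcal{L}_\xi\varphi)Y)$, then using $\varphi(TM)=\ker\eta$ and $(\mathcal{L}_\xi g)(\xi,\xi)=0$, is a complete argument. The gap is \eqref{3.1AA}, which you never derive: the decisive step is deferred as ``bookkeeping'', and neither route you outline reaches it.

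The first route is circular. Insert $\nabla_X\xi=\nabla_\xi X-[\xi,X]$ into $g((\nabla_\xi\varphi)X,Y)=g(\varphi Y,\nabla_X\xi)-g(\varphi X,\nabla_Y\xi)$ and use $g(\varphi Y,\nabla_\xi X)-g(\varphi X,\nabla_\xi Y)=g((\nabla_\xi\varphi)X,Y)-\xi(g(\varphi X,Y))$; everything collapses to $\mathcal{L}_\xi d\eta=0$. The Koszul route, carried out, yields only
\[
 2\,g((\nabla_\xi\varphi)X,Y)=g((\mathcal{L}_\xi\varphi)X,Y)-g(X,(\mathcal{L}_\xi\varphi)Y),
\]
i.e.\ \eqref{E-31}. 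At this point no ``non-$\widetilde Q$ terms'' cancel: even when $Q=I$, the vanishing of the right-hand side is the nontrivial fact that $h$ is self-adjoint. You cannot invoke ``the classical identity $\nabla_\xi\varphi=0$'' for an ``$I$-part'', since $(\varphi,g)$ is not contact metric and the classical proof uses $\varphi^2=-I+\eta\otimes\xi$ non-additively.

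The missing idea is a weak analogue of the self-adjointness of $h$. Your ingredient ``differentiate \eqref{2.2} along $\xi$'' is the right one only in its Lie-derivative form, combined with $\mathcal{L}_\xi d\eta=0$ and $\mathcal{L}_\xi[Q,\varphi]=0$. Put $P=\mathcal{L}_\xi\varphi$ and $S=\mathcal{L}_\xi Q$. Expand $\xi(g(\widetilde QX,\varphi Y))$ by the Leibniz rule, using $[\widetilde Q,\varphi]=0$ and your identity $(\mathcal{L}_\xi g)(U,\varphi V)=-g(U,PV)$. The right side of \eqref{3.1AA} then equals $g(\widetilde QX,PY)-g(PX,\widetilde QY)-g(SX,\varphi Y)$, so \eqref{3.1AA} is equivalent to
\[
 g(PX,QY)-g(QX,PY)=-g(SX,\varphi Y).
\]
To prove this, apply $\mathcal{L}_\xi$ to \eqref{2.2}. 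Using $\mathcal{L}_\xi\eta=0$, $(\mathcal{L}_\xi g)(\cdot,\xi)=0$ and $QY=-\varphi^2Y+\eta(Y)\xi$, you get $g(PX,\varphi Y)=g(X,P\varphi Y)+g(X,SY)$. Replace $Y$ by $\varphi Y$ (using $P\xi=0$ and $\eta\circ P=0$) and use $PQ-QP=S\varphi-\varphi S$, the Lie derivative of $[Q,\varphi]=0$. This gives $g(PX,QY)-g(QX,PY)=g(\varphi X,SY)$. Finally, read the same identity as $g(U,SV)=g(PU,\varphi V)-g(U,P\varphi V)$ with $(U,V)=(\varphi X,Y)$ and with $(U,V)=(\varphi Y,X)$. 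Adding the two shows $g(\varphi X,SY)+g(\varphi Y,SX)=0$, which finishes the proof.

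For orientation, the paper takes this proposition from \cite{rov-119} without proof. There, \eqref{3.1AA} is simply \eqref{Eq-nabla-phi-N5} at $X=\xi$: $\varphi\xi=0$ and $\widetilde Q\xi=0$ leave only three terms of $\mathcal{N}^{(5)}(\xi,X,Y)$. So the real work sits in that general Koszul-type formula, which your sketch would otherwise have to reproduce.
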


For the sake of abbreviation, we have set
\begin{align}\label{Eq-4.1}
 h = \tfrac{1}{2}\,\mathcal{L}_{\xi}\,\varphi,\qquad
 \widetilde h:=Q^{-1} h.
\end{align}

Using $X=\xi$ in \eqref{3.1AA}, gives $(\nabla_\xi\,\varphi)\,\xi=0$.
By \eqref{E-Killing}, we obtain
\begin{align*}
 2\,h X
 = (\nabla_{\xi}\,\varphi)X
 +\varphi\nabla_X\,\xi -\nabla_{\varphi X}\,\xi.
\end{align*}
Hence, $h\,\xi=0$ is true, and the distribution $\ker\eta$ of a w.c.m. manifold is $h$-invariant: $2\,g(hX,\xi)=0$ for $X\in\ker\eta$.
Note that $\widetilde h\,\xi=0$ and~$\widetilde h\circ\eta=0$.
The~following lemma
generalizes the properties of a contact metric manifold such that the tensor $h$ is self-adjoint and anti-commutes with~$\varphi$.

\begin{Proposition}\label{C-2cond}
Let a w.c.m. manifold satisfy the condition \eqref{Eq-2assump}, then $h$ is a self-adjoint traceless $(1,1)$-tensor that anti-commutes with $\varphi$ and commutes with $Q$; in addition,
$\nabla_{\xi}\,{Q}=\nabla_{\xi}\,\varphi=0$,
${\rm trace}\,h=0$,
and
\begin{equation}\label{E-xi-n}
 \nabla\,\xi = -\varphi - \varphi\,\widetilde h.
\end{equation}
A w.c.m. manifold
is weak K-contact if and only if, see \cite{rov-RS23},
\begin{equation}\label{E-xi-nK}
 \nabla\,\xi = -\varphi .
\end{equation}
\end{Proposition}

\begin{proof}
 On a w.c.m. manifold, the tensor $h$ and its conjugate $(1,1)$-tensor $h^*$ satisfy, see \cite{rov-119},
\begin{align}
\label{E-31}
 h-h^* & =  \nabla_{\xi}\,\varphi ,\\
 \label{E-31A}
 h\varphi+\varphi h &= -\tfrac12 \mathcal{L}_\xi\,Q
 ,\\
 \label{E-31B}
 h Q - Q h &= \tfrac12\,[\varphi, \mathcal{L}_\xi\,Q], \\
\label{E-30}
 Q\,\nabla \xi &= -\varphi(Q + h^*).
\end{align}
By conditions, \eqref{E-31A} and \eqref{E-31B}, $h$ anti-commutes with $\varphi$ and commutes with $Q$. The~same is true for $h^*$.
Since $Q$ is a self-adjoint positive definite (1,1)-tensor, the tensor $h$ is also self-adjoint, that is, $h=h^*$. Hence, \eqref{E-31} gives
$\nabla_{\xi}\,\varphi=0$.
By \eqref{E-30} and \eqref{P-6}, we have
 $\nabla \xi= - \varphi - \varphi\, Q^{-1} h^*$.
From this, $h=h^*$ we obtain \eqref{E-xi-n}.
Using \eqref{E-xi-n}, we get $\nabla_{\xi}\,{Q}=0$:
\begin{align*}
 0 = (\mathcal{L}_{\xi}\,{Q})X
 = [\xi, QX] - Q[\xi, X]
 = (\nabla_{\xi}\,{Q})X +Q\nabla_X\,\xi
 -\nabla_{QX}\,\xi
  = (\nabla_{\xi}\,{Q})X.
\end{align*}
If $h X=\lambda X$, then using $h\varphi = - \varphi h$
(by assumptions and \eqref{E-31A}),
we get $h \varphi X=-\lambda \varphi X$.
Thus, if $\lambda$ is an eigenvalue of $h$, then $-\lambda$ is also an eigenvalue of $h$; hence, ${\rm trace}\,h=0$.
By the above and Proposition~\ref{thm6.2},
a w.c.m. manifold is weak K-contact if and only if \eqref{E-xi-nK} is true.
\end{proof}

Since $\nabla\,I=0$, we get
$\nabla\,\widetilde Q =\nabla\,Q$
and $\mathcal{L}_{\xi}\,\widetilde{Q}=\mathcal{L}_{\xi}\,{Q}$.
By this, Proposition~\ref{C-2cond}  and
\eqref{Eq-2assump}-\eqref{E-nS-10}, we find
\begin{align}\label{E-nS-10b}
 (\nabla_X Q)Y
= -\eta(Y)\widetilde Q\nabla_X\,\xi
 - g(\widetilde Q\nabla_X\,\xi,Y) \,\xi .
\end{align}


The Nijenhuis torsion of $\varphi$ is given by
 $N_\varphi(X,Y) =
 {\varphi}^2 [X,Y] + [{\varphi} X, {\varphi} Y] - {\varphi}[{\varphi} X,Y] - {\varphi}[X,{\varphi} Y]$.

\begin{Lemma}[see Corollary~2.1 in \cite{rov-119}]
For a w.c.m.\,structure, we have
\begin{align}\label{Eq-nabla-phi-N5}
 2\,g((\nabla_X \varphi )Y,Z) =g(\mathcal{N}^{(1)}(Y,Z),\varphi X)
 + 2\,g(\varphi X,\varphi Y)\,\eta(Z)
 -2\,g(\varphi X,\varphi Z)\,\eta(Y) +\mathcal{N}^{(5)}(X,Y,Z) ,
\end{align}
where
$\mathcal{N}^{(1)}:=N_\varphi+2\,d\eta\otimes\xi$, and the $(0,3)$-tensor ${\cal N}^{\,(5)}$ is given by
\begin{align*}
\notag
 {\cal N}^{\,(5)}(X,Y,Z) =&\,\varphi Z(g(X, \widetilde QY))
 -\varphi Y(g(X, \widetilde QZ))
 -X(g(\varphi Z, \widetilde QY))
 +g([X, \varphi Z], \widetilde QY)\\
 & -\,g([X,\varphi Y], \widetilde QZ) +g([Y,\varphi Z] -[Z, \varphi Y] - \varphi[Y,Z],\ \widetilde Q X).
\end{align*}
\end{Lemma}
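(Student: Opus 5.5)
The plan is a direct Koszul-type computation, organized so that the classical contact-metric identity (the case $\widetilde Q=0$, see \cite[Lemma~6.1]{blair2010riemannian}) is reproduced verbatim and every deviation is proportional to $\widetilde Q$. First, by skew-symmetry of $\varphi$,
\[
 g((\nabla_X\varphi)Y,Z)=g(\nabla_X(\varphi Y),Z)+g(\nabla_XY,\varphi Z).
\]
I apply the Koszul formula to both terms on the right. This gives six derivative terms, $X(g(\varphi Y,Z))$, $\varphi Y(g(X,Z))$, $Z(g(X,\varphi Y))$, $X(g(Y,\varphi Z))$, $Y(g(X,\varphi Z))$, $\varphi Z(g(X,Y))$, and six bracket terms paired with $X,Y,Z,\varphi Y,\varphi Z$. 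The derivative terms containing $F$ are then rewritten through $dF=0$, i.e.\ \eqref{dPhi} written in its invariant form with brackets, together with $2\,d\eta(U,V)=2\,g(U,\varphi V)$ from \eqref{2.2b}.

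Second, I expand $g(\mathcal N^{(1)}(Y,Z),\varphi X)=g(N_\varphi(Y,Z),\varphi X)$; the term $2\,d\eta(Y,Z)\,g(\xi,\varphi X)$ vanishes since $\varphi\xi=0$. The four terms of $N_\varphi$ are treated as follows.
\begin{itemize}
\item The term $g(\varphi^2[Y,Z],\varphi X)$ is rewritten via \eqref{2.2} as $-g(\varphi[Y,Z],QX)$.
\item The terms $-g(\varphi[\varphi Y,Z],\varphi X)$ and $-g(\varphi[Y,\varphi Z],\varphi X)$ are rewritten via \eqref{2.2} as $-g([\varphi Y,Z],QX)+\eta([\varphi Y,Z])\,\eta(X)$, and similarly for the other.
\item The term $g([\varphi Y,\varphi Z],\varphi X)$ is kept as it is.
\end{itemize}
Now substitute $Q=I+\widetilde Q$ everywhere. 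The $I$-parts give exactly the terms of the classical proof. The $\eta$-parts, via $\eta([U,V])=-2\,d\eta(U,V)+U\eta(V)-V\eta(U)$ and \eqref{2.2} once more, assemble into $2\,g(\varphi X,\varphi Y)\eta(Z)-2\,g(\varphi X,\varphi Z)\eta(Y)$. This is where $g(\varphi X,\varphi Y)$ appears rather than $g(X,Y)-\eta\eta$.

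Third, the Koszul derivative terms of the form $\varphi Z(g(X,Y))$ and $\varphi Y(g(X,Z))$ do not cancel by themselves. I rewrite them through \eqref{2.2} as $\varphi Z(g(X,QY))$ minus an $\eta\eta$-term minus $\varphi Z(g(X,\widetilde QY))$, and likewise $X(g(\varphi Z,\varphi Y))$ through $X(g(Z,QY))$. In each case one part matches the classical cancellation and the remainder is one of the derivative terms $\varphi Z(g(X,\widetilde QY))$, $-\varphi Y(g(X,\widetilde QZ))$, $-X(g(\varphi Z,\widetilde QY))$ of $\mathcal N^{(5)}$. Similarly, the bracket terms $g([X,\varphi Z],\varphi^2Y)$-type that arise when $\varphi$ is moved across the metric produce $g([X,\varphi Z],\widetilde QY)-g([X,\varphi Y],\widetilde QZ)$. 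The leftover $\widetilde Q X$-terms from step two give $g([Y,\varphi Z]-[Z,\varphi Y]-\varphi[Y,Z],\widetilde QX)$. Collecting everything yields \eqref{Eq-nabla-phi-N5}.

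The main obstacle is purely the bookkeeping in the third step: verifying that, after splitting $Q=I+\widetilde Q$, every $I$-term cancels exactly as in the contact case and every $\widetilde Q$-term lands in $\mathcal N^{(5)}$ with the correct sign, with no stray terms. I would control this by first performing the computation with $\widetilde Q=0$ to fix the classical cancellation pattern, and then tracking only the linear $\widetilde Q$-corrections. It is also worth checking that $\mathcal N^{(5)}$ is tensorial, which follows because $\widetilde Q\xi=0$ and $\eta\circ\widetilde Q=0$ by \eqref{P-6}; this tensoriality check serves as a consistency check on the signs.
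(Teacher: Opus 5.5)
Your plan is correct and is essentially the argument behind the cited Corollary~2.1 of \cite{rov-119}; the paper itself quotes that result without proof. The argument is Blair's Koszul computation for Lemma~6.1 with $Q=I+\widetilde Q$ tracked, and it uses $dF=0$ at $(X,Y,Z)$ and also at $(X,\varphi Y,\varphi Z)$. You should state the second application explicitly: it is what eliminates $g([\varphi Y,\varphi Z],\varphi X)$. Through $F(U,\varphi V)=-g(U,QV)+\eta(U)\eta(V)$ it also produces the derivative terms and the $[X,\varphi\,\cdot\,]$-bracket terms of $\mathcal N^{(5)}$. Combined with the $\eta(X)$-terms of $N_\varphi$ (which alone do not suffice), it produces $2g(\varphi X,\varphi Y)\eta(Z)-2g(\varphi X,\varphi Z)\eta(Y)$.

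There is one sign slip. By \eqref{2.2}, $g(\varphi^2[Y,Z],\varphi X)=+\,g(\varphi[Y,Z],QX)$, not $-g(\varphi[Y,Z],QX)$. Only with the plus sign does the $I$-part match the classical terms and do the $\widetilde QX$-terms assemble into $g([Y,\varphi Z]-[Z,\varphi Y]-\varphi[Y,Z],\widetilde QX)$, as you claim.
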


The new tensor $\mathcal{N}^{\,(5)}$ complements the series of well-known tensors $\mathcal{N}^{\,(i)}\ (i=1,2,3,4)$, see~\cite{blair2010riemannian}.

We derive
the covariant derivative of the tensors $\varphi$ and $Q$ for a w.c.m.~manifold satisfying
\eqref{Eq-2assump}.

\begin{Proposition}\label{L-cond-equiv}
For a w.c.m.~manifold satisfying \eqref{Eq-2assump}, the property \eqref{E-1k-0-nullity} is equivalent to the following:
\begin{align}\label{E-nabla=phi-nullity}
 (\nabla_X\,\varphi)Y & = \eta(Y)\varphi^2X -g(X,\varphi^2Y)\xi .
\end{align}
If any of these conditions holds, then the structure is weak $K$-contact and
the equality is true,
\begin{align}\label{E-nS-10K}
 (\nabla_X Q)Y = \eta(Y)\widetilde Q \varphi X
 + g(\widetilde Q \varphi X, Y)\,\xi .
 \end{align}
\end{Proposition}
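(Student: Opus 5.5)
We prove the two implications separately and then derive the weak $K$-contact property and \eqref{E-nS-10K}. Throughout we use Proposition~\ref{C-2cond}: under \eqref{Eq-2assump} the tensor $h$ is self-adjoint, commutes with $Q$ and anti-commutes with $\varphi$, we have $\nabla_\xi\varphi=0$, and $\nabla\xi=-\varphi-\varphi\widetilde h$ by \eqref{E-xi-n}.

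\emph{From \eqref{E-nabla=phi-nullity} to \eqref{E-1k-0-nullity}.} Put $Y=\xi$ in \eqref{E-nabla=phi-nullity}. Since $\varphi\xi=0$, we get $(\nabla_X\varphi)\xi=-\varphi\nabla_X\xi$, while the right-hand side equals $\varphi^2X$. Hence $\varphi(\nabla_X\xi+\varphi X)=0$. Because $\nabla_X\xi\perp\xi$ and $\varphi$ is injective on $\ker\eta$, this gives $\nabla_X\xi=-\varphi X$. This is \eqref{E-xi-nK}, so the structure is weak $K$-contact and $\widetilde h=0$. Next compute $R_{X,Y}\xi=-(\nabla_X\varphi)Y+(\nabla_Y\varphi)X$ and substitute \eqref{E-nabla=phi-nullity}. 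The $\xi$-terms cancel because $g(X,\varphi^2Y)$ is symmetric, and we are left with $\eta(X)\varphi^2Y-\eta(Y)\varphi^2X$, which is \eqref{E-1k-0-nullity}.

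\emph{From \eqref{E-1k-0-nullity} to \eqref{E-nabla=phi-nullity}.}
\begin{enumerate}
\item Show that $\xi$ is Killing. Put $X=\xi$ in \eqref{E-1k-0-nullity} to get $R_{\xi,Y}\xi=\varphi^2Y$ for $Y\in\ker\eta$. From \eqref{E-xi-n}, $\nabla_\xi\varphi=0$ and $\nabla_\xi h=\nabla_\xi(Q\widetilde h)$, one computes the Jacobi operator along $\xi$; it should take the form $R_{\xi,Y}\xi=\varphi^2 Y$ plus a correction of the type $-\varphi^2\widetilde h^2 Y$ (up to a $\nabla_\xi\widetilde h$ term, which is self-adjoint with the opposite symmetry under $\varphi$). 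Comparing the parts that commute and anti-commute with $\varphi$ gives $\varphi^2\widetilde h^2=0$. Since $\widetilde h=Q^{-1}h$ with $h$ self-adjoint and commuting with $Q$, this forces $h=0$. By Proposition~\ref{thm6.2}, $\xi$ is Killing, so $\nabla\xi=-\varphi$.
\item Derive the formula for $\nabla\varphi$. With $\nabla\xi=-\varphi$, the curvature becomes $R_{X,Y}\xi=-(\nabla_X\varphi)Y+(\nabla_Y\varphi)X$. Take the inner product with $Z$ and use that $F$ is closed, \eqref{dPhi}, together with \eqref{E-nabla-phi-Phi}. As in the classical proof of Blair's Theorem~6.3, this lets us solve for $g((\nabla_X\varphi)Y,Z)$ algebraically: the skew-symmetry of $\nabla_X F$ in $(Y,Z)$ combined with the cyclic identity recovers the full tensor from its antisymmetrization in $(X,Y)$.
\item Recover the $Q$-terms. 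The classical trick tensors with $\xi$, and here additional terms involving $\widetilde Q$ appear. We control them through \eqref{Eq-nabla-phi-N5}. The aim is to prove $\mathcal N^{(5)}=0$ and the needed symmetries from $\nabla\xi=-\varphi$ and \eqref{Eq-2assump}. Alternatively, we can decompose $X,Y,Z$ into $\xi$- and $\ker\eta$-components. The $\xi$-components are fixed by $\nabla\xi=-\varphi$, which gives $g((\nabla_X\varphi)Y,\xi)=-g(\varphi Y,\nabla_X\xi)=g(\varphi Y,\varphi X)=-g(X,\varphi^2Y)$. For the $\ker\eta$-components we must show $g((\nabla_X\varphi)Y,Z)=0$ when $X,Y,Z\in\ker\eta$, using the closed $F$ and the curvature identity.
\end{enumerate}
We expect step 3 to be the main obstacle, since $\varphi^2\neq -I$ on $\ker\eta$ and the classical algebraic argument does not transfer verbatim.

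\emph{Formula \eqref{E-nS-10K}.} Differentiate $Q=-\varphi^2+\eta\otimes\xi$:
\[
(\nabla_XQ)Y=-(\nabla_X\varphi)\varphi Y-\varphi(\nabla_X\varphi)Y+(\nabla_X\eta)(Y)\,\xi+\eta(Y)\nabla_X\xi .
\]
Substitute \eqref{E-nabla=phi-nullity} and $\nabla\xi=-\varphi$. Then simplify using $\varphi^3=-\varphi Q$, which follows from $[Q,\varphi]=0$ and $\varphi\xi=0$, together with $\widetilde Q=Q-I$. This gives $\eta(Y)\widetilde Q\varphi X+g(\widetilde Q\varphi X,Y)\xi$, which agrees with \eqref{E-nS-10b} for $\nabla\xi=-\varphi$.
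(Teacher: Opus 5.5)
Your route is the paper's: you kill $h$ through the Jacobi operator along $\xi$ and then use $dF=0$. Your guessed formula is in fact exact, $R_{\xi,X}\xi=\varphi^2X-\varphi^2\widetilde h^{2}X-\varphi(\nabla_\xi\widetilde h)X$. Splitting it into the parts that commute and anti-commute with $\varphi$ is precisely what the combination $QR_{\xi,X}\xi-\varphi R_{\xi,\varphi X}\xi$ in \eqref{E-37} does. The converse implication and the computation of $\nabla Q$ are fine. Computing $R_{X,Y}\xi=-(\nabla_X\varphi)Y+(\nabla_Y\varphi)X$ directly from $\nabla\xi=-\varphi$ is a slightly more self-contained substitute for the paper's appeal to $R_{\xi,X}=\nabla_X\varphi$ and the Bianchi identity.

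However, as written the implication \eqref{E-1k-0-nullity}$\Rightarrow$\eqref{E-nabla=phi-nullity} is unfinished. You describe step~2 only in words, then declare an open step~3 to be the main obstacle and propose to attack it via \eqref{Eq-nabla-phi-N5} and $\mathcal N^{(5)}=0$. That obstacle is illusory. The $\mathcal N^{(5)}$ route also runs the wrong way: in the paper $\mathcal N^{(5)}=0$ is deduced \emph{from} \eqref{E-nabla=phi-nullity} (proof of Theorem~\ref{P-K-S}). You give no argument that it follows from $\nabla\xi=-\varphi$ and \eqref{Eq-2assump} alone.

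The missing line is the identity you allude to in step~2. It uses only \eqref{dPhi}, \eqref{E-nabla-phi-Phi} and the skew-symmetry of $\nabla_Z\varphi$, and never uses $\varphi^2=-I$:
\[
 g((\nabla_Z\varphi)Y,X)=g\big((\nabla_X\varphi)Y-(\nabla_Y\varphi)X,\,Z\big).
\]
Insert $(\nabla_X\varphi)Y-(\nabla_Y\varphi)X=-R_{X,Y}\xi=\eta(Y)\varphi^2X-\eta(X)\varphi^2Y$. This gives $g((\nabla_Z\varphi)Y,X)=\eta(Y)\,g(\varphi^2Z,X)-\eta(X)\,g(\varphi^2Y,Z)$, which is \eqref{E-nabla=phi-nullity} with $Z$ in place of $X$.

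The weak structure enters only through $\varphi^2=-Q+\eta\otimes\xi$ on the right-hand side. So there are no separate $\widetilde Q$-terms to recover, and the vanishing of $g((\nabla_X\varphi)Y,Z)$ for $X,Y,Z\in\ker\eta$ can be read off directly. Write this out and drop step~3 entirely; the proof is then complete and coincides with the paper's.
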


\begin{proof}
 \eqref{E-1k-0-nullity} $\Rightarrow$
\eqref{E-nabla=phi-nullity}.
For $Y\perp\xi$ and $X=\xi$, \eqref{E-1k-0-nullity} reads $R_{\,\xi,Y}\,\xi = -QY$.
By this and \eqref{E-37}, we get $h^2 Y=0$. Since $h$ is a self-adjoint (1,1)-tensor, $h=0$ is true, and by Proposition~\ref{thm6.2}, $\xi$ is a Killing vector field.
Therefore, $(\varphi,Q,\xi,\eta,g)$
is a weak $K$-contact structure, and by \cite[Proposition~2]{rov-RS23}, we~get
\begin{align}\label{E-R-varphi}
 R_{\,\xi,X}=\nabla_X\,\varphi.
\end{align}
From \eqref{E-1k-0-nullity}, \eqref{E-R-varphi} and the first Bianchi identity, we obtain the equality
\[
 (\nabla_X\varphi)Y-(\nabla_Y\varphi)X
 =\eta(Y)\varphi^2X - \eta(X)\varphi^2\,Y .
\]
By~\eqref{dPhi}, since $d{F}=0$, the equation above yields
\[
 g((\nabla_Z \varphi)Y, X) =
 g((\nabla_X \varphi)Y - (\nabla_Y \varphi)X, Z)
 =g( \eta(Y) \varphi^2X-\eta(X) \varphi^2\,Y, Z) ,
\]
which reduces~to \eqref{E-nabla=phi-nullity}.

\smallskip

\eqref{E-nabla=phi-nullity} $\Rightarrow$
\eqref{E-1k-0-nullity}.
The covariant derivative of $\varphi\,\xi=0$, using \eqref{E-xi-n},
\eqref{E-nabla=phi-nullity}, and the definition of $Q$, gives $\widetilde hX=0$, hence, $h = 0$. By Proposition~\ref{thm6.2}, $\xi$ is a Killing vector field,
and $(\varphi,Q,\xi,\eta,g)$
is a weak $K$-contact structure.
Therefore, we have \eqref{E-R-varphi}.
Applying this in conjunction with
\eqref{E-nabla=phi-nullity}, we have
\[
 R_{\,\xi,X}Y= \eta(Y)\varphi^2X -g(\varphi^2X,Y)\xi.
 \]
From this, using the first Bianchi identity,
we get~\eqref{E-1k-0-nullity}.

Let \eqref{E-nabla=phi-nullity} be true. Then, $(\varphi,Q,\xi,\eta,g)$ is a weak $K$-contact structure. By Proposition~\ref{C-2cond}, \eqref{E-xi-n} holds with $h=0$.
Taking the covariant derivative of $Q=-{\varphi}^2 + {\eta}\otimes {\xi}$ and using \eqref{E-nabla=phi-nullity} along with $\nabla\xi =-\varphi$, we~get,
\begin{align*}
 (\nabla_X Q)Y &= -(\nabla_X \varphi^2)Y + [g(\nabla_X \xi,Y)\xi+ \eta(Y)\nabla_X \xi] \\
 &  = -(\nabla_X \varphi)(\varphi Y)- f(\nabla_X \varphi)Y - g(\varphi X,Y) \xi - \eta(Y)\varphi X \\
 & =\eta(Y)Q\varphi X+g(QY,\varphi X)\xi  - g(\varphi X,Y) \xi - \eta(Y)\varphi X,
 \end{align*}
which is equivalent to \eqref{E-nS-10K}.
\end{proof}

\section{Partial Ricci Flow} \label{sec:02.2}

Let $(M,g)$ be a Riemannian manifold equipped with a unit geodesic vector field $\xi$,
and ${\cal D}$ be an orthogonal distribution. 
A self-adjoint (1,1)-tensor
$R_\xi: X\to R_{X,\xi}\,\xi$ is called the Jacobi operator in the $\xi$-direction.
The \textit{partial Ricci flow} (PRF) of metrics has the view, see \cite[Section~5.3.7.2]{Rov-Wa-2021},
\begin{equation}\label{E-GF-Rmix-Phi}
 \oc g_t = -2\Ric_{\cal D}(g_t) +2\,g^\top_t ,
\end{equation}
where the (0,2)-tensor $\Ric_{\cal D}$ is dual to $R_\xi$,
$g^\top(X,Y):=g(X^\top,Y^\top)$,
$^\top$ is the $\cal D$-component of the vector,
and $\,\oc{\phantom.}\,$ denotes the $t$-derivative.
For a weak $K$-contact structure,
$(\varphi,Q,\xi,\eta,g)$, the
tensor $R_\xi$
is positive definite on $\cal D:=\ker\eta$,
\eqref{E-GF-Rmix-Phi} reduces to ODEs, see \cite{Rov-Wa-2021},
and
\begin{align}\label{E-PRF-fK}
 \oc R_\xi = 4\,R_\xi(R_\xi -\,{I}_{\ker\eta}) ,
\end{align}
where ${I}_{\ker\eta}: TM\to{\ker\eta}$ is the orthoprojector.
Thus, the $K$-contact structure ($R_\xi={I}_{\ker\eta}$) is a fixed point of~\eqref{E-GF-Rmix-Phi}.
By the theory of ODEs, there exists a unique solution
$R_{\xi,t}$ of \eqref{E-PRF-fK} for $t\in\mathbb{R}$;
thus, a solution $g_t$ of \eqref{E-GF-Rmix-Phi} exists for $t\in\mathbb{R}$ and it is unique.
The flow \eqref{E-GF-Rmix-Phi} preserves the directions of eigenvectors of $R_\xi$,
and each eigenvalue $\mu_i>0$ satisfies the ODE $\oc\mu_i=4\mu_i\,(\mu_i-1)$ with $\mu_i(0)>0$, see \cite[Section~5.3.7.2]{Rov-Wa-2021}.

\begin{theorem}[Theorem~5.72 in \cite{Rov-Wa-2021}]\label{T-PRF-g2b}
Let $M^{2n+1}(\varphi_0,Q_{\,0},\xi,\eta,g_0)$ be a weak $K$-contact manifold.
Then, there exist $\ker\eta$-adapted Riemannian metrics $g_t\ (t\in\mathbb{R})$
-- a solution of
\eqref{E-GF-Rmix-Phi} -- such that each $(\varphi_t,Q_{\,t},\xi,\eta,g_t)$ is a weak $K$-contact structure on $M$ with structural tensors defined on 
${\ker\eta}$ as
\begin{equation}\label{E-Q-phi}
 Q_{\,t}|_{\ker\eta}=R_{\xi,t}\,,
 \quad
 \varphi_t\,|_{\,{\ker\eta}}=T_{\xi}^\sharp(t) ,
\end{equation}
where the operator ${T}^{\sharp}_\xi:{\ker\eta}\to{\ker\eta}$ is defined by
$g({T}^{\sharp}_\xi X,Y)=
\tfrac12\,\eta([X,Y])$.
Moreover,~$g_t$  converges exponentially fast, as $t\to-\infty$, to a limit metric $\hat g$
with $\widehat R_\xi={I}_{\ker\eta}$ that provides an $K$-contact structure $(\hat \varphi,\xi,\eta,\hat g)$
on $M$.
\end{theorem}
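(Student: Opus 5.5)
The plan is to make the partial Ricci flow \eqref{E-GF-Rmix-Phi} an explicit linear ODE for a $\ker\eta$-adapted metric and to read off everything from the eigenvalues of $R_\xi$. I will look for $g_t$ with $g_t(\xi,\cdot)=\eta$ for all $t$ and $g_t|_{\ker\eta}=g_0(A_t\,\cdot,\cdot)$. Here $A_t$ is a $g_0$-self-adjoint, positive, $\xi$-invariant endomorphism of $\ker\eta$ with $A_0=I_{\ker\eta}$. On $\ker\eta$ I define $\varphi_t:=T^\sharp_\xi(t)$ by $g_t(\varphi_tX,Y)=\tfrac12\eta([X,Y])=d\eta(X,Y)$, extend it by $\varphi_t\xi=0$, and set $Q_t:=-\varphi_t^2+\eta\otimes\xi$. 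With these choices \eqref{2.2} and \eqref{2.2b} hold automatically. Since $g_t(\varphi_tX,Y)=d\eta(X,Y)$ is independent of $t$, the identity $\varphi_t=A_t^{-1}\varphi_0$ holds on $\ker\eta$. Hence each $(\varphi_t,Q_t,\xi,\eta,g_t)$ is a w.c.m.\ structure as long as $A_t$ stays positive definite. I will also note that $\xi$ remains a unit geodesic field for every $g_t$, which the flow requires.

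The first key step is to show that the weak $K$-contact property is preserved. The data $g_0$, $\varphi_0$, $Q_0$ are $\mathcal L_\xi$-invariant (Proposition~\ref{thm6.2}). The right-hand side of \eqref{E-GF-Rmix-Phi} is built naturally from $g_t$ and $\xi$, so the flow commutes with the flow of $\xi$. By uniqueness of solutions, $\mathcal L_\xi g_t=0$ for all $t$, so $\xi$ stays Killing and each structure is weak $K$-contact.

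The second step identifies the Jacobi operator with the structure tensor. For a weak $K$-contact structure, \eqref{E-xi-nK} gives $\nabla\xi=-\varphi$, and \eqref{E-R-varphi} gives $R_{\xi,X}=\nabla_X\varphi$. Together with $(\nabla_X\varphi)\xi=-\varphi\nabla_X\xi=\varphi^2X$, this yields $R_{X,\xi}\xi=QX$ for $X\in\ker\eta$, that is, $R_{\xi,t}=Q_t|_{\ker\eta}$. This is the first relation in \eqref{E-Q-phi}; the second is the definition of $\varphi_t$. The flow is then equivalent to $\oc A_t=-2A_t(Q_t-I)$ on $\ker\eta$ with $Q_t=-A_t^{-1}\varphi_0A_t^{-1}\varphi_0$, which should be differentiated along the flow to recover \eqref{E-PRF-fK}. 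Because $[Q,\varphi]=0$ and everything commutes with $R_\xi$, the eigen-distributions of $R_{\xi,0}=Q_0|_{\ker\eta}$ are preserved. Each eigenvalue then obeys $\oc\mu=4\mu(\mu-1)$, whose solution is
\[
\mu(t)=\bigl(1-(1-\mu(0)^{-1})e^{4t}\bigr)^{-1}.
\]

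The final step is the convergence as $t\to-\infty$. The solution above gives $\mu(t)-1=O(e^{4t})$, so $Q_t\to I$ exponentially fast. The corresponding $A_t$, and hence $g_t$ and $\varphi_t$, converge exponentially to a limit $\hat g$, $\hat\varphi$ with $\widehat R_\xi=I_{\ker\eta}$, $\hat Q=I$. The limit is therefore a contact metric structure with $\xi$ Killing, i.e.\ a $K$-contact structure.

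The main obstacle is the exact bookkeeping in the second step: deriving \eqref{E-PRF-fK} from \eqref{E-GF-Rmix-Phi} via $\varphi_t=A_t^{-1}\varphi_0$ with the correct constants. A related issue is the claimed existence for all $t\in\mathbb R$. For $t\le0$ the formula for $\mu(t)$ is harmless. For $t>0$, eigenvalues with $\mu(0)>1$ blow up in finite time unless the normalization term $2g^\top_t$ compensates. I would first check on a Berger-sphere-type example that the sign and scaling conventions make $A_t$ positive for all $t$. If they do not, I would state the result on the maximal interval containing $(-\infty,0]$, which suffices for the limit statement.
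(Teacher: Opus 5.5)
Your plan follows the argument the paper relies on. The paper only cites this theorem, but its sketch in Section~\ref{sec:02.2} is exactly your reduction of \eqref{E-GF-Rmix-Phi} to the pointwise ODE \eqref{E-PRF-fK} via $R_\xi=Q|_{\ker\eta}$, with preserved eigen-directions, $\dot\mu=4\mu(\mu-1)$ and $\mu\to1$ as $t\to-\infty$.

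\textbf{The gap: the order of your first two steps is circular.} You obtain $\mathcal L_\xi g_t=0$ from ``uniqueness of solutions'' of the flow. But \eqref{E-GF-Rmix-Phi} involves the curvature of $g_t$ and is a degenerate PDE with no uniqueness theorem at hand. The only uniqueness available is for your ODE in $A_t$. That ODE is equivalent to the flow only once $R_{\xi,t}=Q_t|_{\ker\eta}$ is known, and this uses $\nabla\xi=-\varphi_t$, i.e.\ the weak $K$-contact property you are trying to prove.

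The fix is to reverse the order. First define $A_t$ by solving $\dot A_t=2A_t(I-Q_t)$, with $Q_t=-A_t^{-1}\varphi_0A_t^{-1}\varphi_0$, on $\ker\eta$. Since $[Q_0,\varphi_0]=0$, the ansatz $A_t=a_t(Q_0)$ closes up and gives
\[
 A_t^2=e^{4t}I_{\ker\eta}+(1-e^{4t})\,Q_0|_{\ker\eta},\qquad
 \varphi_t=A_t^{-1}\varphi_0,\qquad
 Q_t|_{\ker\eta}=A_t^{-2}Q_0 .
\]
Then argue in this order:
\begin{itemize}
\item[(a)] $\mathcal L_\xi A_t=0$ follows from $\mathcal L_\xi Q_0=\mathcal L_\xi g_0=0$, so $\xi$ is $g_t$-Killing and each structure is weak $K$-contact.
\item[(b)] Hence $R_{\xi,t}=Q_t|_{\ker\eta}$.
\item[(c)] Only now is $g_t$ a solution of \eqref{E-GF-Rmix-Phi}.
\end{itemize}
The closed formula also gives smoothness where eigenvalue multiplicities jump, and it gives the limit at once. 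Since $A_t^2-Q_0=e^{4t}(I-Q_0)$, you get $\hat g|_{\ker\eta}=g_0(Q_0^{1/2}\,\cdot,\cdot)$, $\hat\varphi=Q_0^{-1/2}\varphi_0$ and $\hat Q=I$. (A small sign slip: with the paper's conventions, $\tfrac12\eta([X,Y])=-d\eta(X,Y)$ on $\ker\eta$. This sign is what makes \eqref{2.2b} hold for $\varphi_t$.)

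\textbf{Your worry about $t>0$ is justified.} Here the statement is at fault, not your proof, and so is the paper's remark that \eqref{E-PRF-fK} has a solution for all $t\in\mathbb R$. The term $2g^\top_t$ is exactly what produces the $-1$ in $\dot\mu=4\mu(\mu-1)$, so nothing further compensates, and no Berger-sphere test is needed. Suppose $Q_0$ has an eigenvalue $\mu_0>1$ at some point. Then the eigenvalue $\mu_0-(\mu_0-1)e^{4t}$ of $A_t^2$ vanishes at $t=\tfrac14\ln\frac{\mu_0}{\mu_0-1}>0$. There $\mu(t)\to\infty$ and $g_t$ degenerates on $\ker\eta$. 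The paper's own example shows this: for $\lambda>1$ one has $C=1-\lambda^{-1}\in(0,1)$, and $\rho(t)$ vanishes at $t=-\tfrac14\ln C$.

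On the other hand, for $t\le0$ the operator $A_t^2$ is a convex combination of $I$ and $Q_0$, hence positive. Writing $A_t^2=Q_0+e^{4t}(I-Q_0)$ shows that the solution exists on all of $\mathbb R$ if and only if $Q_0|_{\ker\eta}\le I$. So your fallback, existence on a maximal interval containing $(-\infty,0]$, is the correct formulation, and it is all the convergence claim needs. The same caveat applies to the families indexed by $t\in\mathbb R$ in Theorems~\ref{P-K-S} and~\ref{T-04}(i).
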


By Theorem~\ref{T-PRF-g2b}, the PRF deforms a weak $K$-contact structure to a $K$-contact structure,
preserves $\xi$ and $\eta$,
the distribution ${\ker\eta}$ and its orthogonality to $\xi$.
For~example, using \eqref{E-Q-phi}, we have
\begin{align*}
 \oc g(X,\varphi Y) &=
 -2\,g(R_\xi X,\varphi Y) +2\,g(X,\varphi Y)
 =  -2\,g(\widetilde QX,\varphi Y),\\
  g(X, \oc \varphi Y) &
  = 2\,g(X, (R_\xi -I_{\ker\eta})\varphi Y)
  = 2\,g(X, \widetilde Q\,\varphi Y)
  = 2\,g(\widetilde QX,\varphi Y).
\end{align*}
Hence, $(g(X,\varphi Y))'=\oc g(X,\varphi Y)+g(X, \oc \varphi Y)=0$,
i.e., the PRF \eqref{E-GF-Rmix-Phi} preserves the equality $d\eta=F$.

\smallskip

We complement Theorem~\ref{T-PRF-g2b}  with the following result used in Theorem~\ref{P-K-S}.

\begin{Proposition}\label{L-PRF-2}
For a weak $K$-contact manifold satisfying  \eqref{Eq-2assump},
the property \eqref{E-nabla=phi-nullity}
(and hence, \eqref{E-1k-0-nullity}, see Proposition~\ref{L-cond-equiv})
remains invariant under the PRF \eqref{E-GF-Rmix-Phi}.
\end{Proposition}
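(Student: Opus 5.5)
We must show that if $(\varphi_0,Q_0,\xi,\eta,g_0)$ is weak $K$-contact, satisfies \eqref{Eq-2assump} and \eqref{E-nabla=phi-nullity}, then each structure $(\varphi_t,Q_t,\xi,\eta,g_t)$ produced by Theorem~\ref{T-PRF-g2b} also satisfies \eqref{E-nabla=phi-nullity}.

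The plan is to check that the difference tensor
\[
 P_t(X,Y,Z):=g_t((\nabla^t_X\varphi_t)Y,Z)-\eta(Y)g_t(\varphi_t^2X,Z)+g_t(X,\varphi_t^2Y)\eta(Z)
\]
stays zero. By Theorem~\ref{T-PRF-g2b}, $\xi$ and $\eta$ are fixed, $\ker\eta$ is $g_t$-orthogonal to $\xi$, and $g_t$ changes only on $\ker\eta$. Each $(\varphi_t,Q_t,\xi,\eta,g_t)$ is weak $K$-contact, so $\nabla^t\xi=-\varphi_t$ by \eqref{E-xi-nK}.

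\emph{Step 1: reduce to the horizontal part.} Using $\varphi_t\xi=0$, $\nabla^t\xi=-\varphi_t$ and $Q_t\xi=\xi$, one checks that $P_t(X,Y,Z)$ vanishes automatically whenever one argument equals $\xi$. For example, $(\nabla_X\varphi)\xi=-\varphi\nabla_X\xi=\varphi^2X$, which matches the right side of \eqref{E-nabla=phi-nullity} with $Y=\xi$. So it suffices to show that $g_t((\nabla^t_X\varphi_t)Y,Z)=0$ for $X,Y,Z\in\ker\eta$.

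\emph{Step 2: express this part via Koszul and closedness.} Since $F_t=d\eta$ is independent of $t$ (shown in Section~\ref{sec:02.2}), \eqref{E-nabla-phi-Phi} gives $g_t((\nabla^t_X\varphi_t)Y,Z)=(\nabla^t_XF)(Y,Z)$. This equals $X(F(Y,Z))-F(\nabla^t_XY,Z)-F(Y,\nabla^t_XZ)$. The $t$-dependence therefore enters only through the horizontal Levi-Civita connection, which is determined by the Koszul formula for $g_t|_{\ker\eta}$.

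\emph{Step 3: the key step, which I expect to be the main obstacle.} We need that the eigenstructure of $Q_0=R_{\xi,0}$ is parallel in horizontal directions. By \eqref{E-nS-10K}, $((\nabla^0_XQ_0)Y)^\top=0$ for $X,Y\in\ker\eta$, so horizontal transport preserves $Q_0$. Since $\mathcal{L}_\xi Q_0=0$, the eigenvalues $\mu_i$ of $Q_0$ are constant along $\xi$. I would first show they are locally constant on $M$ and the eigendistributions $E_i$ are $\nabla^0$-parallel in horizontal directions modulo $\xi$.

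The PRF acts eigenvaluewise by the ODE $\dot\mu_i=4\mu_i(\mu_i-1)$. Hence $g_t|_{E_i}=c_i(t)\,g_0|_{E_i}$ with constants $c_i(t)$, and $\varphi_t|_{E_i}=c_i(t)^{-1}\varphi_0|_{E_i}$. Then $g_t$ differs from $g_0$ by constant rescalings on the mutually orthogonal, horizontally parallel, $\varphi_0$-invariant pieces $E_i$. The Koszul formula then shows that $(\nabla^t_XY)^\top=(\nabla^0_XY)^\top$ for horizontal $X,Y$. Any bracket terms that leave $E_i$ only produce $\xi$-components, through $\eta([X,Y])=2F(X,Y)$.

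\emph{Step 4: conclude.} With the same horizontal connection, the expression in Step 2 is independent of $t$ and equals its value at $t=0$, which vanishes by \eqref{E-nabla=phi-nullity} for $g_0$. Together with Step 1, this gives \eqref{E-nabla=phi-nullity} for all $t$, and by Proposition~\ref{L-cond-equiv} also \eqref{E-1k-0-nullity}.

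An alternative to Step 3 would be a direct ODE argument: compute $\tfrac{d}{dt}P_t$ from $\dot g_t=-2g_t(\widetilde Q_t\cdot,\cdot)^\top$, show it is linear in $P_t$, and use uniqueness of the zero solution. I expect the same parallelism of $Q$ (from \eqref{E-nS-10K}) to be the crux of that approach as well.
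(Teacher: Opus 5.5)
Your argument is correct, but it takes a genuinely different route from the paper.

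\textbf{The paper's route.} The paper argues infinitesimally. It differentiates \eqref{E-nabla=phi-nullity} in $t$ and computes $\dot\nabla$ from \eqref{eq2G}, using $(\nabla_Z\dot g)(X,Y)=-2g((\nabla_ZQ)X,Y)$ and $\dot\varphi=2\widetilde Q\varphi$. It then uses \eqref{E-nS-10K} to check that the $t$-derivatives of the two sides, \eqref{E-PRF-cond1a} and \eqref{E-PRF-cond1b}, coincide. This needs no information about the spectrum of $Q$. However, \eqref{E-nS-10K} is invoked at the very instant where one differentiates. So it is a compatibility check, and turning it into invariance needs an ODE-uniqueness step of the kind you list as your alternative.

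\textbf{Your route.} You instead integrate the flow. For $Y\in\ker\eta$, \eqref{E-nS-10K} says exactly that $Q|_{\ker\eta}$ is parallel for the induced connection $\nabla^\top_XY=(\nabla_XY)^\top$ on the bundle $\ker\eta$, for every $X$ including $\xi$. Hence its eigenvalues are constant on $M$ and its eigenbundles $E_i$ are $\nabla^\top$-parallel. The PRF then acts by constant rescalings on the $E_i$, $F=d\eta$ stays fixed, and the horizontal part of the connection is unchanged. So the horizontal part of the identity is literally $t$-independent, while the components involving $\xi$ hold for any weak $K$-contact structure.

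\textbf{Comparison.} Your approach gives invariance outright, with no closing argument needed. It also gives an explicit description of the whole family, generalizing the paper's example with $Q_0=\lambda I$. The cost is that it relies on the quoted PRF facts: eigendirections are preserved, eigenvalues obey the ODE, and the solution is unique, so it coincides with the rescaled metric.

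\textbf{Points to tighten.}
\begin{itemize}
\item In Step 3, the cleanest justification of $(\nabla^t_XY)^\top=(\nabla^0_XY)^\top$ is the difference-tensor identity $2g_t(\nabla^t_XY-\nabla^0_XY,Z)=(\nabla^0_Xg_t)(Y,Z)+(\nabla^0_Yg_t)(X,Z)-(\nabla^0_Zg_t)(X,Y)$. Write $g_t=g_0(A_t\cdot,\cdot)$ with $A_t|_{\ker\eta}$ $\nabla^\top$-parallel and $A_t\xi=\xi$; then $\nabla^0g_t$ vanishes on horizontal triples.
\item Your remark that brackets leaving $E_i$ only produce $\xi$-components is not accurate as stated. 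For $X\in E_i$ and $Y\in E_j$ one has $[X,Y]^\top\in E_i\oplus E_j$, although this is harmless for the rescaling.
\item By \eqref{E-nabla-phi-Phi}, $g((\nabla_X\varphi)Y,Z)=-(\nabla_XF)(Y,Z)$. This sign slip is irrelevant here because you only need the expression to vanish.
\end{itemize}
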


\begin{proof}
In our case, $\oc g(X,Y)=-2\Ric_{\ker\eta}(X,Y) +2\,g^\top(X,Y)$.
Since $Q|_{\ker\eta}= {I}_{\ker\eta} +\widetilde Q$, where $\widetilde Q = Q-I$, by \eqref{E-Q-phi}, we have
 $\nabla_Y R_\xi =
 \nabla_Y \widetilde Q + \nabla_Y {I}_{\ker\eta}$.
Using this, $\nabla_Z\,\widetilde Q=\nabla_Z\,Q$ and \eqref{E-GF-Rmix-Phi}, we obtain
\begin{align*}
 (\nabla_Z\,\oc g)(X,Y) = -2\,g((\nabla_Z\, R_\xi)X, Y)
 +2\,g((\nabla_Z\,{I}_{\ker\eta})X, Y) = -2\,g((\nabla_Z\,Q)X,Y).
\end{align*}
We also have, see \cite[Section~5.3.7.2]{Rov-Wa-2021},
\begin{align}\label{E-PRF-fT}
 \oc{T}\,^{\sharp}_\xi =
 2\,(R_\xi - {I}_{\ker\eta}) {T}^{\sharp}_\xi .
\end{align}
In view of \eqref{E-PRF-fT} and \eqref{E-Q-phi}, we have
 $\oc\xi = \oc\eta = 0$
 and
 $\oc \varphi = \oc T\,\!_{\xi}^\sharp
 = 2\,(R_\xi - {I}_{\ker\eta}) {T}^{\sharp}_{\xi}$.
Using \eqref{E-PRF-fK}, \eqref{E-Q-phi} and
$\oc R_\xi = \oc Q + \oc{I}_{\ker\eta}=\oc Q$,
we also have
$ \oc{\varphi^2} = -\oc Q = -4\,R_\xi(R_\xi -\,{I}_{\ker\eta})$.

Let's calculate the evolution under PRF of the equality \eqref{E-nabla=phi-nullity}. We have
\begin{align} \label{Eq-ev}
 (\oc\nabla_X\,\varphi)Y
 +(\nabla_X\,\oc \varphi)Y
 =\eta(Y)\oc{\varphi^2}X
 -g(\oc{\varphi^2}\,Y,X)\xi
 -\oc g(\varphi^2Y,X)\xi .
\end{align}
Recall the general formula, e.g., \cite[Eq.~(4.8)]{Rov-Wa-2021},
\begin{equation}\label{eq2G}
 2\,g(\oc\nabla_X Y, Z) = (\nabla_X\,\oc g)(Y,Z)
 +(\nabla_Y\,\oc g)(X,Z)-(\nabla_Z\,\oc g)(X,Y).
\end{equation}
By the above, symmetry of the covariant derivative of $Q$, \eqref{E-nS-10K}
(see Proposition \ref{L-cond-equiv})
and \eqref{eq2G},
we have
\begin{align*}
\notag
 g((\oc\nabla_X\,\varphi)Y,Z)  &=
 g(\oc\nabla_X(\varphi Y),Z) + g(\oc\nabla_X Y,\varphi Z)\\
 & = \tfrac12\{(\nabla_X\,\oc g)(\varphi Y,Z)
 + (\nabla_{\varphi Y}\,\oc g)(X,Z) -(\nabla_Z\,\oc g)(X,\varphi Y)
 \notag\\
 & \quad +(\nabla_X\,\oc g)(Y,\varphi Z)
 +(\nabla_Y\,\oc g)(X,\varphi Z)
 -(\nabla_{\varphi Z}\,\oc g)(X,Y)\} \notag\\
 & = -\{g((\nabla_X Q)\varphi Y,Z)
 +g((\nabla_{\varphi Y} Q)X,Z)
 -g((\nabla_Z Q)X,\varphi Y) \notag \\
 &\quad
 +g((\nabla_X Q)Y, \varphi Z) + g((\nabla_Y Q)X,\varphi Z)
 -g((\nabla_{\varphi Z} Q)X,Y)\} \notag\\
 & =-2\,\eta(Y) g(\widetilde Q \varphi X, \varphi Z), \\
 g( (\nabla_X\,\oc\varphi )Y, Z)& = g(\nabla_X (\oc \varphi Y),Z) -g(\oc \varphi \nabla_X Y,Z) \notag \\
 & = 2\,g((\nabla_X Q)\varphi Y,Z)
 + 2\,g((R_\xi - {I}_{\ker\eta})
 (\nabla_X \varphi)Y,Z) \notag \\
&  = 2\,\eta(Z) g(\widetilde Q\varphi X, \varphi Y)
+ 2\, \eta(Y) g((R_\xi
- {I}_{\ker\eta})\varphi ^2X,Z) \notag \\
& = 2\,\eta(Z) g(\widetilde Q\varphi X, \varphi Y)
- 2\,\eta(Y) g(\widetilde Q\varphi  X,\varphi Z).
\end{align*}
Therefore, the inner product of the LHS of \eqref{Eq-ev} with $Z$
is as follows:
\begin{align}\label{E-PRF-cond1a}
 g( (\oc\nabla_X\,\varphi)Y
 +(\nabla_X\,\oc\varphi)Y, Z) =
 -4\,\eta(Y) g(\widetilde Q X, QZ)
 +2\,\eta(Z) g(\widetilde QX, QY).
\end{align}
Similarly, using $I_{\ker\eta}\,\widetilde Q=\widetilde Q$, \eqref{E-nS-10K} and \eqref{E-Q-phi}, for all $X,Y,Z\in\mathfrak{X}_M$ we calculate
\begin{align*}
 \eta(Y)g(\oc{\varphi ^2}X,Z) &=-4\, \eta(Y) \, g(R_\xi(R_\xi -\,{I}_{\ker\eta})X,Z)
 =-4\,\eta(Y) g(\widetilde Q X, QZ), \\
 -\eta(Z)g(\oc{\varphi ^2}Y,X)
 &=4\,\eta(Z)g(R_\xi(R_\xi-{I}_{\ker\eta})Y,X)
 = 4\,\eta(Z)g(Q^2Y - QY,X)
 = 4\,\eta(Z)g(\widetilde QY,QX), \\
 -\eta(Z)\oc g(\varphi ^2Y,X) &=
 2\,\eta(Z)\{g(R_\xi\,\varphi ^2 Y,X)
 -g^\top(\varphi^2Y,X)\}
 = 2\,\eta(Z)\{g(Q\varphi^2 Y,X)
 - g(\varphi^2Y, X)\}  \\
 & = - 2\,\eta(Z) g(\widetilde Q\varphi X, \varphi Y) = - 2\,\eta(Z) g(\widetilde QX, QY) .
\end{align*}
Therefore, the inner product of the RHS of \eqref{Eq-ev} with $Z$
is as follows:
\begin{align}\label{E-PRF-cond1b}
g(\eta(Y)\oc{\varphi ^2}X -g(\oc{\varphi ^2}\,Y,X)\xi
 -\oc g(\varphi ^2Y,X)\xi,\ Z) =
 -4\,\eta(Y) g(\widetilde Q X, QZ)
 +2\,\eta(Z) g(\widetilde QX, QY).
\end{align}
Since the RHS of \eqref{E-PRF-cond1a} and \eqref{E-PRF-cond1b}
coincide, \eqref{Eq-ev} holds.
Thus, \eqref{E-nabla=phi-nullity} is preserved under the PRF~\eqref{E-GF-Rmix-Phi}.
\end{proof}

\section{Proof of Main Results and Illustrative Examples}
\label{sec4}

In this section, we prove Theorems~\ref{P-K-S}--\ref{T-04b} and their corollaries and give illustrative examples.


\begin{proof}[Proof of Proposition~\ref{P-K-S}]
By Proposition \ref{L-cond-equiv}, \eqref{E-1k-0-nullity} and \eqref{E-nabla=phi-nullity} are equivalent; moreover, $M$ carries a weak $K$-contact structure.
Using the Levi-Civita connection $\nabla$ of $g$, one can rewrite the tensor $N_\varphi$ as
\begin{align}\label{4.NN}
 N_\varphi(X,Y) = ({\varphi}\nabla_Y{\varphi}
 -\nabla_{{\varphi} Y}{\varphi}) X
 -({\varphi}\nabla_X{\varphi}
 -\nabla_{{\varphi} X}{\varphi}) Y .
\end{align}
From
\eqref{4.NN} and \eqref{E-nabla=phi-nullity},
we~have
\begin{align*}
 g(\mathcal{N}^{(1)}(Y,Z),\varphi X)
 = g(\varphi (\nabla_Z \varphi) Y
 -(\nabla_{\varphi Z}\varphi ) Y
 -\varphi (\nabla_Y \varphi)Z
 +(\nabla_{\varphi Y}\varphi) Z,\ \varphi X)
 =0.
\end{align*}
Applying this in conjunction with
\eqref{E-nabla=phi-nullity} (see Proposition~\ref{L-cond-equiv}), we reduce \eqref{Eq-nabla-phi-N5} to the equality
${\cal N}^{\,(5)}=0$.

By Theorem~\ref{T-PRF-g2b} and Proposition~\ref{L-PRF-2}, there exists an $\ker\eta$-adapted family $(\varphi_t,Q_{\,t},\xi,\eta,g_t)$ of weak K-contact structures on $M$,
which converges exponentially fast, as $t\to-\infty$, to a limit K-contact structure
$(\hat\varphi,\xi,\eta,\hat g)$ satisfying~\eqref{E-1k-0-nullity}
and \eqref{E-nabla=phi-nullity}.
By well-known results, e.g., \cite[Proposition~7.6]{blair2010riemannian}, the structure
$(\hat\varphi,\xi,\eta,\hat g)$ is Sasakian.
\end{proof}

\begin{example}\rm
To illustrate Theorem~\ref{P-K-S},
we consider a
w.c.m.~manifold
$M^{2n+1}(\varphi_0,Q_0,\xi,\eta,g_0)$
with $Q_0 =\lambda\,I$ on $\ker\eta$ for some real positive $\lambda\ne1$,
satisfying the condition  \eqref{E-1k-0-nullity}.
By Theorem~\ref{T-PRF-g2b} in Section~\ref{sec:02.2},
and the theorem of uniqueness of a solution to ODE \eqref{E-PRF-fK}, we have a $\ker\eta$-conformal solution $R_{\xi,t}
=\mu(t)\,I_{\ker\eta}$.
Set $g^\top(X,Y)=g(X^\top,Y^\top)$.
Since $\oc I_{\ker\eta}=0$, we have the ODE $\oc\mu=4\mu\,(\mu-1)$ with $\mu(0)=\lambda>0$.
Thus, $\mu(t)=\frac{1}{1-C\,e^{4t}}\ (t\in\mathbb{R})$, where $C=1-\lambda^{-1}\ne0$.
Since our PRF \eqref{E-GF-Rmix-Phi} reduces to ODE, applying the theorem of uniqueness of a solution to ODE, we~have
$g\,\!^\top_t=\rho(t)\,g\,\!^\top_0$, where
$\rho(0)=1$ and
\begin{align*}
 &\oc g\,\!^\top_t =-2\mu(t)g^\top_t
 +2\,g^\top_t = 2\,(1-\mu(t))\,g^\top_t
 \implies \frac{d}{dt}\log \rho(t)=2\,(1-\mu(t))
 =  -\frac{2\,Ce^{4t}}{1-C e^{4t}} .
 \end{align*}
Thus, $g_t$ is conformal on $\ker\eta$ with the conformal factor equal to
$\rho(t)= \big(\frac{1-C e^{4t}} {1-C}\big)^{1/2}$.
By~the above,
\[
 \varphi_t=\varphi_0/\rho(t),\quad Q_{\,t}|_{\ker\eta}=\frac{1}{1-Ce^{4t}}\,I_{\ker\eta},\quad
 g_t=
\eta\otimes\eta +\rho(t)\,g^\top_0.
\]
By Theorem~\ref{P-K-S},
$(\varphi_t,Q_{\,t},\xi,\eta,g)$ $(t\in\mathbb{R})$,
is a~smooth family of weak $K$-contact structures on $M$ satisfying
\eqref{E-1k-0-nullity}, that converges exponentially fast, as $t\to-\infty$, to a Sasakian structure
$(\lambda^{-1/2}\varphi_0, \xi,\eta,\,
\eta\otimes\eta+\lambda^{1/2}g^\top_0)$.
\end{example}

\begin{proof}[Proof of Theorem~\ref{T-04}]
By Proposition~\ref{prop-5.1}(i), the eigenvalues of $\widetilde{h}$ are $\{0, \pm\sqrt{1-\kappa}\}$ and $\kappa\le1$.


$(i)$ Let $\kappa=1$, then $\widetilde{h}=h=0$.
Therefore, \eqref{E-k-mu-nullity} reduces to~\eqref{E-1k-0-nullity}.
By Proposition~\ref{C-2cond}, \eqref{E-xi-n} reduces to $\nabla\,{\xi}=-\varphi$,
and $(\varphi,Q,\xi,\eta,g)$ is a weak $K$-contact structure.
By Theorem~\ref{P-K-S}, there exists an $\ker\eta$-adapted fami\-ly $(\varphi_t,Q_{\,t},\xi,\eta,g_t)$ of weak K-contact structures on $M$,
which converges exponentially fast, as $t\to-\infty$, to a limit Sasakian structure
$(\hat\varphi,\xi,\eta,\hat g)$.

\smallskip

$(ii)$ Let $\kappa<1$.
By Proposition~\ref{prop-5.1}(ii), the contact distribution $\ker\eta$ decomposes into a direct sum
of mutually orthogonal eigen-distributions
${\cal D}_+$ and ${\cal D}_-$,
corresponding to the eigenvalues $\sqrt{1-\kappa}$ and $-\sqrt{1-\kappa}$ of $\tilde h$, respectively.
Let $X,Y\in{\cal D}_+$, or, ${\cal D}_-$, then using \eqref{E-xi-n} we get
 $g(\nabla_X\,\xi,Y)= -(1\pm\sqrt{1-\kappa})\,g(\varphi X,Y) = 0$.
Thus, $0=2\,g(QX,\varphi Y)= 2\,d\eta(X,Y)=- \eta([X,Y])$.
By~\eqref{E-k-mu-nullity}, $R_{X,Y}\,\xi=0$ for $X,Y \in\ker\eta$.
Using this for $X,Y,Z\in{\cal D}_+$, or, $X,Y,Z\in{\cal D}_-$, respectively, we have
\begin{align*}
 \notag
 0&
 = g(R_{X,Y}\,\xi,Z)
 = -(1\pm\sqrt{1-\kappa})\,(\nabla_Z{F})(X,Y)
 \mp 2\sqrt{1-\kappa}\,g(\varphi [X,Y],Z).
\end{align*}
Applying \eqref{Eq-phi02} and \eqref{Eq-phi001} (where \eqref{E-nS-10} is used) in the above relation yields
$g(\varphi [X,Y],Z)=0$. Using this and $\eta([X,Y])=0$, we get $[X,Y]\in {\cal D}_+$, or, $[X,Y]\in {\cal D}_-$\,, respectively, which shows that the distributions ${\cal D}_+$ and ${\cal D}_-$ are involutive.

Fix $X,Z\in{\cal D}_-$ and $Y\in {\cal D}_+$, then using the fact $R_{e_1,e_2}\,\xi=0\ (e_1, e_2 \in\ker\eta)$, we have
\begin{align*}
 0&= g(R_{X,Y}\,\xi ,Z) =
 (1+\sqrt{1-\kappa})\big\{ (\nabla_X {F})(Y,Z)
 + (\nabla_Y {F})(Z,X)\big\}
 - 2\,\sqrt{1-\kappa}\, g(\varphi \nabla_Y X,Z), \\
 & = (1+\sqrt{1-\kappa}) (\nabla_Z {F})(X,Y)- 2\,\sqrt{1-\kappa}\, g(\varphi \nabla_Y X,Z).
\end{align*}
Using \eqref{Eq-phi02}, gives
$(\nabla_Z {F})(X,Y)=0$, which implies $g(\nabla_Y X,\varphi Z)=0$ for $X,Z\in{\cal D}_+$ and $Y\in{\cal D}_-$.
Hence, $\nabla_Y X\
(X\in{\cal D}_+,\,Y\in{\cal D}_-)$
is orthogonal to ${\cal D}_-$.
Ta\-king  \(Y,Z\in{\cal D}_+\) and \(X\in{\cal D}_-\), and applying \eqref{E-xi-n}, gives
$g(\nabla_Y Z, \xi) = -g(\nabla_Y \xi, Z) = (1+ \sqrt{1-\kappa})g(\varphi Y, Z) = 0$
and
$g(\nabla_Y Z, X) = -g(Z, \nabla_Y X) = 0$.
By~ the above, \(\nabla_Y Z \perp {\cal D}_- \oplus  \{\xi\}\), hence
\({\cal D}_+\) is totally geodesic. Applying the same procedure for $X,Z\in{\cal D}_+$ and $Y\in {\cal D}_-$ and using \eqref{Eq-phi02}, we conclude that ${\cal D}_-$ is totally geodesic.
\end{proof}

\begin{proof}[Proof of Theorem~\ref{thm6}]
$(i)$ This follows from the conditions and Definition~\ref{D-02}.

$(ii)$  Note that $g'(X,\xi)=\eta(\xi)=g(X,\xi)$ and
$g'(X,Y^\top)={\lambda}\,g(X,Y^\top)$, where $Y^\top$ is the $\ker\eta$-component of $Y$.
Let $\nabla'$ and $\nabla$ be the Levi-Civita connections of the metrics $g'$ and $g$, respectively.
In the Koszul formula for $g'$,  we expand each term using
$g'(X,Y) = {\lambda}\,g(X,Y) + (1-{\lambda})\,\eta(X)\eta(Y)$, see \eqref{E-cont-lambda}, to obtain
\begin{align*}
 & 2\,g'(\nabla'_X Y, Z)
 = 2\,{\lambda}\,g(\nabla_X Y,Z)
 +2(1{-}{\lambda})\,\eta(\nabla_X Y) \eta(Z)
 +(1{-}{\lambda})\big\{(\nabla_X \eta)(Y)\eta(Z)  \\
 &+\eta(Y)(\nabla_X \eta)(Z)
 +(\nabla_Y\eta)(X)\eta(Z) +\eta(X)(\nabla_Y \eta)(Z) -(\nabla_Z \eta)(X)\eta(Y)-\eta(X)(\nabla_Z \eta)(Y)\big\}.
\end{align*}
Simplifying the above by using
the relation $(\nabla_X\,\eta)(Y)= g(\nabla_X\,\xi,Y) =-g(\varphi X +\varphi\,\widetilde h X,Y)$ and the fact $g'(\nabla_X Y,Z) = {\lambda}\,g(\nabla_X Y,Z)
 +(1-{\lambda})\,\eta(\nabla_X Y) \eta(Z) $, we have
\begin{align*}
 & 2\,g'(\nabla'_X Y, Z) = 2\,g'(\nabla_X Y, Z) +(1-{\lambda})\big\{-\eta(Z) g(\varphi X +\varphi\,\widetilde h X,Y)
 -\eta(Y) g(\varphi X +\varphi\,\widetilde h X,Z) \\
 & -\eta(Z) g(\varphi Y {+}\varphi\,\widetilde h Y,X) {-}\eta(X)\,g(\varphi Y {+}\varphi\,\widetilde h Y, Z)
 {+}\eta(Y)\,g(\varphi Z {+}\varphi\,\widetilde h Z,X) {+}\eta(X)\,g(\varphi Z {+}\varphi\,\widetilde h Z,Y) \big\}.
\end{align*}
With further simplification, we obtain
\begin{align}\label{E-nabla'}
 \nabla'_X Y =\nabla_XY+ (1-{\lambda})\,\big\{g(\widetilde hX,\varphi Y)\xi-\eta(Y) \varphi X-\eta(X) \varphi Y\big\}.
\end{align}
 From \eqref{E-k-mu-nullity} and the definition of $Q$, we have
$ R_{\,\xi,X}Y = \kappa\{ g(QX,Y)\xi
- \eta(Y)QX \}
 + \mu \{ g(hX,Y)\xi - \eta(Y)hX \}.$
Using the previous equation and \eqref{E-05b} gives
\begin{equation*}
(\nabla_{hX}\varphi)Y
= {\lambda}^4(1-\kappa )\big[ g(X,Y)\xi-\eta(Y)X \big]
+ {\lambda}^2\big[\eta(Y)hX - g(hX,Y)\xi\big].
\end{equation*}
Next, in the above equation, we substitute $hX$ in place of $X$ and use the equality
 $h^2X =(1-\kappa)\big\{Q^2X -\eta(X) \xi\big\},$
see the proof of Proposition~\ref{prop-5.1}, to obtain
\begin{equation}\label{Eq-varphi}
 (\nabla_X \varphi)Y = {\lambda}^2[g(X,Y)\xi-\eta(Y) X] +g(hX,Y)\xi-\eta(Y) hX.
\end{equation}
We set
 $A(X,Y) := g(\widetilde hX,\varphi Y)\xi - \eta(Y)\varphi X - \eta(X)\varphi Y$,
and using this along with \eqref{E-nabla'},  find
\[
 R'_{X,Y}\,\xi = R_{X,Y}\,\xi
 +(1-{\lambda})\big\{(\nabla_X A)(Y,\xi)
 -(\nabla_Y A)(X,\xi) +A(X,A(Y,\xi)) -A(Y,A(X,\xi)) \big\}.
\]
By \eqref{Eq-varphi}, we have the following expressions:
\begin{align*}
 (\nabla_X A)(Y,\xi) =& -2g(hX,Y)\xi -(2-\kappa) {\lambda}^2 g(X,Y)\xi
 +[(1-\kappa){\lambda}^2+1]\eta(X)\eta(Y)\xi\\
 & +2{\lambda}^2\eta(Y)X +2\eta(Y) hX ,\\
 A(X, A(Y,\xi)) =& -A(X, \varphi Y) = -g( hX, Y)\,\xi+\eta(X)\varphi^2 Y.
\end{align*}
Using these two equations and the definition of $\varphi'$,
$h = \tfrac12\mathcal{L}_{\xi}\,\varphi = {\lambda}\,\tfrac12\mathcal{L}_{\xi}\,\varphi'= {\lambda}\,h'$, and \eqref{E-k-mu-nullity}, we have
\begin{align*}
 R'_{X,Y}\,\xi &= R_{X,Y}\,\xi + (1-{\lambda}) \big\{3(\eta(X) \varphi^2Y-\eta(Y) \varphi^2X ) + 2(\eta(Y) hX -\eta(X) hY)\big\} \\
 & ={\lambda}^2(\kappa+3(1-{\lambda}))\big\{ \eta(X)\varphi'^2Y -\eta(Y)\varphi'^2X \big\}
 + {\lambda}(\mu+ 2(1-{\lambda}))\,\big\{ \eta(Y) h'X -\eta(X) h'Y\big\} .
\end{align*}
Taking $\kappa'= \lambda^2(\kappa+3(1-\lambda))$ and $\mu' =\lambda(\mu+2(1-\lambda))$, we see that $(\varphi',\xi,\eta,g')$ is a $(\kappa',\mu')$-structure.
\end{proof}


\begin{proof}[Proof of Theorem~\ref{Thm-lie}]
By \eqref{E-nS-10} and Example~\ref{Ea-dim3}$(ii)$, $Q|_{\ker\eta}={\lambda}^2\,I_{\,\ker\eta}$ for some real ${\lambda}>0$.
By Theorem~\ref{thm6},
$(\varphi',\xi,\eta,g')$ is a contact metric structure with
$\varphi'={\lambda}^{-1}\varphi$ and
$g'={\lambda} g+(1-{\lambda})\eta\otimes\eta$,
satisfying the $(\kappa',\mu')$-nullity condition with
$\kappa'= \kappa+3{\lambda}^2(1-{\lambda})$ and
$\mu'=\mu+2{\lambda}(1-{\lambda})$.
Applying \cite[Theorem~3]{blair1994}
we conclude that $M^3(\varphi',\xi,\eta,g')$ is locally isometric to any of the following unimodular Lie groups $G$ with a left-invariant~metric:

$SU(2)$ or $SO(3)$, when $c_2>0$ and $c_3>0$;

$SL(2,\mathbb{R})$, or, $O(1,2)$, when $c_2<0$, and $c_3>0$, or, $c_2<0$ and $c_3<0$;

$E(2)$, when $c_2=0$ and $\mu<2$, including a flat structure when $\mu=0$;

$E(1,1)$, when $c_3=0$ and $\mu>2$;
where
$c_2 = 1-\sqrt{1-\kappa}-\tfrac12\,\mu$
and
$c_3=1+\sqrt{1-\kappa}-\tfrac12\,\mu$.

\noindent
Here, $G$ is the Lie group with a 3-dimensional Lie algebra $\mathfrak{g}$,
having a basis $\{E_1,E_2,\xi\}$, and the following Lie brackets:
 $[E_1,E_2]= 2\,{\lambda}\,\xi$,
 $[E_2,\xi]=c_2 E_1$, and
 $[\xi,E_1]=c_3 E_2$,
where ${\lambda}>0,c_2,c_3$ are real
and $\mu = 2-(c_2+c_3)$.


Define an almost w.c.m. structure $(\varphi, Q,\xi,\eta,g)$ on $G$ as~follows:
\begin{align*}
 &\varphi E_1= {\lambda} E_2, \quad \varphi E_2=-{\lambda} E_1, \quad \varphi\,\xi=0,
 \quad QE_1 ={\lambda}^2 E_1, \quad QE_2 ={\lambda}^2 E_2, \quad Q\,\xi=\xi,
\end{align*}
$g$ is the left-invariant Riemannian metric such that $\{E_1,E_2,\xi\}$ is an orthonormal frame and $\eta$ is the dual $1$-form of $\xi$ with respect to $g$.
Note that the images of $E_1,E_2$ under $\widetilde h$ are
$
  \widetilde hE_1= \tfrac1{2{\lambda}}\,(c_3-c_2)\,E_1, \, \widetilde hE_2=-\tfrac1{2{\lambda}}\,(c_3-c_2)\,E_2,
 $
indicating ${\cal D}_+$ is spanned by $E_1$ and ${\cal D}_-$ by $E_2$.
%
The following relations are satisfied:
\begin{align*}
 &\nabla_{E_1} E_2 =\tfrac{1}{2}(2{\lambda}-c_2+c_3)\,{\xi},\quad
 \nabla_{E_1}\,\xi= \tfrac{1}{2}(c_2-c_3-2{\lambda})\,E_2, \quad
 \nabla_{E_2}\,\xi =\tfrac{1}{2}(2{\lambda}+c_2-c_3)\,E_1, \\
 & \nabla_{\xi} E_1 = \tfrac{1}{2}(c_2+c_3-2{\lambda})E_2, \quad
 \nabla_{\xi} E_2 = \tfrac{1}{2}(2{\lambda}-c_2-c_3)E_1, \quad \nabla_{E_1} E_1 = \nabla_{E_2}E_2 =\nabla_{\xi}\,\xi = 0.
\end{align*}
Using the above relations, we find the following components of the curvature tensor:
\begin{align*}
& R_{\,E_1, \, E_2}\,\xi=0, \quad
 R_{\,E_1,\, \xi}\,\xi=-\big\{1-\tfrac1{4{\lambda}^2}\,(c_3-c_2)^2\big\}\,\varphi^2 E_1 + \tfrac{1}{{\lambda}}(2{\lambda}- {c_2} - {c_3})\,hE_1, \\
&R_{\,E_2,\, \xi}\,\xi= -\big\{1-\tfrac1{4{\lambda}^2}(c_3-c_2)^2\big\} \varphi^2 E_2 + \tfrac{1}{{\lambda}}(2{\lambda}-{c_2} -{c_3})\,hE_2.
\end{align*}
Choosing
 $\kappa= 1-\tfrac1{4{\lambda}^2}(c_3-c_2)^2
 \le 1$
 and
$\mu=\tfrac{1}{\lambda}(2{\lambda}-c_2-c_3)$,
we get
the $(\kappa,\mu)$-nullity condition on $G$.
Moreover, \eqref{Eq-2assump} and \eqref{E-nS-10} are valid.
Since $\kappa<1$ for $c_2\ne c_3$, we have the desired~structure.
\end{proof}

\begin{example}\rm
We illustrate Theorem~\ref{T-04}$(ii)$  by representing a w.c.m. manifold satisfying the conditions \eqref{E-k-mu-nullity}
with $\kappa<1$, \eqref{Eq-2assump} and \eqref{E-nS-10}.
For $n=1$, this also illustrates Corollary~\ref{Thm-lie} since the manifold is locally isomorphic to $E(1,1)$.
Let $\mathfrak{g}$ be a $(2n+1)$-dimensional Lie algebra with a basis $\{E_1,\ldots,E_n,{\cal E}_1,\ldots,{\cal E}_n,\xi\}$
and the Lie bracket (satisfying the Jacobi identity) defined as follows for a real ${\lambda}>0$ (for ${\lambda}=1$, see
\cite{CMdiT-2007}):
\begin{align}\label{E-Lie_ex4}
\notag
 &[E_a,E_b]=0 \quad \text{for any } a,b\in\{1,\ldots,n\},
 \qquad [{\cal E}_a,{\cal E}_b]=0 \quad \text{for any } a\ne 2,\\
\notag
 &[{\cal E}_2,{\cal E}_b]=2\,{\lambda}\,{\cal E}_b\  \text{ for any } b\ne 2, \quad [E_1,{\cal E}_1]=2\,{\lambda}\,({\xi}-E_2), \quad [E_1,{\cal E}_b]=0\  \text{ for any } b\ge 2,\\
\notag
 &[E_a,{\cal E}_b]=2\,{\lambda}\,\delta_{ab}({\xi}-E_2) \quad \text{for any } a,b\ge 3,
\qquad [E_2,{\cal E}_b]=2\,{\lambda}\,E_b \text{ for any } b\ne 2,\\
\notag
 &[E_2,{\cal E}_2]=2\,{\lambda}\,{\xi}, \qquad
 [E_a,{\cal E}_1]=[E_a,{\cal E}_2]=0\ \text{ for } a\ge 3, \\
 &[\xi,E_b]=0, \qquad
 [\xi,{\cal E}_b]=2\,{\lambda}\,E_b
\quad \text{for any } b\in\{1,\ldots,n\}.
\end{align}
Define a weak almost contact structure
\((\varphi,Q,\xi,\eta)\)
on a Lie group $G$ of the Lie algebra $\mathfrak g$ by
\begin{align}\label{E-f_ex4}
 \varphi\,E_a={\lambda} {\cal E}_a, \quad
 \varphi\,{\cal E}_a=-{\lambda} E_a
 \quad (1\le a\le n),
 \quad \varphi\,\xi=0,
 \quad
 Q=\lambda^2 I+(1-\lambda^2)\,\eta\otimes\xi,
 \quad
 \eta=\xi^\flat.
\end{align}
Let $g$ be a left-invariant Riemannian metric on $G$ such that $\{E_1,\ldots,E_n,\,{\cal E}_1,\ldots,{\cal E}_n,\,\xi\}$ is an orthonormal frame.
Then $(\varphi,Q,\xi,\eta,g)$ satisfies \eqref{Eq-2assump} and \eqref{E-nS-10}.
Note that $n$-dimensional distributions ${\cal D}_+=Span\{E_1,\ldots,E_n\}$ and
 ${\cal D}_-=Span\{{\cal E}_1,\ldots,{\cal E}_n\}$ are
 involutive.

Using \eqref{E-Lie_ex4} and \eqref{E-f_ex4}, we have
the following non-zero derivatives of the frame:
\begin{align}\label{E-nabla_ex4}
\notag
&
\nabla_{E_1}\xi=-2\,{\lambda}\, {\cal E}_1, \quad 
\nabla_{E_2}\xi=-2\,{\lambda}\,{\cal E}_2, \quad
\nabla_{E_a}{\cal E}_b=2\,{\lambda}\delta_{ab}\,{\xi}, \\
&\nabla_{E_a}\xi=-2{\lambda}\,{\cal E}_a, \quad
\nabla_{{\cal E}_1}{\cal E}_1=2\,{\lambda}\,{\cal E}_2,\quad
\nabla_{{\cal E}_a}{\cal E}_2=-2{\lambda}\,{\cal E}_a,\quad
\nabla_{{\cal E}_a}{\cal E}_b=2{\lambda}\delta_{ab}\,{\cal E}_2,
\end{align}
for all $a,b\in\{3,\ldots,n\}$, from which we find the non-zero components of the curvature tensor:
\begin{align*}
& R_{E_1,\xi}\,\xi = 4{\lambda}^2 E_1, \ \quad
  R_{E_2,\xi}\,\xi = 4{\lambda}^2 E_2, \quad
  R_{E_a,\xi}\,\xi = 4{\lambda}^2 E_a, \\
& R_{{\cal E}_1,\xi}\,\xi = -4{\lambda}^2 {\cal E}_1, \quad
  R_{{\cal E}_2,\xi}\,\xi = -4{\lambda}^2 {\cal E}_2, \quad
  R_{{\cal E}_a,\xi}\,\xi = -4{\lambda}^2 {\cal E}_a.
\end{align*}
We also have $\widetilde h(E_a)=E_a$ and
$\widetilde h({\cal E}_a)=-{\cal E}_a$ for $a\in\{1,\ldots,n\}$.
Let us show that $d\eta(X,Y) =g(X,\varphi Y)$
is true for the orthonormal basis
$\{E_a, {\cal{E}}_a, \xi\}_{1 \le a \le n}$.
Obviously, $d \eta(X,Y)=0=g(X, \varphi Y)$ holds for $X=\xi$ or
$Y=\xi$. Using \eqref{E-Lie_ex4}, for $X,Y\in\ker\eta$, we have
$d\eta(E_a, E_b) =d\eta({\cal E}_a, {\cal E}_b)=0\ (1\le a,b\le n)$
and $d\eta(E_a, {\cal{E}}_b) = \lambda\,\delta_{ab}$.

Also, using the orthonormality of $\{E_1,\ldots,E_n,{\cal E}_1,\ldots,{\cal E}_n,\xi\}$ and the definition of $\varphi$, we have
\begin{align*}
    g(E_a, \varphi E_b)= g({\cal E}_a, \varphi  {\cal E}_b) =0, \quad g(E_a, \varphi {\cal E}_b) = -\lambda \, \delta_{ab}.
\end{align*}
Therefore, $(\varphi,Q,\xi,\eta,g)$ is a w.c.m. structure on $G$
satisfying the condition
\eqref{E-k-mu-nullity} with $\kappa=0$ and $\mu=4$.
 Moreover, ${\cal D}_+$ and  ${\cal D}_-$ are conjugate Legendrian foliations, which together with $\xi$ form a web of totally geodesic foliations.

This manifold does not split along the
distributions
${\cal D}_+$, ${\cal D}_- $, and $\{\xi\}$.
 Indeed,
${\cal D}_- \oplus \{\xi\}$
is not closed under the Lie bracket~\eqref{E-Lie_ex4}; moreover, by \eqref{E-nabla_ex4},
${\cal D}_+ \oplus \{\xi\}$ fails to be totally geodesic.
\end{example}

\begin{proof}[Proof of Theorem~\ref{T-04b}]
It was shown in Theorem~\ref{T-04}$(ii)$ that ${\cal D}_\pm$ are totally geodesic distributions.
For $X\in{\mathcal D}^-$, $\nabla_{[\xi,X]}\,\xi=-R_{\,\xi,X}\,\xi=0$; thus, $\widetilde h\,[\xi,X]=-[\xi,X]$, i.e., $[\xi,X]\in{\mathcal D}^-$. Combining this with the involutivity of ${\mathcal D}^-$, we find that ${\mathcal D}^-\oplus\{\xi\}$ is involutive.
Consider a foliated chart $U$ with coordinates $x_1,\ldots,x_{2n+1}$ such that $\{\partial_j\}_{j>n}$ is a local basis of ${\mathcal D}^-\oplus\{\xi\}$.
There exist smooth functions $c^j_a$ on $U$, $a\in\{1,\dots ,n\}$ and $ j\in \{n+1,\dots,2n+1\}$ such that
$X_a=\partial_a+\sum_{j>n}c^j_a \partial_j$
is a basis of ${\mathcal D}^+$.
Since $[\partial_j, X_a]\in{\mathcal D}^-\oplus\{\xi\}$ for $j>n$ and $a\le n$, we can write
$[\partial_j, X_a]=X+\sigma\,\xi$, where $X\in{\mathcal D}^-$ and $\sigma\in C^\infty(U)$.
By~$\nabla_{[\partial_j, X_a]}\,\xi = \nabla_X\,\xi   + \sigma\nabla_{\xi}\,\xi = 0$,
$\xi$ is parallel along
$[\partial_j, X_a]$.
Then, using \eqref{E-RXY-xi} and~\eqref{E-xi-n},
\begin{align*}
 0 =R_{\,\partial_j, X_b}\, \xi =
  \nabla_{\partial_j}\nabla_{X_b}\,\xi
 -\nabla_{X_b}\nabla_{\partial_j}\,\xi
 -\nabla_{[\partial_j, X_b]}\,\xi
 = -2\,\nabla_{\partial_j}(\varphi X_b),
\end{align*}
and $\nabla_{\varphi X_a}(\varphi X_b)=0$,
since $\varphi X_a\in{\mathcal D}^-$.
Thus, ${\mathcal D}^-\oplus\{\xi\}$ determines a flat, totally geodesic foliation.
Let's show that the distributions ${\cal D}_+$ and ${\cal D}_-\oplus [\xi]$ are parallel.
By \eqref{E-xi-n} for $X,Z\in {\mathcal D}^-$ and  $Y\in {\mathcal D}^+$, we find
\begin{align*}
 g(R_{X,Y}\,\xi, Z) &= g(\varphi \widetilde{h}\nabla_X Y,Z)- g(\varphi \widetilde{h}\nabla_Y X,Z)
 -2g((\nabla_X \varphi )Y,Z) -g(\varphi \nabla_X Y,Z) -g(\varphi \nabla_Y X,Z)  \notag \\
 & = 2 (\nabla_X {F})(Y,Z)-2g(\varphi \nabla_Y X,Z).
\end{align*}
This yields the equality
$(\nabla_X{F})(Y,Z)=g(\varphi\nabla_Y X,Z)$.
From \eqref{Eq-phi03} (where \eqref{E-nS-10} is used), we find \(g(\nabla_Y X, \varphi Z) = 0\) for $X,Z\in {\mathcal D}^-$ and  $Y\in {\mathcal D}^+$. Since \(\varphi\) is an isomorphism from \({\cal D}_-\) onto \({\cal D}_+\), this implies that \(\nabla_Y X\) is orthogonal to \({\cal D}_+\), thus,
\(\nabla_Y X\) lies in ${\cal D}_- \oplus \{\xi\}\), indicating the distribution is parallel.
Using the total geodesy of ${\cal D}_- \oplus \{\xi\}$, we have $g(\nabla_YX, Z) =-g(X, \nabla_Y Z)=0$ for $X\in{\mathcal D}^+$ and $Y,Z\in{\cal D}_-$, indicating ${\cal D}_+$ is a parallel distribution.
By the de Rham Decomposition Theorem and the fact that \({\cal D}_- \oplus \{\xi\}\) determines a flat totally geodesic foliation, \(M\) splits with one of the factors locally isometric to $\mathbb{R}^{n+1}$.
By Theorem~\ref{T-04}(ii), $({\cal D}_+,{\cal D}_-)$ is a bi-Legendrian structure.

\smallskip

In the rest of the proof, all calculations are done for $X,Y,Z,W\in{\cal D}_+$.
Using \eqref{E-xi-n}, we have
\[
 g((\nabla_X\,\varphi)Y,\xi)= -g((\nabla_X\,\varphi )\xi,Y)
 = g(\varphi\nabla_X\,\xi, Y) = 2\,g(\varphi X,\varphi Y)
 = 2\,g(QX,Y),
\]
hence, from \eqref{Eq-phi02}, we acquire
\begin{align}\label{E-phi2}
 (\nabla_X\,\varphi)Y=2\,g(QX,Y)\xi .
\end{align}
Since ${\cal D}_+$ is totally geodesic, we have $\nabla_W Z\in{\cal D}_+$.
Using this and \eqref{E-phi2}, we~get
\begin{align}\label{E-RXY1}
\notag
 & g(\nabla_X \nabla_Y \varphi Z,\varphi W)-g(\nabla_X \nabla_Y Z, W)
 = g(\nabla_X (2g(QY,Z)\xi +\varphi\nabla_Y Z),\varphi W)  \\
 &-g(\nabla_X \nabla_Y Z, W) = -4\,g(QY,Z)g(QX,W) +g(\nabla_X\nabla_Y Z, \widetilde QW) .
\end{align}
From \eqref{E-phi2} we have the following:
\begin{align}\label{E-RXY2}
\notag
 g(\nabla_{[X,Y]}\,\varphi Z, \varphi W)
 {-}g(\nabla_{[X,Y]} Z,W)
 &=2\,g(Q[X,Y],Z)\,g(\xi,\varphi W) {+} g(\varphi\nabla_{[X,Y]}Z,\varphi W) {-}g(\nabla_{[X,Y]} Z,W) \\
 &=g(\nabla_{[X,Y]}Z, Q W) -g(\nabla_{[X,Y]} Z,W)
  = g(\nabla_{[X,Y]} Z,\widetilde QW).
\end{align}
Comparing \eqref{E-RXY1} with \eqref{E-RXY2}
and using $Q=I+\widetilde Q$, we get
\begin{align*}
 g(R_{X,Y}\varphi Z,\varphi W) - g(R_{X,Y}\,Z, QW)
 = 4\, g(QX,Z)\,g(Y,QW) - 4\, g(X,QW)\,g(QY,Z) .
\end{align*}
As the manifold splits as a product of manifolds, 
$g(R_{X,Y} \varphi Z, \varphi W)=0$ for $X,Y,Z,W \in {\cal D}_+$, indicating that after replacing $QW$ by $W$, we have
\begin{align}\label{E-RXY3}
    g(R_{X,Y}\,Z,W) = 4\,\{g(QY,Z)\,g(X,W) -g(QX,Z)\,g(Y,W)\} \qquad
 (X,Y,Z,W\in{\cal D}_+).
\end{align}
Next, using the skew-symmetry of the Riemannian curvature tensor, we have
\[
 0=g(R_{X,Y}\,Z,Z)= 4\,\{g(QY,Z)\,g(X,Z) -g(QX,Z)\,g(Y,Z)\}.
\]
Taking $Z\in\mathcal{D}_+$ as a unit vector and setting $X=Z$ yields
\begin{align*}
    g(QY,Z)=g(QZ,Z)\,g(Y,Z)\qquad (Y\in\mathcal D_+).
\end{align*}
By symmetry of $Q$, we have
$QZ = g(QZ,Z)\,Z$.
Thus, every nonzero vector in $\mathcal{D}_+$ is an eigenvector of $Q$.
Hence, $Q|_{\mathcal{D}_+} =\lambda\, I|_{\mathcal{D}_+}$ for some smooth function $\lambda$ on $M$. Since $\varphi$ and $Q$ commute and $\varphi (\mathcal{D}_-)=\mathcal{D}_+$, for $V \in \mathcal{D}_ -$ we have
\[
\varphi QX =Q\varphi V = \lambda\, \varphi V \implies \varphi (QV -\lambda \, V)=0.
\]
Since, $\varphi$ is injective on $\ker\eta$, it follows that $QV =\lambda \, V$. From this and above, $Q|_{\ker\eta} =\lambda\, I|_{\ker\eta}$.
The~conditions \eqref{Eq-2assump} and \eqref{E-nS-10} force $\lambda $ to be a positive real constant. Consequently, \eqref{E-RXY3} simplifies~as
\[
g(R_{X,Y}\,Z,W) = 4\lambda\,\{g(Y,Z)\,g(X,W) -g(X,Z)\,g(Y,W)\} \qquad
 (X,Y,Z,W\in{\cal D}_+).
\]
Hence, the integral manifolds of $\mathcal{D}_+$ have constant sectional curvature $1/\sqrt{4\lambda}$ and the manifold is locally isometric to $\mathbb{R}^{n+1}\times \mathbb{S}^n(4\lambda)$.
\end{proof}

\begin{proof}[Proof of Corollary~\ref{cor4.1}]
Suppose, on the contrary, that a w.c.m. manifold of dimension $\ge 5$ is flat.
Since the manifold satisfies \eqref{E-RXY-xi}
along with \eqref{Eq-2assump} and \eqref{E-nS-10},
the equation~\eqref{E-RXY3} is valid
and reduces to
\begin{align}\label{E-cor1}
 g(Y,Z)\,g(X,W) = g(Y,W)\,g(X,Z) \quad (X,Y,Z,W\in {\cal D}_+).
\end{align}
Choose $Z=Y$ and $W=X$, then 
\eqref{E-cor1} yields
$g(X,Y)^2= g(X,X)\,g(Y,Y)$.
By the Cauchy-Schwarz inequality, this equality holds if and only if $X$ and $Y$ are linearly dependent. This contradicts our assumption
$\dim{\cal D}_+=n\ge 2$, that is, $X$ and $Y$ can be chosen to be linearly independent.
\end{proof}



\begin{example} \rm \label{Ea-dim3}
$(i)$
Let $M^{2n+1}(\varphi,Q,\xi,\eta,g)$  be a w.c.m. manifold with the curvature-related property \eqref{E-RXY-xi}, and
the tensor $Q|_{\,\ker\eta}$ be a scalar multiple of the identity map, i.e., $Q|_{\ker\eta}=\lambda\,I_{\,\ker\eta}$ for some $\lambda\in\mathbb{R}_{+}$. Then, the conditions \eqref{Eq-2assump} and \eqref{E-nS-10} are true, and Theorem~\ref{T-04b} can be applied. By \eqref{E-RXY3},
the integral manifolds of ${\cal D}_+$ have constant sectional curvature $4\lambda>0$,
and the manifold is locally isometric to
$\mathbb{R}^{n+1}\times\mathbb{S}^{n}(4\lambda)$.

$(ii)$ For $\dim M=3$, we illustrate Theorem~\ref{T-04b} with an example with fewer assumptions, which agrees with the result
of \cite[Section~7.5]{blair2010riemannian}.
Let a w.c.m. manifold \( M^3(\varphi, Q, \xi, \eta, g) \) satisfy conditions \eqref{E-RXY-xi} and \eqref{E-nS-10}.
Then $Q|_{\ker\eta}={\lambda}^2 I|_{\ker\eta}$ for some real $\lambda>0$,
see Example~\ref{Ea-dim3}$(ii)$.
Hence, the condition \eqref{Eq-2assump} is true. By Theorem~\ref{T-04b}, our manifold is locally isometric to $\mathbb{R}^2\times\mathbb{R}$; hence, the metric $g$ is flat. We~have the following Lie bracket relations:
\[
 [e_2,\xi]= 0,\quad
 [\xi, e_1]=2\,{\lambda}\,e_2,\quad
 [e_1,e_2]=2\,{\lambda}\,\xi,
\]
which determine locally the Lie group $E(2)$ with a left-invariant metric.
The~following equations are true:
 $\nabla_{e_1}\xi =-2\,{\lambda}\,e_2, \ \nabla_{e_2}\,\xi = 0$, and $\nabla_{\xi}\,\xi=0$.
Thus, $\xi$ rotates along the ${\cal D}_-$-curves with the speed equal to $2\,{\lambda}$.
\end{example}

\section{Auxiliary Results}
\label{sec:3}

Here, we prove a few propositions on the curvature and structure tensors of w.c.m. manifolds satisfying \eqref{Eq-2assump} and \eqref{E-nS-10}.
First, we generalize \cite[Proposition~7.1]{blair2010riemannian}.

\begin{Proposition}
For a w.c.m. manifold satisfying
\eqref{Eq-2assump}, the following is true:
\begin{align}\label{E-37}
 & QR_{\,\xi, X}\,\xi-\varphi R_{\,\xi,\,\varphi X}\,\xi
 = 2\,(h^2+Q\varphi^2) X.
\end{align}
\end{Proposition}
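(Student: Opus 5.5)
The plan is to compute the Jacobi operator $R_{\xi}$ directly from the formula \eqref{E-xi-n} of Proposition~\ref{C-2cond} and then combine the two terms on the left-hand side of \eqref{E-37} algebraically. Throughout I will use only the consequences of \eqref{Eq-2assump} recorded in Proposition~\ref{C-2cond}:
\begin{itemize}
\item $h$ is self-adjoint, anti-commutes with $\varphi$ and commutes with $Q$;
\item $\nabla_\xi\varphi=\nabla_\xi Q=0$.
\end{itemize}
Since $[Q,\varphi]=0$, the tensor $\widetilde h=Q^{-1}h$ also anti-commutes with $\varphi$, commutes with $Q$, and satisfies $\widetilde h\,\xi=0$.

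\smallskip
\emph{Step 1: a formula for $R_{\xi,X}\xi$.} Set $A:=\nabla\xi=-\varphi(I+\widetilde h)$. Since $\nabla_\xi\xi=0$ and $[\xi,X]=\nabla_\xi X-\nabla_X\xi$, we get
\[
R_{\xi,X}\,\xi=\nabla_\xi(AX)-A\nabla_\xi X+A^2X=(\nabla_\xi A)X+A^2X .
\]
Because $\nabla_\xi\varphi=0$, we have $\nabla_\xi A=-\varphi\,\nabla_\xi\widetilde h$. Using the anti-commutation $\widetilde h\varphi=-\varphi\widetilde h$, the square simplifies:
\[
A^2=\varphi^2+\varphi\widetilde h\varphi+\varphi^2\widetilde h+\varphi\widetilde h\varphi\widetilde h=\varphi^2-\varphi^2\widetilde h^{2}.
\]
Hence
\[
R_{\xi,X}\,\xi=-\varphi(\nabla_\xi\widetilde h)X+\varphi^2X-\varphi^2\widetilde h^{2}X .
\]

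\smallskip
\emph{Step 2: a formula for $\varphi R_{\xi,\varphi X}\xi$.} Differentiating $\widetilde h\varphi=-\varphi\widetilde h$ along $\xi$ and using $\nabla_\xi\varphi=0$ shows that $\nabla_\xi\widetilde h$ also anti-commutes with $\varphi$. From $\varphi^2=-Q+\eta\otimes\xi$ and $\varphi\xi=0$ we have $\varphi^3=-Q\varphi$ and $\varphi^4=-Q\varphi^2$. Substituting $\varphi X$ into Step~1 and applying $\varphi$ then gives
\[
\varphi R_{\xi,\varphi X}\,\xi=-Q\varphi(\nabla_\xi\widetilde h)X-Q\varphi^2X+Q\varphi^2\widetilde h^{2}X .
\]

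\smallskip
\emph{Step 3: combine.} Multiply Step~1 by $Q$ and subtract Step~2. The terms containing $\nabla_\xi\widetilde h$ cancel, and we are left with
\[
QR_{\xi,X}\,\xi-\varphi R_{\xi,\varphi X}\,\xi=2Q\varphi^2X-2Q\varphi^2\widetilde h^{2}X .
\]
Finally, since $\widetilde h\,\xi=0$ and $\widetilde h$ commutes with $Q$,
\[
-Q\varphi^2\widetilde h^{2}=Q(Q-\eta\otimes\xi)\widetilde h^{2}=Q^2\widetilde h^{2}=h^2 .
\]
This yields \eqref{E-37}.

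\smallskip
The main obstacle is the bookkeeping of commutation relations. One must check carefully that $\nabla_\xi\widetilde h$ anti-commutes with $\varphi$, which needs both $\nabla_\xi\varphi=0$ and $\nabla_\xi Q=0$ (the latter so that $\nabla_\xi\widetilde h=Q^{-1}\nabla_\xi h$ is compatible with $\varphi$). Without this the derivative terms would not cancel. One also has to handle the $\eta\otimes\xi$ parts of $\varphi^2$ consistently via $\widetilde h\,\xi=0$ and $\varphi\,\xi=0$.
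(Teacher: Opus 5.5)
Your proof is correct and follows essentially the paper's route: both compute $R_{\xi,X}\xi$ from $\nabla\xi=-\varphi-\varphi\widetilde h$ and $\nabla_\xi\xi=0$, write $QR_{\xi,X}\xi$ and $\varphi R_{\xi,\varphi X}\xi$ in terms of the $\xi$-derivative of $h$ (you use $\widetilde h$), and let those terms cancel; the paper goes through the intermediate identity \eqref{E-36} for $(\nabla_\xi h)X$, while you use the compact formula $R_{\xi,X}\xi=(\nabla_\xi A)X+A^2X$. Two small remarks: the anti-commutation of $\nabla_\xi\widetilde h$ with $\varphi$ follows from $\nabla_\xi\varphi=0$ alone, so $\nabla_\xi Q=0$ is not needed there, and the last step really uses $\eta\circ\widetilde h=0$, which follows from $\widetilde h\,\xi=0$ together with the self-adjointness of $\widetilde h$.
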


\begin{proof}
Let's compute $R_{\xi\, X}\,\xi$ using \eqref{E-xi-n}
(where \eqref{Eq-2assump} is used)
together with $\nabla_{\xi}\,\xi=0$:
\begin{align}\label{E-38}
 R_{\xi, X}\,\xi
 & = -\nabla_\xi ( \varphi X +\varphi\,Q^{-1}hX)
 +\varphi[\xi,X] + \varphi\,Q^{-1}h\,[\xi,X].
\end{align}
Applying $\varphi$ to both sides of \eqref{E-38} and then recalling Proposition~\ref{C-2cond} and \eqref{E-xi-n}, we get
\begin{align*}
 \varphi R_{\,\xi, X}\,\xi
 &=-\varphi\nabla_\xi(\varphi X +\varphi\,Q^{-1}hX) +\varphi^2[\xi,X]+\varphi^2\,Q^{-1}h[\xi,X]\notag\\
 & = (\nabla_\xi\,h)X +Q\nabla_X\,\xi +h\nabla_X \xi
 -\eta((\nabla_\xi\,h)X)\xi -\eta(\nabla_X\,\xi)\xi
 -\eta(Q^{-1}h\nabla_X\,\xi).
\end{align*}
Using \eqref{E-xi-n} and the equality $\eta((\nabla_\xi\,h)X)=0$ (obtained by taking the covariant derivative of $g(hX,\xi)=0$ along $\xi)$ in the above equation, we get the relation
\begin{align}\label{E-36}
 (\nabla_{\xi}\,h)X=\varphi R_{\,\xi, X}\,\xi
 +Q\varphi X -\varphi\,Q^{-1}h^2X .
\end{align}
From \eqref{E-36} we get the following:
\begin{align*}
 QR_{\,\xi, X}\,\xi =-\varphi (\nabla_\xi\,h)X +Q\varphi^2X +h^2X,\quad
 \varphi R_{\,\xi, \varphi X}\,\xi =
 -\varphi(\nabla_\xi\,h)X-Q\varphi^2X-h^2X.
\end{align*}
The~combination of these two relations yields  \eqref{E-37}.
\end{proof}

The following result generalizes \cite[Lemma~7.3]{blair2010riemannian} of Olszak (1979), and is used in Proposition~\ref{P-06}.

\begin{Proposition}
For a w.c.m. manifold satisfying \eqref{Eq-2assump} and \eqref{E-nS-10}, the following is true:
\begin{align}\label{E-04}
\notag
 (\nabla_X \varphi) Y  +(\nabla_{\varphi X}\varphi)\varphi Y
 =&\,2\,g( QX,QY)\xi -\eta(Y) [Q(Q+h)X+\eta(Y) \xi]  \\
 & -\tfrac{1}{2}\,g((Q+h)Y,\widetilde QX)\,\xi -\tfrac{1}{2}\big[\widetilde Q (\nabla_X\,\varphi)Y +(\nabla_{\widetilde QX}\,\varphi)Y \big] .
\end{align}
 \end{Proposition}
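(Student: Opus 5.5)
We follow Olszak's proof of \cite[Lemma~7.3]{blair2010riemannian}. The starting point is the identity \eqref{Eq-nabla-phi-N5}, which expresses $2\,g((\nabla_X\varphi)Y,Z)$ through $\mathcal{N}^{(1)}$, the terms $g(\varphi X,\varphi Y)\eta(Z)$, and $\mathcal{N}^{(5)}$. We write this identity twice: once for the triple $(X,Y,Z)$, and once for $(\varphi X,\varphi Y,Z)$. Then we add the two.

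\textbf{Step 1: the $\mathcal{N}^{(1)}$ terms.} As in the classical case, these terms combine into $g(\mathcal{N}^{(1)}(Y,Z),\varphi X)+g(\mathcal{N}^{(1)}(\varphi Y,Z),\varphi^2X)$. We express $\mathcal{N}^{(1)}$ through covariant derivatives of $\varphi$ via \eqref{4.NN}. We use $\varphi^2=-Q+\eta\otimes\xi$, the relation $[Q,\varphi]=0$ from \eqref{P-6}, and $\nabla\xi=-\varphi-\varphi\widetilde h$ from \eqref{E-xi-n}. The leading part then reproduces (a multiple of) the left-hand side $(\nabla_X\varphi)Y+(\nabla_{\varphi X}\varphi)\varphi Y$. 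The remaining pieces carry factors $\widetilde Q$. Inverting $\varphi^2$ produces exactly the correction $-\tfrac12[\widetilde Q(\nabla_X\varphi)Y+(\nabla_{\widetilde QX}\varphi)Y]$. The $\xi$-components of $\mathcal N^{(1)}$ come from $2\,d\eta\otimes\xi$ and from $\eta(\nabla\xi)$. Together with the explicit $\eta$-terms of \eqref{Eq-nabla-phi-N5}, they give $2\,g(QX,QY)\xi-\eta(Y)[Q(Q+h)X+\eta(Y)\xi]$. In particular, $h$ enters only through $\nabla\xi$, and $Q\widetilde h=h$.

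\textbf{Step 2: the $\mathcal{N}^{(5)}$ terms.} Rewrite every Lie bracket as a difference of covariant derivatives. Rewrite every derivative of $g(\cdot,\widetilde Q\,\cdot)$ as a sum of $\nabla\widetilde Q=\nabla Q$ terms and $\nabla\varphi$ terms. Then replace $\nabla Q$ by formula \eqref{E-nS-10b}, which is available because \eqref{Eq-2assump} and \eqref{E-nS-10} hold. After this substitution, the only surviving contributions of $\nabla Q$ are multiples of $\xi$ or of $\eta(Y)$. Using $\nabla\xi$ once more, these produce the term $-\tfrac12 g((Q+h)Y,\widetilde QX)\xi$. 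The $\nabla\varphi$ parts of $\mathcal N^{(5)}(X,Y,Z)+\mathcal N^{(5)}(\varphi X,\varphi Y,Z)$ feed into the $\widetilde Q(\nabla_X\varphi)Y$ and $(\nabla_{\widetilde QX}\varphi)Y$ terms of Step~1.

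\textbf{Step 3: checking the identity.} Compare components, separating $Z\in\ker\eta$ from $Z=\xi$. For $Z=\xi$, use $g((\nabla_X\varphi)Y,\xi)=-g(Y,(\nabla_X\varphi)\xi)=g(\varphi\nabla_X\xi,Y)$, which by \eqref{E-xi-n} is explicit. This handles all $\xi$-coefficients directly and is a consistency check on the constants. For the $\ker\eta$-part, use $\nabla_\xi\varphi=0$ and the fact that $h$ is self-adjoint and anticommutes with $\varphi$ (Proposition~\ref{C-2cond}).

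\textbf{Main obstacle.} The hard part is the bookkeeping in Step~2. The expansion of $\mathcal{N}^{(5)}$ must be organized so that the $\nabla\varphi$ terms with a $\widetilde Q$ in front collapse exactly to $-\tfrac12[\widetilde Q(\nabla_X\varphi)Y+(\nabla_{\widetilde QX}\varphi)Y]$. I would first use \eqref{dPhi} to cyclically rearrange the $(\nabla F)$ terms. I would also use self-adjointness of $\widetilde Q$ so that each pair of $\mathcal N^{(5)}$ terms can be grouped as $g((\nabla_\bullet\varphi)\bullet,\widetilde Q\bullet)$. Only after that grouping would I add the $\varphi$-rotated copy.
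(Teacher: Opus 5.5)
Your outline has a genuine gap. The steps that carry the content are asserted (``reproduces'', ``collapses exactly'') rather than derived, and the mechanisms you name do not work as stated.

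\textbf{Step 1 does not do what you claim.} Substituting \eqref{4.NN} into $g(\mathcal N^{(1)}(Y,Z),\varphi X)+g(\mathcal N^{(1)}(\varphi Y,Z),\varphi^2X)$ gives derivatives of $\varphi$ only along $Y,Z,\varphi Y,\varphi Z,\varphi^2Y$, never along $X$ or $\varphi X$. So this block cannot reproduce the left-hand side; that comes from the left sides of your two copies of \eqref{Eq-nabla-phi-N5}. In the contact case the block collapses to $-2\eta(Y)g(hX,Z)$ by the algebraic identity $\mathcal N^{(1)}(\varphi X,Y)+\varphi\mathcal N^{(1)}(X,Y)=2\eta(X)hY$. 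That identity, not \eqref{4.NN}, drives a Nijenhuis-type proof.

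For a w.c.m.\ structure the identity becomes $\mathcal N^{(1)}(\varphi X,Y)+\varphi\mathcal N^{(1)}(X,Y)=2\eta(X)hY+(\mathcal L_{\varphi Y}\widetilde Q)X-\varphi(\mathcal L_Y\widetilde Q)X$. The block then leaves remainders such as $-g(\mathcal N^{(1)}(\widetilde QY,Z),\varphi X)$ and $g((\nabla_{\widetilde Q\varphi Y}\varphi)Z-\widetilde Q(\nabla_{\varphi Y}\varphi)Z,\varphi X)$. These are $\widetilde Q$-weighted derivatives in the wrong directions, on top of the $\mathcal N^{(5)}$-terms.

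The right-hand side of \eqref{E-04} still contains $\nabla\varphi$, so the identity is implicit and has many equivalent forms. Reaching the specific correction $-\tfrac12[\widetilde Q(\nabla_X\varphi)Y+(\nabla_{\widetilde QX}\varphi)Y]$ requires converting all such terms into derivatives along $X$ and $\widetilde QX$. ``Inverting $\varphi^2$'' cannot do this: on $\ker\eta$ one has $\varphi^2=-Q$, and inverting it produces $Q^{-1}$-factors, not a $\tfrac12\widetilde Q$-term.

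\textbf{What is missing are the steps the paper uses.}
\begin{itemize}
\item Differentiate $\varphi^2=-Q+\eta\otimes\xi$ and insert \eqref{E-nS-10b} and \eqref{E-xi-n}. This gives $(\nabla_X\varphi)\varphi Y+\varphi(\nabla_X\varphi)Y=-\eta(Y)\varphi(Q+h)X-g(\varphi(Q+h)X,Y)\xi$, the vector form of \eqref{E-39}. It lets $\varphi$ pass through $\nabla_X\varphi$ and leads to \eqref{E-41}, where the defect $(\nabla_XF)(Y,\widetilde QZ)$ enters.
\item Add the cyclic identities \eqref{dPhi} for $(X,Y,Z)$, $(\varphi X,\varphi Y,Z)$ and $(\varphi X,Y,\varphi Z)$, and subtract the one for $(X,\varphi Y,\varphi Z)$. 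This yields \eqref{E-41a}.
\item The factor $\tfrac12$ comes from \eqref{E-Phi}, which says $[\widetilde Q,\nabla_Z\varphi]$ has only terms proportional to $\eta(\cdot)$ or to $\xi$. Together with \eqref{dPhi} for $(\widetilde QX,Y,Z)$, it turns $-(\nabla_YF)(Z,\widetilde QX)-(\nabla_ZF)(X,\widetilde QY)$ into $(\nabla_{\widetilde QX}F)(Y,Z)$ plus $\eta$-terms.
\end{itemize}

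\textbf{Step 3 does not replace this.} For $Z\in\ker\eta$ the component is the whole identity, and $\nabla_\xi\varphi=0$ plus the properties of $h$ do not settle it.

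\textbf{Your own $\xi$-check would fail on the term you say Step 1 produces.} The bracket must read $Q(Q+h)X+\eta(X)\xi$; compare $2\eta(X)\eta(Y)\eta(Z)$ in \eqref{E-41a} and Olszak's $-\eta(Y)(X+hX+\eta(X)\xi)$. With $\eta(Y)\xi$, take $X\in\ker\eta$ and $Y=\xi$: the left side is $-(Q+h)X$ while the right side is $-(Q+h)X-\xi$.
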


\begin{proof}
Using \eqref{E-nabla-phi-Phi}, \eqref{E-xi-n} and \eqref{E-nS-10b}, we have
\begin{align}
 (\nabla_X {F})(\varphi Y,Z)  - (\nabla_X {F})( Y,\varphi Z)
 & = -\, \eta(Y)g((Q+h)X,\varphi Z)
 -\eta(Z)g((Q+h)X,\varphi Y) \label{E-39}.
\end{align}
Replacing $Z$ by $\varphi Z$ in \eqref{E-39}, we acquire
\begin{align}
 (\nabla_X {F})(\varphi Y, \varphi Z) - (\nabla_X {F})(Y,\varphi^2 Z)
 = \eta(Y)[g((Q+h)X,QZ) -\eta(X) \eta(Z)].
 \label{E-40}
\end{align}
 A simple computation gives
 $(\nabla_X {F})( Y,\varphi^2 Z) =
 -(\nabla_X {F})(Y, QZ)
 -\eta(Z)[g((Q+h)X,Y)-\eta(X)\eta(Y)].$
Inserting this in \eqref{E-40}, gives
\begin{align}\label{E-41}
(\nabla_X {F})(\varphi Y, \varphi Z)
+ (\nabla_X {F})(Y, QZ)
= \eta(Y)g((Q+h)X,QZ) -\eta(Z)g((Q+h)X,Y) .
\end{align}
Using \eqref{dPhi}, we get
\begin{align*}
 &\quad (\nabla_X {F})(Y,Z)+ (\nabla_Y {F})(Z,X)+(\nabla_Z {F})(X,Y) +(\nabla_{\varphi X} {F})(\varphi Y,Z)+(\nabla_{\varphi Y}{F})(Z,\varphi X) \\
 & \quad+(\nabla_Z {F})(\varphi X, \varphi Y)
 +(\nabla_{\varphi X}{F})(Y, \varphi Z)
 +(\nabla_Y{F})(\varphi Z, \varphi X)
 +(\nabla_{\varphi Z}{F})(\varphi X,Y)\\
 & \quad -(\nabla_X {F})(\varphi Y, \varphi Z)
 -(\nabla_{\varphi Y} {F})(\varphi Z, X)-(\nabla_{ \varphi Z} {F})(X, \varphi Y) =0.
\end{align*}
Applying \eqref{E-39} and \eqref{E-41}, the above equality simplifies to
\begin{align} \label{E-41a}
 &2(\nabla_X {F})(Z,Y)+ 2 (\nabla_{\varphi X} {F})(Z,\varphi Y)  = 4\,{\eta}(Z) g(QX,QY)
 -[2\eta(Y)g((Q+h)X,QZ)\notag \\
 &+2\eta(X)\eta(Y) {\eta}(Z) -\eta(Y)g((Q+h)Z, \widetilde Q X)\notag
 +\eta(Z)g((Q+h)Y, \widetilde Q X)\\
 &-\eta(X)g((Q+h)Z, \widetilde Q Y)]
 +(\nabla_X {F})(Y,\widetilde QZ)-(\nabla_Y {F})(Z,\widetilde QX)-(\nabla_Z {F})(X,\widetilde QY).
\end{align}
Observe that using \eqref{E-nS-10}, and then the anti-symmetry of $\varphi$ and the equality $\nabla Q=\nabla \widetilde Q$, we have
\begin{align} \label{E-Phi}
 (\nabla_X {F})(Y,\widetilde QZ)=
 (\nabla_X {F})(\widetilde QY,Z)
 +\eta(Z)g((Q+h)X,\widetilde QY) +\eta(Y)g((Q+h)X,\widetilde QZ)
\end{align}
for all $X,Y,Z \in\mathfrak{X}_M$.
By the above and $d{F}=0$, the equation \eqref{E-41a} simplifies as
 \begin{align*}
 \notag
 & (\nabla_X {F})(Z,Y) + (\nabla_{\varphi X} {F})(Z,\varphi Y) =  2\,{\eta}(Z) g(QX,QY) - \big[\eta(Y)g((Q+h)X,QZ) \\
  & +\eta(X)\eta(Y) {\eta}(Z) +\tfrac12\,\eta(Z)\,g((Q+h)Y,\widetilde Q X)\big]
  +\tfrac12\big[(\nabla_X {F})(Y,\widetilde QZ)
  +(\nabla_{\widetilde Q X} {F})(Y,Z) \big] ,
 \end{align*}
that is equivalent to \eqref{E-04}.
\end{proof}

The subsequent result generalizes \cite[Lemma~7.4]{blair2010riemannian} of Olszak (1979), and is used in Proposition~\ref{prop-5.1}.

\begin{Proposition}\label{P-06}
For a w.c.m. manifold satisfying \eqref{Eq-2assump} and \eqref{E-nS-10}, the following is true:
\begin{align}\label{E-05b}
 \notag
 &
 g(R_{\xi, QX}Y, Z) -g(R_{\xi,X}\varphi Y, \varphi Z) +g(R_{\xi,\varphi X}Y, \varphi Z)
 +g(R_{\xi,\varphi X}\varphi Y, Z)  \\
 \notag
 &  = 2\,(\nabla_{\widetilde hX} {F})(Y,Z) +2\,{\eta}(Z)g((Q+h)X,QY)- 2\,{\eta}(Y)g((Q+h)X, QZ) \\
 \notag
 &  + \tfrac12\big[ 3(\nabla_{\widetilde Q\widetilde hX}{F})(Y,Z) +(\nabla_Y{F})(\widetilde hZ,\widetilde QX)
  +(\nabla_Z {F})(\widetilde QX,\widetilde hY) -(\nabla_{\widetilde QY} {F})(Z,\widetilde hX) \\
 &  -(\nabla_{\widetilde QY}{F})(\widetilde h Z,X) -(\nabla_{\widetilde QZ}{F})(\widetilde hX,Y) -(\nabla_{\widetilde QZ}{F})(X,\widetilde hY) \big].
\end{align}
\end{Proposition}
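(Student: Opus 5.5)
The plan is to follow the classical proof of Olszak's lemma (\cite[Lemma~7.4]{blair2010riemannian}). The idea is to write $R_{\xi,X}$ in terms of $\nabla\varphi$ and $\nabla(\varphi\widetilde h)$, and then feed in the identity \eqref{E-04} to eliminate the $\nabla\varphi$-terms.

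\emph{Step 1: an expression for $g(R_{\xi,X}Y,Z)$.} By Proposition~\ref{C-2cond} we have \eqref{E-xi-n}, $\nabla\xi=-\varphi-\varphi\widetilde h$, together with $\nabla_\xi\varphi=\nabla_\xi Q=0$ and $\nabla_\xi\xi=0$. We differentiate \eqref{E-xi-n} and use the symmetries of the curvature tensor to write $g(R_{Y,Z}\xi,X)=-g(R_{\xi,X}Y,Z)$. This gives
\[
 g(R_{\xi,X}Y,Z)=-(\nabla_Y F)(Z,X)+(\nabla_Z F)(Y,X)
 -g((\nabla_Y\varphi\widetilde h)Z-(\nabla_Z\varphi\widetilde h)Y,X).
\]
Next we use $dF=0$ in the form \eqref{dPhi}. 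It combines the first two terms into $(\nabla_X F)(Y,Z)$, up to sign, so the $\varphi$-part becomes $\pm(\nabla_X F)(Y,Z)$. The remaining part involves only $\nabla\widetilde h$ and $\nabla\varphi$ contracted with $\widetilde h$.

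\emph{Step 2: form the four-term combination.} We substitute the four arguments $(QX,Y,Z)$, $(X,\varphi Y,\varphi Z)$, $(\varphi X,Y,\varphi Z)$ and $(\varphi X,\varphi Y,Z)$ into the formula of Step 1 and add them with the signs on the left-hand side of \eqref{E-05b}. Writing $QX=X+\widetilde QX$ splits off the $\widetilde Q$-corrections; these are the terms carrying a factor $\tfrac12$ on the right-hand side. The main pure-$\varphi$ part then takes the form $(\nabla_X F)(Y,Z)+(\nabla_{\varphi X}F)(Y,\varphi Z)$ and its analogues. This is exactly what \eqref{E-04} evaluates, and \eqref{E-04} produces the terms $2\eta(Z)g((Q+h)X,QY)-2\eta(Y)g((Q+h)X,QZ)$.

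\emph{Step 3: the $\widetilde h$-terms.} The $\nabla\widetilde h$-contributions should cancel in pairs. For this we use that $h$ anti-commutes with $\varphi$ and commutes with $Q$, and that $h$ is self-adjoint. Whatever survives must be of the form $(\nabla_{\widetilde hX}F)(Y,Z)$ and the listed mixed terms. To move $\widetilde h$ and $\widetilde Q$ across the slots of $\nabla F$, we use \eqref{E-Phi} and \eqref{E-nS-10b}, both of which depend on \eqref{E-nS-10}. These produce the $\eta$-terms, which get absorbed.

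\emph{Main obstacle.} The hard part is the bookkeeping in Step 3. One has to track the $\widetilde Q$-corrections, which vanish in the contact case, and get the exact coefficients: the $3/2$ in front of $(\nabla_{\widetilde Q\widetilde hX}F)(Y,Z)$ and the six mixed terms. My first attempt would be to check the result on the special case $\widetilde Q=0$, where it should reduce to Olszak's formula. After that I would collect the $\widetilde Q$-linear terms separately, using \eqref{E-Phi} repeatedly to symmetrize them.
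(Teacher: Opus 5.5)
Your skeleton is the paper's: the analogue \eqref{E-05} of Olszak's formula, the four-term combination, and \eqref{E-04} for the pure $\nabla F$-block. Inserting $QX$ directly is a good choice. With $(\nabla_W\widetilde h)\varphi=-\varphi(\nabla_W\widetilde h)-\widetilde h(\nabla_W\varphi)-(\nabla_W\varphi)\widetilde h$, $\varphi^3=-Q\varphi$ and $[Q,\varphi]=0$, all $\nabla\widetilde h$-terms then cancel in pairs. The paper instead starts from $g(R_{\xi,X}Y,Z)$, is left with $-g(\varphi\widetilde QX,(\nabla_Y\widetilde h)Z-(\nabla_Z\widetilde h)Y)$ in \eqref{E-ABB}, and has to convert it back into $-g(R_{\xi,\widetilde QX}Y,Z)$ via \eqref{E-05}.

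With $QX$ the pure block is exactly $2\eta(Z)g(QX,QY)-2\eta(Y)g(QX,QZ)$: the split-off term $-(\nabla_{\widetilde QX}F)(Y,Z)$ cancels the $(\nabla_{\widetilde QX}F)(Y,Z)$ produced by \eqref{E-04}, \eqref{E-41}, \eqref{E-Phi}. So, contrary to your Step~2, the $\widetilde QX$-correction does not account for the $\tfrac12$-terms (its $\widetilde h$-free part cancels outright), and \eqref{E-04} on the pure block yields no $h$-terms.

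The real gap is Step~3. After the $\nabla\widetilde h$-cancellation, the $\widetilde h$-block still contains $g(X,(\nabla_{\varphi Y}\varphi)\widetilde h\varphi Z)$, $g(\widetilde h\varphi X,(\nabla_{\varphi Y}\varphi)Z)$ and their $Y\leftrightarrow Z$ counterparts. The tools you name, \eqref{E-Phi} and \eqref{E-nS-10b}, only move $\widetilde Q$ between slots; they cannot turn $\nabla_{\varphi Y}\varphi$ into $\nabla_Y\varphi$. You must apply \eqref{E-04} a second time, with $X$ replaced by $Y$ or $Z$; this is the paper's \eqref{e-phi}. This second application is what produces:
\begin{itemize}
\item the $h$-part $2\eta(Z)g(hX,QY)-2\eta(Y)g(hX,QZ)$ of the $\eta$-terms;
\item through its correction $P(Y,\cdot)=\tfrac12[\widetilde Q(\nabla_Y\varphi)+\nabla_{\widetilde QY}\varphi]$, the $\nabla_{\widetilde QY}F$ and $\nabla_{\widetilde QZ}F$ terms, together with the coefficients $\tfrac12$.
\end{itemize}
Then use \eqref{dPhi} in the form $g(W,(\nabla_Y\varphi)Z)-g(W,(\nabla_Z\varphi)Y)=(\nabla_WF)(Y,Z)$ with $W=2\widetilde hX+\widetilde Q\widetilde hX$. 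This gives $2(\nabla_{\widetilde hX}F)(Y,Z)+(\nabla_{\widetilde Q\widetilde hX}F)(Y,Z)$, and the extra $\tfrac12(\nabla_{\widetilde Q\widetilde hX}F)(Y,Z)$ from $P$ yields the $\tfrac32$. Without these steps, ``whatever survives must be of the form\dots'' is only an assertion. Checking the case $\widetilde Q=0$ cannot pin down the $\widetilde Q$-dependent coefficients.

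Step~1 also has a sign error. By pair symmetry $g(R_{Y,Z}\xi,X)=+g(R_{\xi,X}Y,Z)$, and $g((\nabla_Y\varphi)Z,X)=(\nabla_YF)(X,Z)$. So the $\varphi$-part is $-(\nabla_XF)(Y,Z)$, as in \eqref{E-05}, whereas your display gives $+(\nabla_XF)(Y,Z)$ next to a correctly signed $\varphi\widetilde h$-part. This is not a harmless ``up to sign''. The $Q$-part and the $h$-part of the $\eta$-terms come from different blocks, and with the wrong relative sign they would not combine into $(Q+h)$.
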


\begin{proof}
Using \eqref{E-xi-n},
proved based on the condition \eqref{Eq-2assump},
we find
\begin{align} \label{E-42}
 R_{Y,Z}\,\xi = -(\nabla_Y\,\varphi)Z
 + (\nabla_Z\,\varphi)Y
 - (\nabla_Y\,\varphi\widetilde h)Z
 + (\nabla_Z\,\varphi\widetilde h)Y.
\end{align}
Taking the scalar product of \eqref{E-42} with $X$ and simplifying using \eqref{dPhi}, we get the relation
\begin{align}\label{E-05}
 g(R_{\xi,X}Y ,Z)=-(\nabla_X {F})(Y,Z)-g(X,(\nabla_Y\,\varphi\widetilde h)Z)+ g(X, (\nabla_Z\,\varphi  \widetilde h)Y).
\end{align}
Using \eqref{E-05}, we have the following:
\begin{align}\label{E-42-tilde}
 g(R_{\xi,X}Y, Z) &-g(R_{\xi, X}\varphi Y, \varphi Z)
 + g(R_{\xi,\varphi X}Y, \varphi Z)
 +g(R_{\xi,\varphi X}\varphi Y, Z) \notag \\
 &= A(X,Y,Z) + \widetilde B(X,Y,Z) - \widetilde B(X,Z,Y) ,
\end{align}
where the operators $A$ and $\widetilde B$ are defined as follows:
\begin{align*}
 & A(X,Y,Z) = -(\nabla_X {F})(Y,Z) +(\nabla_X {F})(\varphi Y,\varphi Z) -(\nabla_{\varphi X} {F})(Y, \varphi Z)  -(\nabla_{\varphi X} {F})(\varphi Y,Z), \\
 & \widetilde B(X,Y,Z) = g(X, (\nabla_{\varphi Y}\varphi\widetilde h)\varphi Z) - g(X, (\nabla_Y\varphi\widetilde h)Z)
  - g(\varphi X, (\nabla_Y\varphi\widetilde h)\varphi Z) - g(\varphi X, (\nabla_{\varphi Y}\varphi\widetilde h)Z).
\end{align*}
Using \eqref{E-04}, \eqref{E-39}, \eqref{E-41} and \eqref{E-Phi}, the operator $A$ is simplified as
\begin{align}\label{eq-a1}
\notag
A(X,Y,Z) =& \,\big\{
 (\nabla_X {F})(Y,Z)+(\nabla_{X} {F})(fY,fZ)\big\}
+\big\{(\nabla_{fX} {F})(fY,Z)
-(\nabla_{fX} {F})(Y, fZ)\big\} \\
 =&\,2\,\eta(Z) g(QX, QY)
 -2\,\eta(Y) g(QX, QZ)
  +(\nabla_{\widetilde Q X} {F})(Y,Z) ,
\end{align}
and the operator $\widetilde B$ is read as
\begin{align*}
 &\widetilde B(X,Y,Z)
 = g(X, \varphi(\nabla_Y\,\widetilde Q\widetilde h)Z)
 -g( X,(\nabla_{Y}\,\varphi)\widetilde hZ)
+g(X, (\nabla_{\varphi Y}\,Q\widetilde h) Z)
+g(X,\widetilde h\varphi(\nabla_{\varphi Y}\,\varphi)Z) \\
&+g( X, \varphi(\nabla_{\varphi Y}\,\varphi) \widetilde h Z)
+g(Q X, \widetilde h(\nabla_Y\,\varphi) Z)
-g(Q X, (\nabla_{\varphi Y}\,\widetilde h) Z)
 +\eta(X)\,\eta((\nabla_{\varphi Y}\,\widetilde h) Z).
\end{align*}
 Using \eqref{E-nS-10} and \eqref{E-xi-n}, we get
\begin{align*}
 & g(X, (\nabla_{\varphi Y}\,Q \widetilde h) Z)
 = g(X, Q(\nabla_{\varphi Y}\,\widetilde h) Z)
 -\eta(X)g((Q-h)Y,\widetilde Q \widetilde hZ), \\
 & g(\varphi X, (\nabla_Y \widetilde Q \widetilde h)Z) = g(\varphi X, \widetilde Q( \nabla_Y \widetilde h)Z).
\end{align*}
These two equations allow us to simplify
$\widetilde B(X,Y,Z)$ as follows:
\begin{align*}
 &\widetilde B(X,Y,Z)
 =-g(X,(\nabla_{Y}\varphi)\widetilde hZ) -g(\widetilde Q \varphi X, (\nabla_Y\,\widetilde h)Z) +g(X, \widetilde h \varphi (\nabla_{\varphi Y}\,\varphi) Z) \\
 &\quad
 +g(Q X, \widetilde h(\nabla_Y \varphi) Z) +g(X,\varphi(\nabla_{\varphi Y}\,\varphi)\widetilde h Z) - \eta(X)g((Q{-}h)Y,\widetilde QhZ)
 +\eta(X)\,\eta((\nabla_{\varphi Y}\,\widetilde h) Z).
\end{align*}
In order to simplify $\widetilde B(X,Y,Z)$, we compute $\varphi(\nabla_{\varphi Y} \varphi)Z$ using \eqref{2.2}, \eqref{E-xi-n}, \eqref{E-04} and \eqref{E-nS-10b} as follows:
\begin{align}\label{e-phi}
 \varphi (\nabla_{\varphi Y} \varphi)Z
 & =(\nabla_{\varphi Y}\varphi^2)Z
 -(\nabla_{\varphi Y} \varphi)\varphi Z
 \notag \\
 & = -\big(g(QY,QZ)-\eta(Y)\eta(Z)\big){\xi} +2\,{\eta}(Z)\big(Q^2Y-\eta(Y)\xi\big)
 \notag \\
 & \quad -g(hY,QZ)\xi
 +(\nabla_Y\,\varphi)Z + P(Y,Z)
 +\tfrac{1}{2}\,g((Q+h)Z,\widetilde QY)\xi ,
\end{align}
where
$P(X,Y):=\frac{1}{2}\big[
\widetilde Q (\nabla_X\,\varphi)Y
 +(\nabla_{\widetilde QX}\,\varphi)Y \big]$.
Using
$\eta((\nabla_{\varphi Y} \widetilde h)Z)
=-g(\widetilde hZ, \nabla_{\varphi Y}\xi)
=g(Q\widetilde hZ, \widetilde hY-Y)$
and \eqref{e-phi},
we further simplify $\widetilde B(X,Y,Z)$ to
\begin{align}\label{Eq-b1}
\notag
 &\widetilde B(X,Y,Z) = 2\,g(\widetilde hX, (\nabla_{Y}\varphi) Z)
 +2\,{\eta}(Z)g( hX, QY) -2\,{\eta}(X)g(QY, hZ)+g(\widetilde{Q}X, \varphi (\nabla_Y \widetilde h)Z)  \\ & \quad
 +g(\widetilde{Q}X, \widetilde h(\nabla_{Y}\varphi) Z)
 +g(\widetilde h P(Y,Z)+P(Y, \widetilde hZ), X)
  + \tfrac{1}{2}\eta(X) g((Q+h)Y,\widetilde QZ)  .
\end{align}
We rewrite \eqref{E-42-tilde} using $d{F}=0$, \eqref{E-Phi}, \eqref{eq-a1}, and \eqref{Eq-b1} as follows:
\begin{align}\label{E-ABB}
 & A(X,Y,Z) + \widetilde B(X,Y,Z) - \widetilde B(X,Z,Y) \notag\\
 & \quad = 2\,(\nabla_{\widetilde hX}{F})(Y,Z)
 +2\,{\eta}(Z)g((Q+h)X,QY) -2\,{\eta}(Y)g((Q+h)X,QZ) \notag \\
 & \quad
 + (\nabla_{\widetilde QX} {F})(Y,Z) \notag + (\nabla_{\widetilde Q \widetilde hX} {F})(Y,Z)
 - g(\varphi\widetilde QX,\ (\nabla_Y\widetilde h)Z -(\nabla_Z\widetilde h)Y) \notag \\
 & \quad
 +g(\widetilde h P(Y,Z)+P(Y,\widetilde hZ), X)
 -g(\widetilde h P(Z,Y)+P(Z, \widetilde hY), X).
\end{align}
Using $d{F}=0$ and \eqref{E-Phi}, we simplify the last four terms in \eqref{E-ABB}:
\begin{align*}
2\,g(\widetilde h P(Y,Z)&+P(Y,\widetilde hZ), X)  -2\,g(\widetilde h P(Z,Y)+P(Z,\widetilde hY), X) \\
  & = (\nabla_{\widetilde Q \widetilde h X} {F})(Y,Z)
 + (\nabla_Y {F})(\widetilde QX,\widetilde hZ)
 -(\nabla_Z {F})(\widetilde QX,\widetilde hY)
 -(\nabla_{\widetilde QY} {F})(Z,\widetilde hX)
 \\
 & \quad  -(\nabla_{\widetilde QY} {F})(\widetilde hZ,X) +(\nabla_{\widetilde QZ} {F})(Y,\widetilde hX)
  +(\nabla_{\widetilde QZ}{F})(\widetilde hY,X).
\end{align*}
From \eqref{E-05} we have
\begin{align*}
 & g(\widetilde Q X, \varphi(\nabla_Z\widetilde h)Y -\varphi(\nabla_Y\widetilde h)Z)
 = g(\widetilde Q X, (\nabla_Z\,\varphi\widetilde h)Y -(\nabla_Y\,\varphi\widetilde h)Z)
 -g(\widetilde Q X, (\nabla_Z\,\varphi)\widetilde h Y -(\nabla_Y\,\varphi)\widetilde h Z) \\
 & = g(R_{\xi,\widetilde Q X}Y, Z)
 + (\nabla_{\widetilde QX} {F})(Y,Z)
 -(\nabla_Z{F})(\widetilde Q X,\widetilde h Y)
 +(\nabla_Y{F})(\widetilde Q X,\widetilde h Z).
 \end{align*}
Finally, using the two expressions above, we simplify \eqref{E-ABB} to the following:
\begin{align} \label{E-CURV}
 &A(X,Y,Z) + \widetilde B(X,Y,Z) - \widetilde B(X,Z,Y)\notag \\
 \notag
  =&\,2\,(\nabla_{\widetilde hX} {F})(Y,Z)
 +2\,{\eta}(Z)g((Q+h)X,QY) -2\,{\eta}(Y)g((Q+h)X,QZ) \notag \\
 \notag
 &+ \tfrac{1}{2}\big[ 3(\nabla_{\widetilde Q\widetilde hX}{F})(Y,Z)
 -(\nabla_{\widetilde QY}{F})(Z,\widetilde hX)
 -(\nabla_{\widetilde QY}{F})(\widetilde h Z, X) +(\nabla_Y {F})(\widetilde hZ, \widetilde QX)\\
 &+ (\nabla_Z {F})(\widetilde QX,\widetilde hY)
 -(\nabla_{\widetilde QZ}{F})(X,\widetilde hY) -(\nabla_{\widetilde QZ}{F})(\widetilde hX,Y) \big]
 -g(R_{\xi,\widetilde Q X}Y, Z).
\end{align}
The required equality \eqref{E-05b} follows from \eqref{E-CURV}.
\end{proof}

The following statement is used in the proofs of the main~theorems.

\begin{Proposition}\label{prop-5.1}
Let a w.c.m. manifold
\(M^{2n+1}(\varphi,Q,\xi,\eta,g)\) satisfy conditions
\eqref{E-k-mu-nullity} and \eqref{Eq-2assump}.
Then $\kappa\le1$.
(i)~If $\kappa<1$ then
$\ker\widetilde h=\{\xi\}$ and $\ker\eta={\cal D}_+ \oplus {\cal D}_-$,
where ${\cal D}_\pm$ are $n$-dimensional eigen-distributions of the eigenvalues $\pm\sqrt{1-\kappa}$
of~$\widetilde h$.
(ii)~If $\kappa<1$ and, in addition, \eqref{E-nS-10} is valid,
 then
\begin{align}\label{Eq-phi03}
 (\nabla_X {F})(Y,Z) &= 0
\quad (X,Z\in{\mathcal D}^-,\ Y\in {\mathcal D}^+),\\
 \label{Eq-phi02}
 (\nabla_X {F})(Y,Z) &=0 \quad
 (X,Y\in {\cal D}_+,\ Z \in\ker\eta), \\
 \label{Eq-phi001}
  (\nabla_X {F})(Y,Z) &=0 \quad
 (X,Y,Z\in {\cal D}_-).
 \end{align}
\end{Proposition}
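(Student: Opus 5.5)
The plan is to first extract an algebraic identity for $h$ from \eqref{E-37}, which gives $\kappa\le 1$ and part (i). Then I substitute the explicit curvature into \eqref{E-05b} and read off the vanishing of $\nabla F$ on the relevant triples for part (ii).

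\emph{Step 1: $\kappa\le1$ and part (i).} Put $Y=\xi$ in \eqref{E-k-mu-nullity}. For $X\in\ker\eta$ this gives $R_{\xi,X}\,\xi=-\kappa QX-\mu hX$, using $-\varphi^2=Q$ on $\ker\eta$. By Proposition~\ref{C-2cond}, $\varphi$ commutes with $Q$ and anticommutes with $h$, so
\[
\varphi Q\varphi X=-Q^2X,\qquad \varphi h\varphi X=hQX .
\]
Inserting these into \eqref{E-37}, the $\mu$-terms cancel and I expect to get $-2\kappa Q^2X=2(h^2-Q^2)X$. Hence
\[
h^2=(1-\kappa)\,Q^2 \ \text{ on } \ker\eta, \qquad \widetilde h^{\,2}=(1-\kappa)\,I_{\ker\eta},
\]
because $h$ and $Q$ commute. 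Since $h$ is self-adjoint, $h^2\ge0$, while $Q^2$ is positive definite; therefore $\kappa\le1$. Now let $\kappa<1$. The operator $\widetilde h=Q^{-1}h$ is a product of commuting self-adjoint operators, so it is self-adjoint. Its square is a positive multiple of the identity on $\ker\eta$, so $\ker\widetilde h=\{\xi\}$ and the eigenvalues on $\ker\eta$ are $\pm\sqrt{1-\kappa}$, with orthogonal eigendistributions ${\cal D}_\pm$. Since $\varphi$ anticommutes with $\widetilde h$ and is invertible on $\ker\eta$, it maps ${\cal D}_+$ isomorphically onto ${\cal D}_-$, so both have dimension $n$.

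\emph{Step 2: explicit curvature.} Using the pair symmetry $g(R_{\xi,X}Y,Z)=g(R_{Y,Z}\,\xi,X)$ and \eqref{E-k-mu-nullity}, I get
\[
R_{\xi,X}Y=\kappa\{g(QX,Y)\xi-\eta(Y)QX\}+\mu\{g(hX,Y)\xi-\eta(Y)hX\}.
\]
In particular $g(R_{\xi,X}Y,Z)=0$ whenever $X,Y,Z\in\ker\eta$. Thus for $X,Y,Z\in\ker\eta$ the left-hand side of \eqref{E-05b} vanishes, and its right-hand side gives a linear relation among values of $\nabla F$. Here \eqref{E-nS-10} is needed, since it is a hypothesis of Proposition~\ref{P-06}.

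\emph{Step 3: specializing to eigenvectors.} I choose $X,Y,Z$ in ${\cal D}_\pm$. Then $\widetilde h$ acts by the constant $\pm c$ with $c=\sqrt{1-\kappa}$. Also $\widetilde Q$ commutes with $\widetilde h$, so it preserves ${\cal D}_\pm$. Several supporting facts come into play here:
\begin{itemize}
\item identity \eqref{E-Phi} lets me move $\widetilde Q$ between slots;
\item identity \eqref{E-39} gives $(\nabla_XF)(\varphi Y,Z)=(\nabla_XF)(Y,\varphi Z)$ on $\ker\eta$, and $\varphi$ swaps ${\cal D}_\pm$;
\item $d F=0$ lets me permute arguments.
\end{itemize}
With these, each instance of \eqref{E-05b} should collapse to an equation of the form $\big(a\,I+b\,\widetilde Q\big)$ applied to a single value such as $(\nabla_XF)(Y,Z)$ equal to zero, with coefficients $a,b$ depending only on the signs and on $c$. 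The cases of the three claims are: $X,Y\in{\cal D}_+$ with $Z$ in either summand of $\ker\eta$, for \eqref{Eq-phi02}; $X,Y,Z\in{\cal D}_-$, for \eqref{Eq-phi001}; and $X,Z\in{\cal D}_-$ with $Y\in{\cal D}_+$, for \eqref{Eq-phi03}. In the mixed case I would combine the equations obtained for two sign configurations.

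\emph{Main obstacle.} The hard step is the bookkeeping of the $\widetilde Q$-terms in \eqref{E-05b}. I have to show that the resulting operator $aI+b\widetilde Q$, which combines with $Q=I+\widetilde Q$, is positive definite, and hence injective, precisely in the listed sign configurations. My first attempt would be to rewrite everything in terms of $Q$ and use $Q>0$. This asymmetry between the signs should also explain why only ${\cal D}_+$ appears in \eqref{Eq-phi02} but all three slots lie in ${\cal D}_-$ in \eqref{Eq-phi001}.
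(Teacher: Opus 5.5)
Your proposal follows the paper's own proof. Part (i) is the same comparison of the $(\kappa,\mu)$-condition with \eqref{E-37}, giving $\widetilde h^{\,2}=(1-\kappa)I$ on $\ker\eta$. Part (ii) uses the vanishing left side of \eqref{E-05b}, specialized to eigenvectors and reduced with $dF=0$ and \eqref{E-Phi} to $(\nabla_XF)(QY,Z)=0$; so your operator $aI+b\widetilde Q$ is just $Q$, and positivity is automatic. As you anticipated, the mixed case needs a second configuration, $X\in{\cal D}_+$, $Y,Z\in{\cal D}_-$, which vanishes outright. One correction to your closing heuristic: reversing all signs only negates the specialized relation, so the argument gives vanishing in every sign configuration, and the apparent asymmetry between \eqref{Eq-phi02} and \eqref{Eq-phi001} is only in how the statement is phrased.
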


\begin{proof}
$(i)$ From the $(\kappa,\mu)$-nullity condition
\eqref{E-k-mu-nullity}, we get
 $QR_{\,\xi, X}\, \xi=\kappa\,Q\varphi^2X-\mu\, QhX$
 and  $\varphi R_{\,\xi,\,\varphi X}\,\xi =
-\kappa\,Q\,\varphi^2 X -\mu\,QhX$.
Combining these two equations, we acquire
\begin{align}\label{E-31a}
 QR_{\,\xi, X}\,\xi
 -\varphi R_{\,\xi,\,\varphi X}\,\xi
 = 2\kappa\,Q\varphi^2 X.
\end{align}
Comparing \eqref{E-31a} with \eqref{E-37}
where \eqref{Eq-2assump} is used, we get
$h^2X =(1-\kappa)\big\{Q^2X -\eta(X) \xi\big\}$,
hence
$\widetilde h^{\,2}X=(1-\kappa)X\ (X\in\ker\eta)$.

Since $\widetilde h=Q^{-1}h$ is self-adjoint,
$\widetilde h^{\,2}$ is non-negative definite; hence
$\kappa\le 1$ holds. Since $\widetilde{h}\,\xi=0$ is true, we have $\ker\widetilde h=\{\xi\}$ and $\ker\eta={\cal D}_+ \oplus {\cal D}_-$,
where ${\cal D}_\pm+$ are the eigen-distributions for the eigenvalues $\pm\sqrt{1-\kappa}$.
Using $\varphi h = -h \varphi$ and $\varphi Q = Q \varphi$, we obtain
$\widetilde{h}(\varphi X)=-\sqrt{1-\kappa}\,\varphi X\ (X\in{\cal D}_+)$, which ensures
$\varphi X\in{\cal D}_-$. Thus, $\varphi$ maps ${\cal D}_+$ onto ${\cal D}_-$ and vice versa, so $\dim {\cal D}_+ = \dim {\cal D}_- = n$. Observe that $g(\varphi X, \varphi Y)=0$ for $X \in {\cal D}_+$ and $Y \in {\cal D}_-$, which gives $g(QX,Y)=0$. Thus,
$Q({\cal D}_+)={\cal D}_+$ and $Q({\cal D}_-)={\cal D}_-$,
similarly, $\widetilde Q({\cal D}_+)={\cal D}_+$ and $\widetilde Q({\cal D}_-)={\cal D}_-$.

\smallskip

$(ii)$ From \eqref{E-05b} (where \eqref{E-nS-10} is used) with $X,Y,Z\in\ker\eta$ and using \eqref{E-k-mu-nullity}, we obtain the following:
\begin{align}\label{Eq-phi01}
  & 4(\nabla_{\widetilde hX} {F})(Y,Z) = (\nabla_{\widetilde QY} {F})(Z,\widetilde hX) {+}(\nabla_{\widetilde QY}{F})(\widetilde h Z,X)
   {+} (\nabla_{\widetilde QZ} {F})(\widetilde hX,Y) {+} (\nabla_{\widetilde QZ} {F})(X,\widetilde hY) \notag \\
 &-3(\nabla_{\widetilde Q\widetilde hX} {F})(Y,Z)
 -(\nabla_Y {F})(\widetilde hZ, \widetilde QX)  -(\nabla_Z {F})(\widetilde QX,\widetilde hY) \quad (X,Y,Z \in \ker\eta).
\end{align}
Taking $X,Z\in{\mathcal D}^-$ and $Y\in{\mathcal D}^+$ in \eqref{Eq-phi01}, and applying \eqref{dPhi} along with the fact $\widetilde h V= \pm \sqrt{1-\kappa}\,V$ depending on whether $V\in{\cal D}_-$ or $V\in{\cal D}_+$, we find
\begin{align}\label{Eq-N}
 2\, (\nabla_X {F})(Y,Z) =-(\nabla_{\widetilde Q X} {F})(Y,Z) -(\nabla_{X}{F}) (\widetilde QY,Z).
\end{align}
Cyclically changing $X,Y,Z$ in \eqref{Eq-N}
and applying \eqref{dPhi}, we acquire
\begin{align*}
 0&= -2\,d{F}(X,Y,Z)= - 2\, [(\nabla_X {F})(Y,Z)+(\nabla_Y {F})(Z,X)+ (\nabla_Z {F})(X,Y)] \\
 & = (\nabla_{\widetilde Q X} {F})(Y,Z)
 + (\nabla_{X}{F}) (\widetilde QY,Z)
 + (\nabla_{\widetilde Q Y} {F})(Z,X)
 + (\nabla_{Y}{F}) (\widetilde QZ,X) \\
 & \quad + (\nabla_{\widetilde Q Z} {F})(X,Y)
 + (\nabla_{Z}{F}) (\widetilde QX,Y)
 = (\nabla_{\widetilde Q X} {F})(Y,Z)
 - (\nabla_{X}{F}) (\widetilde QY,Z).
\end{align*}
Using this in \eqref{Eq-N}, gives $(\nabla_X\,{F})(QY,Z)=0$, which implies \eqref{Eq-phi03} as $Q$ is a bijection on ${\cal D}_+$.
Taking $X,Y,Z\in {\mathcal D}^+ $(or in ${\cal D}_-$)  in \eqref{Eq-phi01}, we find
\begin{align*}
 4(\nabla_{X}{F})(Y,Z) &= -3(\nabla_{\widetilde QX}{F})(Y,Z)
 +2  (\nabla_{\widetilde QY}{F})(Z,X) +2(\nabla_{\widetilde QZ}{F})(X,Y) \\
 &\quad -(\nabla_{Z}{F})(\widetilde Q X,Y) -(\nabla_{Y}{F})(Z,\widetilde Q X).
\end{align*}
Using this and \eqref{dPhi}, we get $(\nabla_{X}{F})(QY,Z)=0$, which indicates that $(\nabla_{X}{F})(Y,Z)=0$ for $X,Y,Z\in {\mathcal D}^+$(or in ${\cal D}_-$), that is, \eqref{Eq-phi02} is true. In a similar manner, we get \eqref{Eq-phi001}.
\end{proof}

\section*{Conclusion}

This work demonstrates that several fundamental rigidity phenomena of contact metric geometry continue to hold in the broader setting of weak contact metric manifolds. In particular, by employing the partial Ricci flow, we obtain convergence of w.c.m. structures to Sasakian structures under suitable assumptions, find conditions under which a w.c.m.~mani\-fold admits a bi-Legendrian structure, and establish corresponding classification results.
This work contributes to research aimed at characterizing contact structures via curvature conditions, as well as to the broader program of studying the evolution of geometric structures under curvature-driven dynamics -- a topic of increasing relevance in geometric analysis and mathematical physics.
We expect that the weak contact metric structures studied here will find further applications in CR-manifolds, complex contact manifolds \cite{blair2010riemannian}, and non-symmetric Riemannian geometry \cite{RZ-1}.
It would be interesting to investigate whether these rigidity and convergence properties remain valid under weaker hypotheses and to explore their implications in models of
non-symmetric theories of gravity.

\end{document}